\theoremstyle{plain}
\newtheorem{theorem}{Theorem}[section]
\newtheorem{lemma}{Lemma}[section]
\newtheorem{corollary}{Corollary}[section]
\newtheorem{proposition}{Proposition}[section]
\theoremstyle{definition}
\newtheorem{definition}{Definition}[section]
\newtheorem{assumption}{Assumption}[section]
\theoremstyle{remark}
\newtheorem{remark}{Remark}[section]
\newtheorem{example}{Example}[section]
\numberwithin{equation}{section}
\title[Shadow wave tracking and pressureless gas]
{Shadow wave tracking procedure and initial data 
problem for pressureless gas model}
\author{Marko Nedeljkov and Sanja Ru\v{z}i\v{c}i\'{c}}
\address{Department of Mathematics and Informatics, University of Novi Sad,
Trg D.\ Obradovi\'{c}a 4, 21000 Novi Sad, Serbia}
\email{marko@dmi.uns.ac.rs, sanja.ruzicic@dmi.uns.ac.rs}
\subjclass{35L65, 35L67, 35Q35}
\keywords{shadow waves, initial data problem, wave front tracking,
 pressureless gas}
\begin{document}

\begin{abstract} In this paper the new procedure for a construction of
an approximated solution to initial data problem for one-dimensional
pressureless gas dynamics system is introduced. The procedure is based on
solving the Riemann problems and tracking singular wave interactions.
For that system the new problem with initial data containing
Dirac delta function is solved whenever two waves interact. Use of the shadow waves as singular
solutions to such problems enables us to easily solve the interaction
problems. That permits us to make a simple extension of the
well known Wave Front Tracking algorithm. A non-standard part of the new
algorithm is dealing with delta functions as a part of a solution. In the
final part of the paper we show that the approximated solution has
a subsequence converging to a signed Radon measure. 
\end{abstract}

\maketitle

\section{Introduction}

In the last few decades, a lot of conservation law systems with 
non-classical, unbounded weak solutions were analyzed. 
One can find a lot of examples in the references at the end of the paper. 
Almost all these solutions contain the Dirac delta function that is 
not suitable for nonlinear operations. That is a source of big
problems in solving some conservation law systems. 
There are several methods for dealing with that, and 
some of them can be found in the references below. Riemann problem 
is almost fully understood for these systems, so a natural next step is
to look for a solution to a general initial data problem. Because 
of that we will use shadow waves defined in \cite{mn2010}. 
Shadow wave solutions (SDW) are represented by nets of
piecewise constant functions with respect to the time variable 
depending on a small parameter $\varepsilon>0$ tending to zero. 
A shadow wave approximates a significant number
of different types of singular solutions that differ from classical
solutions by containing the Dirac delta function supported by a shock curve. 
Their use permits one to easily find a solution 
to the interaction problem and that will be 
of the greatest importance for the construction of a solution here.
To demonstrate these ideas, we will use the well known 
pressureless gas dynamics system 
\begin{equation}\label{chRS}
\begin{split}
\partial_t \rho +\partial_x(\rho u) & = 0\\
\partial_t(\rho u) + \partial_x (\rho u^2) & = 0
\end{split}
\end{equation}
that describes an evolution of density $\rho\geq 0$ and velocity $u$ of a
fluid. The equations in (\ref{chRS}) express conservation of mass and
linear momentum in an absence of pressure. That means that changes in
internal energy manifested through temperature or specific entropy are
neglected. The above system is sometimes called the sticky particle model.
That name comes from the fact that colliding particles fuse into a single
particle that combines their masses and moves with a velocity that conserves
the total linear momentum (see \cite{b_g} or \cite{CD} for example). For
example, it models one-dimensional isentropic flow in the Eulerian description
of a thermoelastic fluid in a duct. System (\ref{chRS}) is weakly
hyperbolic with the double eigenvalue $\lambda_i(\rho,u)=u$, $i=1,2$
with both fields being linearly degenerate. It allows a mass concentration
that leads to singular, unbounded solutions containing 
the Dirac delta function. 
The system attracts great attention in the literature. Riemann
problems for the pressureless gas dynamics system with a source are analyzed
in \cite{d_mn, Shen}, two--dimensional case can be found in \cite{SZ}, while
the system with added energy conservation law is investigated in
\cite{mn2010}. Besides it, there are a significant number of
conservation laws admitting unbounded solutions. More about their origin
and history one can find in \cite{LF, Keyfitz, mn2004}. Unbounded
solutions for weakly hyperbolic systems like (\ref{chRS}) were firstly found
and they are called delta shocks. Some other interesting solutions
called singular shocks appearing in some strictly hyperbolic systems (\cite{KK}), 
or in chromatography system that changes type (\cite{Mazz, meina-ch}). It is
known that a Riemann problem for (\ref{chRS}) with the left and right
initial states $(\rho_l,u_l)$ and $(\rho_r,u_r)$ has a self-similar,
classical entropy solution that consists of two contact discontinuities
connected with the vacuum state if $u_l<u_r$, or 
a single contact discontinuity if $u_l=u_r$. If $u_l>u_r$, there
exists a non-classical solution containing the delta function. 

The authors in \cite{WRS} constructed a global weak solution to the initial
data problem for (\ref{chRS}) by using generalized variational method.
Almost at the same time, the existence of a weak solution 
to the same problem was proved in \cite{b_g}. 
Uniqueness is proved in \cite{HW} for initial data
belonging to the space of Radon measures by using methods from \cite{WRS}. 
In \cite{boudin}, the author proved existence of a solution
to classical initial data problem for (\ref{chRS}) by
using viscosity approximation. The solution is understood in the sense of
duality that is defined in \cite{BJ}. Global existence of a 
measure--theoretic solution where $\rho$ belongs to 
Borel measures space and $u$ is
square integrable with respect to $\rho$ was proved in \cite{C_S_W} by
using the theory of first-order differential inclusions in the space of
monotone transport maps introduced in \cite{Natile_Savare}. The authors in
\cite{Nguyen_Tudorascu} were using the usual entropy solution to a scalar
conservation law to obtain a global solution, while initial data
could contain a Borel measure.  Methods used in all the papers cited above 
are specific for the pressureless gas (sticky particles) model. 
Our idea is to use a procedure of shadow wave tracking because 
it can be adapted to some other system possessing unbounded
solutions. Model (\ref{chRS}) should be understood as a starting 
point for using this method in a general case. 
The logical and straightforward generalization is $3\times 3$
pressureless gas dynamic system 
\begin{equation} \label{PGD}
\begin{split}
\partial_{t}\rho+\partial_{x}(\rho u) &= 0 \\
\partial_{t}(\rho u)+\partial_{x}(\rho u^{2}) &= 0 \\
\partial_{t}(\rho u^{2}/2+\rho e)+\partial_{x}((\rho u^{2}/2+\rho e)u)
& = 0
\end{split}
\end{equation}
described in \cite{mn2010}. That system has the similar structure as 
(\ref{chRS}), and we will note small changes in the procedure.

The main idea for the approximate solution construction procedure comes from
the well known Wave Front Tracking (WFT) algorithm 
(see \cite{AB, ABWFT, H_R,Risebro}). The procedure starts with 
an approximation of initial data by piecewise constant 
functions and tracking the waves and monitoring their interactions 
later on.  The shadow waves are approximations of delta
shock solution and due to their construction one can use an algorithm
similar to the WFT one. One of the main difficulties in 
the WFT algorithm was the fact that the number of wave fronts may 
approach infinity within the finite time for $n\times n$ systems, with $n>2$. 
Here, we are dealing with $2\times 2$ system in which this problem 
does not occur. In this particular case, a number of waves decreases
after each interaction as one can see below. But, the resulting wave front 
here is not necessarily a straight line (i.e.\ the wave propagates 
with a non--constant speed), which is not a case with WFT algorithm
for BV solutions.  That is a consequence of the fact that a
shadow wave interaction with some wave produces a new shadow wave with
non-constant speed in general.  So, we have to deal with the additional
problem of analyzing such wave front curves.

As we already mentioned, the procedure for finding an approximate solution to
the initial data problem presented in this paper can be used for general 1D
conservation law systems. It is only required that they admit 
a unique solution to the corresponding Riemann problem 
consisting of elementary and shadow waves combinations. 
That is the first advantage of our solution construction compared to the
methods previously discussed that depend on a particular form of
conservation law system. There are some peculiarities 
in the pressureless gas model. The absence of rarefaction waves 
makes the procedure simpler. But, on the other side, the
appearance of vacuum in the approximate solution was the main source of
difficulties in the approximate solution construction.
Also, that makes a limiting process harder to follow since there are 
no vacuum areas in a local smooth solution to the system. 
The ultimate step would be to generalize the procedure for (\ref{chRS}) 
to obtain a general algorithm for solving a wide class of 
conservation law systems admitting unbounded solutions. 
Note that there is an example of shadow wave interactions that  
cannot be handled in the way used here, as proved in  
\cite{mn2014} for the model of Chaplygin gas.

The second advantage of the procedure is that it can be adapted for a
numerical implementation. A complete verification is left for future
research since the procedure in the paper requires some additional work 
to obtain relevant numerical results. For example, one has to develop
an efficient procedure that will provide a good approximation for the next
interaction point, especially when an interaction order between waves is
not known in advance. The use of the exact values demands a
huge computation effort and one cannot control an approximation error.

The first main result in this paper is the construction of a global admissible
approximate solution to the initial data problem for (\ref{chRS}). The
initial data are bounded piecewise $\mathcal{C}^{1}$ functions with a finite
number of jumps. The second one is the existence of subsequence 
converging in the space of signed Radon measures.
Moreover, there exists a subsequence converging to
a measure that consists of classical solutions connected by delta
function at least for a small time interval.	
In that time interval, the approximate solution can be obtained
uniquely using a kind of well--balanced partitions.

Note that the Lax entropy condition (a convex entropy--entropy flux pair) 
does not suffices to single out all non-physical solutions for (\ref{chRS}) 
as proved in \cite{WRS}. 
One has to use overcompressibility to extract a proper solution.
It means that all characteristics run into a shock front 
(especially, $u_l>u_r$ for system (\ref{chRS})).
Concerning other systems admitting singular solutions, there are some
interesting facts about relations between these two admissibility conditions.
As it was shown in \cite{mn2010} for (\ref{chRS}) with
the energy conservation equation added, they  
are equivalent for all semi--convex entropies $\eta$.
But the overcompressibility condition can be 
weaker as shown in \cite{mn_sr}. When dealing
with isentropic gas dynamics systems, the authors often use the energy
inequality, derived from energy conservation law as an additional criterion
for admissibility check (see \cite{dl_s} for example).
The energy density for pressureless gas is $E=\frac{1}{2}\rho u^2$.
Here we present a
simple analysis of energy propagation but we did not use it for choosing a
proper solution.

The paper is organized as follows. Section \ref{sec:statement} contains a
statement of the problem as well as an overview of all waves which appear as
a part of a solution to the Riemann problem. Section
\ref{sec:interactions} is devoted to an analysis of shadow wave interactions.
We describe all interactions between two or more waves which may occur at some
time in Section \ref{sec:algorithm}.
After that, a detailed presentation of procedure which provides
a scheme for constructing the admissible approximate 
solution to the initial value problem is given. The procedure 
is based on the approximation of initial data
and tracking interactions between the waves which are 
obtained as solutions to the Riemann problems. 
A relation between each pair of consecutive states obtained by the
initial data approximation contains all information needed for 
the construction of a solution after each interaction point. 
Details depend on monotonicity of the initial functions $u(x)$ and $\rho(x)$. 
Section \ref{sec:analysis} contains proofs of admissible 
approximated solution existence to the initial data problem when 
the function $u(x)$ is monotone. That result is then extended 
for $u(x)$ having a finite number of extremes.
In Section \ref{sec:energy} we briefly discuss entropy changes across a shadow
wave and after the interactions and we prove that the total entropy decreases after the interaction between two shadow waves. 
The remainder of the paper is devoted to
proving that solution converges in the space of measures and that a limit is unique in some sense and at least for some time.

\section{Riemann problems}\label{sec:statement}

In the rest of this paper we will write
$a_\varepsilon\sim b_\varepsilon$ if there exists $A>0$
such that $\lim_{\varepsilon\to 0} \frac{a_\varepsilon}{b_\varepsilon}=A$.
The sign ``$\approx$'' will denote the distributional limit
as $\varepsilon\to 0$. Landau symbols $\mathcal{O}(\cdot)$
and $o(\cdot)$ will be used under the assumption $\varepsilon\to 0$
which will be often omitted after their use.

Suppose that $\rho(x)>0$ and $u(x)$ are in $C_b^1\big([R,\infty)\big)$.
Let $\rho_0,u_0\in\mathbb{R}$, $\rho_0>0$. Here, $C_b^1$ 
denotes a space of bounded functions with a bounded derivative. 
The initial data for (\ref{chRS}) are  
\begin{equation}\label{ri} (\rho,u)(x,0)=\begin{cases} (\rho_0,u_0), &
x\leq R\\ (\rho(x),u(x)), & x> R. \end{cases} 
\end{equation} 
Let us make a net of piecewise constant approximations
$(\rho^\varepsilon(x),u^\varepsilon(x))_{\varepsilon}$
of the initial data $(\rho(x),u(x))$.
Take a fixed $\varepsilon >0$ and a corresponding partition 
$\{Y_{i}\}_{i\in \mathbb{N}_{0}}$, 
$R:=Y_0<Y_1<Y_2<\ldots$, satisfying $Y_{i+1}-Y_i\leq \mu(\varepsilon)$, $i=0,1,\ldots$. The precise bound $\mu(\varepsilon)$ will be given in the proofs in
Section \ref{sec:analysis}.
The approximation is chosen such that
$\rho^\varepsilon(x)=\rho(Y_{i+1})=:\rho_{i+1}$,
$u^\varepsilon(x)=u(Y_{i+1})=:u_{i+1}$
for $x\in (Y_i,Y_{i+1}]$, $i\in \mathbb{N}_0$, and
$(\rho^\varepsilon(x),u^\varepsilon(x))=(\rho_0,u_0)$ for $x\leq R$.
Construction of a global solution 
is based on tracking wave fronts and analyzing interactions between waves. 
We need some preparations to do it. 

\begin{remark}
With a slight abuse of notation in the rest of the paper, we will use the
same notation ($u$ and $\rho$) for the initial function (which only
depends on space variable $x$) and for a solution (which depends on $x$
and $t$). A missing argument means that it equals $(x,t)$.
\end{remark}

\begin{definition}[Shadow waves] A shadow wave 
is a piecewise constant function with respect to time 
of the form 
\begin{equation}\label{sdw}
U^\varepsilon(x,t)=\begin{cases} (\rho_l,u_l), &
x<c(t)-a_\varepsilon(t)-x_{l,\varepsilon}\\
(\rho_{l,\varepsilon}(t),u_{l,\varepsilon}(t)), &
c(t)-a_\varepsilon(t)-x_{l,\varepsilon}<x<c(t)\\
(\rho_{r,\varepsilon}(t),u_{r,\varepsilon}(t)), &
c(t)<x<c(t)+b_\varepsilon(t)+x_{r,\varepsilon}\\ (\rho_r,u_r), &
c(t)+b_\varepsilon(t)+x_{r,\varepsilon}<x, \end{cases}
\end{equation} where
$a_\varepsilon(t)$, $b_{\varepsilon}(t)$, $x_{l,\varepsilon}$,
$x_{r,\varepsilon} \sim \varepsilon $. The states
$U_{\ast,\varepsilon}(t)=(\rho_{\ast,\varepsilon}(t),u_{\ast,\varepsilon}(t))$,
$\ast\in \{ l,r\}$ are called intermediate states. 
The curves $x=c(t)-a_\varepsilon(t)-x_{l,\varepsilon}$ and
$x=c(t)+b_\varepsilon(t)+x_{r,\varepsilon}$
are the external, while $x=c(t)$ is the central shadow wave line.
The limit $\lim_{\varepsilon\to 0}
\big((a_\varepsilon(t)+x_{l,\varepsilon})U_{l,\varepsilon}(t)
+(b_\varepsilon(t)+x_{r,\varepsilon})U_{r,\varepsilon}(t)\big)$
is the strength of shadow wave, while its speed is given by
$c'(t)$. Shadow waves with constant speed and constant 
intermediate values are called the simple ones.
Sometimes we use the prefix ``weighted'' for shadow waves with
variable intermediate state.
We say that (\ref{sdw}) solves (\ref{chRS}) in the approximated 
sense if its substitution into the right-hand side of the system
gives terms converging to zero as $\varepsilon \to 0$. 
\end{definition}

Let us note that in the case of system (\ref{chRS}) 
one can use that $U_{\varepsilon}(t)=U_{l,\varepsilon}(t)
=U_{r,\varepsilon}(t)$ without loss of generality, and we shall do it.
Also, note that all necessary calculations when (\ref{sdw}) is substituted
into (\ref{chRS}) can be done by using the classical Rankine-Hugoniot
conditions. In the sequel, we shall often skip the word ``approximate'' and
use only the word ``solution''. 

Approximation of the initial data using the partition 
$\{Y_{i}\}_{i\in \mathbb{N}_{0}}$ generates an infinite number of
Riemann problem for (\ref{chRS}) 
\begin{equation}\label{initial_class}
(\rho,u)(x,0)=\begin{cases} (\rho_i,u_i), & x<Y_i\\ (\rho_{i+1},u_{i+1}),
& x>Y_i \end{cases},\; i=0,1,2,\ldots.
\end{equation}

There are three kinds of solutions to (\ref{chRS}).
If $u_{i}=u_{i+1}$, a solution is a single contact discontinuity
\[ U(x,t):=(\rho,u)(x,t)=\begin{cases}
(\rho_i,u_i), & x-Y_i<u_i t\\ (\rho_{i+1},u_{i+1}), & x-Y_i>u_{i} t.
\end{cases}\]
It will be denoted by $\text{CD}_{i,i+1}$.
If $u_i<u_{i+1}$, solution to the Riemann problem is given by
\[ U(x,t)=\begin{cases} (\rho_i,u_i), &
x-Y_i<u_i t\\ (0,u_i(x,t)), & u_i t<x-Y_i<u_{i+1}t\\
(\rho_{i+1},u_{i+1}), & x-Y_i>u_{i+1} t, \end{cases} \]
with $u_i(x,t)$ being an arbitrary continuous function 
satisfying $u_i(Y_{i}+u_i t,t)=u_i $,
$u_i(Y_{i}+u_{i+1}t,t)=u_{i+1}$. Such solution is denoted by $\text{CD}_1^i+
\text{Vac}_{i,i+1}+\text{CD}_2^{i+1}$. 
Both of the above two solutions are classical and thus admissible. 
If $u_i>u_{i+1}$, the simple shadow wave 
\begin{equation}\label{sw}
U(x,t)=\begin{cases} (\rho_i,u_i), &
x-Y_i<\tilde{c}(t)-\frac{\varepsilon}{2}t\\
(\rho_{i,\varepsilon},u_{i,\varepsilon}), &
\tilde{c}(t)-\frac{\varepsilon}{2}t<x-Y_i<\tilde{c}(t)
+\frac{\varepsilon}{2}t \\ 
(\rho_{i+1},u_{i+1}), & x-Y_i>\tilde{c}(t)+\frac{\varepsilon}{2}t
\end{cases}
\end{equation} 
solves (\ref{chRS}). 
The shock is supported by the curve $c(t):=Y_i+\tilde{c}(t)>0$, 
where $\tilde{c}(0)=0$.
Strength of the wave is $\lim_{\varepsilon\to 0}\varepsilon
\rho_{i,\varepsilon}t$ and $\rho_{i,\varepsilon}\sim \varepsilon^{-1}$. 
More precisely, (\ref{sw}) satisfies system
(\ref{chRS}) in the approximated sense if the terms 
containing the $\delta|_{x=c(t)}$ are balanced:
\[ \begin{split} c'(t)(\rho_{i+1}-\rho_i) -
(\rho_{i+1} u_{i+1} -\rho_i u_i) &\approx \varepsilon
\rho_{i,\varepsilon}\\ c'(t)(\rho_{i+1} u_{i+1}-\rho_i u_i) -
(\rho_{i+1} u_{i+1}^2 -\rho_i u_i^2) &\approx\varepsilon
\rho_{i,\varepsilon} u_{i,\varepsilon}. 
\end{split} \]
The $\delta'$-terms are balanced if $c'(t)=u_s$.  Put
$u_s:=\lim_{\varepsilon\to 0} u_{i,\varepsilon}$ and $\xi
:=\lim_{\varepsilon\to 0}\varepsilon \rho_{i,\varepsilon}$. The above
imply that $\tilde{c}(t)=u_s t$, i.e.\ the speed of shadow wave is
constant, $c'(t)=u_s$. Also, 
\begin{equation}\label{ode_initial_sw}
\begin{split}
\xi&=u_s(\rho_{i+1}-\rho_i)-(\rho_{i+1}u_{i+1}-\rho_iu_i) \\
u_s\xi&=u_s(\rho_{i+1}u_{i+1}-\rho_iu_i)-(\rho_{i+1}u_{i+1}^2-\rho_iu_i^2).
\end{split}
\end{equation}
The system (\ref{ode_initial_sw}) reduces to
\[u_s^2(\rho_{i+1}-\rho_i)-2u_s(\rho_{i+1}u_{i+1}-\rho_iu_i)
+(\rho_{i+1}u_{i+1}^2-\rho_i u_i^2)=0.\]
If $\rho_{i+1}\neq \rho_i$, the solution of the above quadratic
equation is
\[u_s=\frac{\rho_{i+1}u_{i+1}-\rho_iu_i
\pm\sqrt{(\rho_{i+1}u_{i+1}-\rho_{i}u_i)^2-(\rho_{i+1}-\rho_i)
(\rho_{i+1}u_{i+1}^2-\rho_iu_i^2)}}{\rho_{i+1}-\rho_i}.\]

We say that wave (\ref{sw}) is overcompressive if $\lambda_{i}(\rho_{l},u_{l})
\geq u_{s} \geq \lambda_{i}(\rho_{r},u_{r})$, $i=1,2$. 
That will be true if we choose
the $+$ sign above ($u_s$ is a convex combination of $u_i$ and $u_{i+1}$).
So, if we denote $y_{i,i+1}:=u_s$, then overcompressibility condition
becomes
\begin{equation}\label{oc1} u_i\geq y_{i,i+1}\geq u_{i+1}\, 
\text{ and }\, y_{i,i+1}=\frac{\sqrt{\rho_{i+1}}u_{i+1}
+\sqrt{\rho_i}u_i}{\sqrt{\rho_{i+1}}+\sqrt{\rho_i}}.
\end{equation} 
Substituting $y_{i,i+1}$ in $(\ref{ode_initial_sw})$ one gets
that the strength of the shadow wave equals $\xi_{i,i+1}t$, where
$\xi_{i,i+1}:=\xi=\sqrt{\rho_i\rho_{i+1}}(u_i-u_{i+1})$. If
$\rho_{i+1}=\rho_i$, there exists unique solution to the system
(\ref{ode_initial_sw}) with $y_{i,i+1}=\frac{u_{i+1}+u_i}{2}$, 
$\xi_{i,i+1}=\rho_i(u_i-u_{i+1})$. The 
condition (\ref{oc1}) is satisfied in this case, too.

\section{The elementary interactions}\label{sec:interactions}

The first step in construction is the analysis of all
possible interactions between waves obtained after
the initial data approximation by step functions.

Suppose that two approaching waves interact.
Then the right state of the left incoming wave equals the left state of
the right incoming wave. That will be called the middle state in the 
interaction. So, the interaction problem including shadow waves 
can be viewed as an initial value problem containing the delta 
function.

\begin{lemma}\label{lemma:N1} 
Let (\ref{chRS}) with the initial data 
\begin{equation*}
(\rho,u)(x,0)=\begin{cases} (\rho_l,u_l), & x<X\\ (\rho_r,u_r), & x>X
\end{cases}+(\gamma,0)\, \delta_{(X,0)}, 
\end{equation*} 
be given, and denote $(\rho u)|_{t=0}=\tilde{\gamma}\delta_{(X,0)}$, 
where $u_l\geq\tilde{\gamma}/\gamma\geq u_r$, $\gamma>0$, 
$\rho_l,\rho_r\geq 0$. Then there exists an overcompressive 
shadow wave that solves the above initial data problem.
A strength $\xi(t)$ and a speed $u_s(t)$ are solutions to 
\begin{equation}\label{ode_lr} 
\begin{split} \xi'(t)
&=(\rho_r-\rho_l)u_s(t)-(\rho_r u_r- \rho_l u_l),\; \xi(0)=\gamma \\
(\xi(t) u_s(t))' &= (\rho_r u_r- \rho_l u_l)u_s(t)-(\rho_r u_r^2- \rho_l
u_l^2),\; \xi(0)u_s(0)=\tilde{\gamma}. 
\end{split} 
\end{equation} 
The front of the resulting shadow wave is given by 
$x=c(t):=\int_0^t u_s(\tau)\,d\tau + X$. 
\end{lemma}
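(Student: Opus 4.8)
The plan is to realize the solution as a weighted shadow wave of the form (\ref{sdw}) with central curve $x=c(t)$ and common intermediate state $(\rho_\varepsilon(t),u_\varepsilon(t))$ satisfying $\rho_\varepsilon(t)\sim\varepsilon^{-1}$, with the two external lines kept at distance $\sim\varepsilon$ from $c(t)$ and with the intermediate layer having positive width already at $t=0$, so that it carries the initial concentration $\delta_{(X,0)}$. Write $w_\varepsilon(t)$ for the total width of the intermediate layer and $\xi(t):=\lim_{\varepsilon\to0}w_\varepsilon(t)\rho_\varepsilon(t)$ for its limiting strength. First I would substitute this ansatz into (\ref{chRS}) and balance the emerging singular terms exactly as in the derivation of (\ref{ode_initial_sw}): since the convention $U_{l,\varepsilon}=U_{r,\varepsilon}$ leaves no Rankine--Hugoniot defect at the central line, the balancing involves only the two external lines and the $t$-derivative of the mass stored in the intermediate layer. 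Matching the $\delta'_{x=c(t)}$-terms forces $c'(t)=u_s(t):=\lim_{\varepsilon\to0}u_\varepsilon(t)$, which (because $u_s$ now varies) yields a genuinely curved central line; matching the $\delta_{x=c(t)}$-terms yields precisely (\ref{ode_lr}), the rate of change of the strength being the flux imbalance across the concentration. The initial conditions $\xi(0)=\gamma$, $\xi(0)u_s(0)=\tilde\gamma$ are forced by matching the coefficient of $\delta_{(X,0)}$ in the initial data; note $u_s(0)=\tilde\gamma/\gamma\in[u_r,u_l]$ by hypothesis.

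The heart of the proof is the global solvability of (\ref{ode_lr}) with $\xi>0$. Passing to the unknowns $(\xi,u_s)$ and using $(\xi u_s)'=\xi'u_s+\xi u_s'$ one rewrites (\ref{ode_lr}) as
\[
\xi'(t)=\rho_r\big(u_s(t)-u_r\big)+\rho_l\big(u_l-u_s(t)\big),
\qquad
\xi(t)\,u_s'(t)=\rho_l\big(u_s(t)-u_l\big)^2-\rho_r\big(u_s(t)-u_r\big)^2 ,
\]
with $\xi(0)=\gamma>0$, $u_s(0)\in[u_r,u_l]$. On the region $\{\xi>0\}$ the right-hand side is locally Lipschitz, so the Picard--Lindel\"{o}f theorem gives a unique local solution, and an invariant-region argument extends it globally: at $u_s=u_l$ one has $\xi u_s'=-\rho_r(u_l-u_r)^2\le0$ and at $u_s=u_r$ one has $\xi u_s'=\rho_l(u_l-u_r)^2\ge0$, so (as long as $\xi>0$) the interval $[u_r,u_l]$ is invariant for $u_s$; then $\xi'=\rho_r(u_s-u_r)+\rho_l(u_l-u_s)\ge0$ gives $\xi(t)\ge\gamma>0$, while $\xi'\le(\rho_l+\rho_r)(u_l-u_r)$ bounds the growth of $\xi$ linearly, so no finite-time blow-up occurs. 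Hence the solution exists on all of $[0,\infty)$, with $\gamma\le\xi(t)\le\gamma+(\rho_l+\rho_r)(u_l-u_r)\,t$, and it is unique.

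Given $(\xi,u_s)$, I would close the construction by setting $c(t):=X+\int_0^t u_s(\tau)\,d\tau$, choosing widths $a_\varepsilon,b_\varepsilon,x_{l,\varepsilon},x_{r,\varepsilon}\sim\varepsilon$ with total width $w_\varepsilon(t)>0$ for all $t\ge0$, and putting $u_\varepsilon(t):=u_s(t)$ and $\rho_\varepsilon(t):=\xi(t)/w_\varepsilon(t)$; then the $\delta$- and $\delta'$-residuals identified in the first step vanish as $\varepsilon\to0$, so (\ref{sdw}) solves (\ref{chRS}) in the approximated sense with the prescribed initial data, the strength and speed being the required $\xi(t)$ and $u_s(t)$. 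Overcompressibility is then immediate: since $\lambda_i(\rho,u)=u$, $i=1,2$, the inclusion $u_s(t)\in[u_r,u_l]$ is exactly $\lambda_i(\rho_l,u_l)\ge u_s(t)\ge\lambda_i(\rho_r,u_r)$. The only delicate point is the first step—verifying that the $O(\varepsilon^{-1})$ contributions to the distributional residual cancel so that precisely (\ref{ode_lr}) survives in the limit—but this follows the template already established for the Riemann shadow wave (\ref{sw})–(\ref{ode_initial_sw}); the remaining work is the elementary ODE analysis above.
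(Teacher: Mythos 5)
Your proposal is correct, and its first half (substituting the weighted shadow-wave ansatz with common intermediate state, balancing the $\delta$- and $\delta'$-terms to force $c'(t)=u_s(t)$ and derive (\ref{ode_lr}), and fixing $\xi(0)=\gamma$ by giving the intermediate layer positive initial width $x_\varepsilon$ carrying the initial mass $\gamma$) is exactly what the paper does. Where you genuinely diverge is in the treatment of (\ref{ode_lr}) itself: the paper integrates the system in closed form, obtaining the explicit expressions (\ref{sol_xi}) for $\xi(t)$ and $u_s(t)$, and then reads off overcompressibility from the factorization $u_s'(t)=-\frac{[\rho]}{\xi(t)}(u_s(t)-y_{l,r})(u_s(t)-z_{l,r})$, arguing that $u_s$ moves monotonically toward $y_{l,r}$ and hence stays in $[u_r,u_l]$. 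You instead rewrite the system as $\xi'=\rho_r(u_s-u_r)+\rho_l(u_l-u_s)$, $\xi u_s'=\rho_l(u_s-u_l)^2-\rho_r(u_s-u_r)^2$ (both identities check out) and run a qualitative invariant-region/comparison argument on $\{\xi>0\}\times[u_r,u_l]$, which yields global existence, $\xi(t)\geq\gamma>0$, linear growth of $\xi$, and overcompressibility without ever solving the ODE. Your route is more robust and more portable — the paper itself remarks that one should not expect (\ref{ode_lr}) to be explicitly solvable for other systems admitting shadow waves, and your argument would survive that. What the paper's route buys, and yours does not, is the closed formula (\ref{sol_xi}): it is used repeatedly downstream (the special cases (\ref{sol_sdw_l^r}), the identity $u_s'(t)=-A(T_1)\gamma^2/\xi^3(t)$ in the energy section, the Taylor expansions of $\xi$, $u_s$ and $c$ in the uniqueness proof, and the limit $u_s(t)\to y_{l,r}$). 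Two minor quantitative remarks: your upper bound $\xi(t)\leq\gamma+(\rho_l+\rho_r)(u_l-u_r)t$ is weaker than the bound $\gamma+\max\{\rho_l,\rho_r\}(u_l-u_r)t$ of Corollary \ref{coroll:bounds}, although the sharper version follows from your own formula for $\xi'$ since $\rho_r(u_s-u_r)+\rho_l(u_l-u_s)\leq\max\{\rho_l,\rho_r\}(u_l-u_r)$ when $u_s\in[u_r,u_l]$; and the weak boundary inequalities in your invariance argument ($\xi u_s'\leq 0$ at $u_s=u_l$, $\geq 0$ at $u_s=u_r$) do suffice, but only via the scalar comparison theorem with the constant super/subsolutions $u_l$ and $u_r$, which deserves a sentence since the right-hand side only satisfies the non-strict sign condition there.
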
 

\begin{proof} Substitution of the shadow wave 
\[ U^\varepsilon(x,t)=\begin{cases}
(\rho_l,u_l), & x<c(t)-\frac{\varepsilon}{2}t-x_\varepsilon\\
(\rho_\varepsilon(t),u_\varepsilon(t)), &
c(t)-\frac{\varepsilon}{2}t-x_\varepsilon<x<c(t)
+\frac{\varepsilon}{2}t+x_\varepsilon\\
(\rho_r,u_r), & x>c(t)+\frac{\varepsilon}{2}t+x_\varepsilon
\end{cases} \]
into system (\ref{chRS}), where
$\rho_\varepsilon(t)\sim\varepsilon^{-1}$, $x_\varepsilon\sim \varepsilon$
and $u_s(t)=\lim_{\varepsilon\to 0}u_\varepsilon(t)$,
$\xi(t)=\lim_{\varepsilon\to 0}
2\big(\tfrac{\varepsilon}{2}t+x_\varepsilon\big)\rho_\varepsilon(t)$,
$c(0)=X$ reduces to system (\ref{ode_lr}) with the initial data 
$\xi(0)=\gamma$, $u_s(0)=\tilde{\gamma}/\gamma=:c$. 
The condition
$\xi(0)=\gamma$ is satisfied by choosing $x_\varepsilon$ such that
$\int_{X-x_\varepsilon}^{X+x_\varepsilon}\rho(x,0)\,dx=\gamma$.
That makes a distributional solutions being continuous in time.
Then, the solution is 
\begin{equation}\label{sol_xi} 
\begin{split} \xi(t) &=
\sqrt{\gamma^2+\rho_l\rho_{r}[u]^2t^2+2\gamma (c[\rho]-[\rho u])t} \\
u_s(t) &= \begin{cases} \frac{1}{[\rho]}\Big([\rho
u]+\frac{\rho_l\rho_{r}[u]^2t+\gamma(c[\rho]-[\rho u])}{\xi(t)}\Big),
& \text{ if } \rho_l\neq \rho_r\\
\frac{\gamma^2}{\xi^2(t)}(c-\frac{u_l+u_r}{2})+\frac{u_l+u_r}{2}, &
\text{ if } \rho_l= \rho_r,
\end{cases} 
\end{split} 
\end{equation}
where $[\cdot]:=\cdot_r-\cdot_l$ denotes a jump across a shock 
front. If $\rho_l\neq \rho_r$, we have
\begin{equation} \label{uu}
\begin{split}
u_s'(t) & 
= -\frac{[\rho]}{\xi(t)}(u_{s}(t)-y_{l,r})(u_{s}(t)-z_{l,r}) \text{ or} \\
u_s'(t) & =
-\frac{\gamma^2[\rho]}{\xi^3(t)}(c-y_{l,r})(c-z_{l,r}),
\end{split}
\end{equation}
where
\begin{equation}\label{y_{i,k}}
y_{l,r}:=\frac{u_l\sqrt{\rho_l}
+u_{r}\sqrt{\rho_{r}}}{\sqrt{\rho_l}+\sqrt{\rho_{r}}}, \;
z_{l,r}:=\frac{u_l\sqrt{\rho_l}
-u_{r}\sqrt{\rho_{r}}}{\sqrt{\rho_l}-\sqrt{\rho_{r}}}.
\end{equation}
Overcompressibility in the case $\rho_l\neq \rho_r$ follows from the fact
that $u_l\geq u_{s}(0) \geq u_r$. The functions $\rho$ and 
$\xi(t)$ are positive, and from the second line in (\ref{uu}) we have 
$\mathop{\rm sign}(u_s'(t))=-\mathop{\rm sign}([\rho]
(c-y_{l,r})(c-z_{l,r}))=-\mathop{\rm sign}(c-y_{l,r})$,
i.e.\ if $u_{s}(0)>y_{l,r}$, $u_{s}$ decreases. But it cannot go below
value $y_{l,r}$ because its derivative would be positive there due to
the first line in (\ref{uu}). The case $u_{s}(0)<y_{l,r}$
can be handled analogously. One can see that 
$\lim_{t\to \infty}u_s(t)=y_{l,r}$.
If $u_{s}(0)=y_{l,r}$, $u_{s}'$ is a constant, i.e.\ the shadow wave
has a constant speed. 
In any case, $u_s(t)\in [u_r,u_l]$ and the shadow wave is overcompressive. 
The proof in the case $\rho_l=\rho_r$ is similar. 
\end{proof}

\begin{remark}
The above lemma corresponds to Theorem 10.1 from \cite{mn2010},
so it can be used for (\ref{PGD}), too.
In that case the third component in the intermediate state
$U_\varepsilon(t)=(\rho_\varepsilon(t), u_\varepsilon(t), e_\varepsilon(t))$ 
satisfies $e_s(t)=\lim_{\varepsilon\to 0} e_\varepsilon(t)$ and
\begin{equation}\label{third}
c'(t)\Big[\rho \Big(\frac{u^2}{2}+e\Big)\Big]
-\Big[\rho u \Big(\frac{u^2}{2}+e\Big)\Big]
=\frac{d}{dt}\Big(\frac{u_s^2(t)}{2}\xi(t)+e_s(t)\xi(t)\Big).
\end{equation}
\end{remark}

\begin{remark} Note, one could not expect that (\ref{ode_lr}) 
can be explicitly solvable for some other systems admitting 
a shadow wave solution. 
\end{remark}

\begin{corollary}\label{coroll:bounds} With the above notation and
assumptions, we have
\begin{equation}\label{bounds_speed_strength}
\begin{split} & u_l\geq u_s(t)\geq u_r \text{ (overcompressibility
condition) and}\\ & \gamma+\min\{\rho_l,\rho_r\}(u_l-u_r)t\leq
\xi(t)\leq \gamma+\max\{\rho_l,\rho_r\}(u_l-u_r)t.
\end{split}
\end{equation}
\end{corollary}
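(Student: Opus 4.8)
The overcompressibility inequality $u_l\ge u_s(t)\ge u_r$ has already been established inside the proof of Lemma \ref{lemma:N1} (the monotonicity argument based on (\ref{uu}) shows $u_s(t)$ is trapped between $u_r$ and $u_l$ for all $t\ge0$, and the case $\rho_l=\rho_r$ is analogous), so the only genuinely new content is the two-sided bound on the strength $\xi(t)$. My plan is to derive it directly from the first ODE in (\ref{ode_lr}), namely $\xi'(t)=(\rho_r-\rho_l)u_s(t)-(\rho_ru_r-\rho_lu_l)$ with $\xi(0)=\gamma$. The idea is to rewrite the right-hand side so that the already-known bounds on $u_s(t)$ can be fed in. Writing $(\rho_r-\rho_l)u_s-(\rho_ru_r-\rho_lu_l)=\rho_r(u_s-u_r)+\rho_l(u_l-u_s)$, I observe that both summands are nonnegative by overcompressibility, and since $(u_s-u_r)+(u_l-u_s)=u_l-u_r$, the quantity $\xi'(t)=\rho_r(u_s-u_r)+\rho_l(u_l-u_s)$ is a convex combination type expression lying between $\min\{\rho_l,\rho_r\}(u_l-u_r)$ and $\max\{\rho_l,\rho_r\}(u_l-u_r)$.

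Concretely, for the lower bound: $\rho_r(u_s-u_r)+\rho_l(u_l-u_s)\ge \min\{\rho_l,\rho_r\}\big((u_s-u_r)+(u_l-u_s)\big)=\min\{\rho_l,\rho_r\}(u_l-u_r)$, and symmetrically the upper bound replaces $\min$ by $\max$. Hence for every $t\ge0$,
\[
\min\{\rho_l,\rho_r\}(u_l-u_r)\le \xi'(t)\le \max\{\rho_l,\rho_r\}(u_l-u_r).
\]
Integrating from $0$ to $t$ and using $\xi(0)=\gamma$ yields exactly the claimed inequalities in (\ref{bounds_speed_strength}). One should also remark that this argument does not use the explicit solution (\ref{sol_xi}) at all, which makes it robust; but as a sanity check one can verify the bounds are consistent with $\xi(t)=\sqrt{\gamma^2+\rho_l\rho_r[u]^2t^2+2\gamma(c[\rho]-[\rho u])t}$ by noting $c[\rho]-[\rho u]=\rho_r(c-u_r)+\rho_l(u_l-c)\in[\min\{\rho_l,\rho_r\}(u_l-u_r)\cdot(\text{sign stuff}),\dots]$ — though this is optional.

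There is essentially no hard step here; the only thing to be careful about is the degenerate possibility $\rho_l=0$ or $\rho_r=0$ (allowed since the hypothesis is $\rho_l,\rho_r\ge0$), in which case $\min\{\rho_l,\rho_r\}=0$ and the lower bound degenerates to $\xi(t)\ge\gamma$; this is still correct and consistent because $\xi'(t)\ge0$ always (each summand $\rho_r(u_s-u_r)$, $\rho_l(u_l-u_s)$ is nonnegative), so $\xi$ is nondecreasing and the strength never drops below its initial value $\gamma$. The main "obstacle," if any, is purely expository: making clear that the overcompressibility half of the statement is just a restatement of what was proved in Lemma \ref{lemma:N1}, so that the corollary reads as a clean packaging of the bounds needed later in Sections \ref{sec:algorithm} and \ref{sec:analysis}.
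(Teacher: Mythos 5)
Your proposal is correct and follows the same route the paper intends: the paper's proof of the corollary is just the one-line remark that it follows from the proof of Lemma \ref{lemma:N1}, and your argument—quoting the overcompressibility established there and integrating the identity $\xi'(t)=\rho_r(u_s(t)-u_r)+\rho_l(u_l-u_s(t))$ obtained from $(\ref{ode_lr})_1$—is exactly the natural way to fill in that omitted detail. The handling of the degenerate case $\min\{\rho_l,\rho_r\}=0$ is also correct.
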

\begin{proof} It follows from the proof of
Lemma \ref{lemma:N1}.
\end{proof}

Lemma \ref{lemma:N1} is used to solve the interaction problem. 
If the interaction occurs at the point $(X,T)$ the initial
data is translated to the interaction point, while the initial strength of
the resulting shadow wave is equal to the sum of strengths of
incoming waves at interaction time $t=T$. That is,
\begin{equation}\label{initial_strength}
\gamma=\xi(T)=\xi_l(T)+\xi_r(T),\end{equation}
where $\xi_l(t)$ and $\xi_r(t)$, $t<T$
are the strengths of the incoming waves. Also, denote by $u_{s_l}(t)$ and
$u_{s_r}(t)$, $t<T$ the speeds of incoming waves. Due to linear momentum
conservation the value $\tilde{\gamma}$ from Lemma \ref{lemma:N1} equals
$\tilde{\gamma}=\xi(T)u_s(T)=\xi_l(T)u_{s_l}(T)+\xi_r(T)u_{s_r}(T)$. Then
\begin{equation}\label{initial_speed}
c=u_s(T)=\frac{\xi_l(T)u_{s_l}(T)+\xi_r(T)u_{s_r}(T)}{\xi_l(T)+\xi_r(T)}.
\end{equation}

One can neglect the fact that interaction including at least one
shadow wave actually occurs a bit earlier. Let us show why.
Suppose that an interaction occurs between shadow waves with the external
shadow wave lines $x=c(t)\pm\frac{\varepsilon}{2}(t-\tilde{T})
\pm x_\varepsilon$ and
contact discontinuity $x=Y_i+u_{i+1} t$ at time $t=T$. 
The area bounded by the external shadow wave line
$x=c(t)+\frac{\varepsilon}{2}(t-\tilde{T})+x_\varepsilon$, the contact
discontinuity $x=Y_i+u_{i+1} t$, and the line $t=T$ is of the order
$\varepsilon^2$, and $\rho_\varepsilon(t)\sim \varepsilon^{-1}$. 
All terms of growth order less than $\varepsilon$ are neglected,
so one can neglect that area. Look at Figure \ref{f:area} for 
an illustration of the case when contact discontinuity is on 
the right-hand side. The situation is quite similar in the case
of a double shadow wave interaction.

The following lemma is based on the above arguments and will be used
repeatedly in the rest of the paper. For more details see Theorem 7.1 from
\cite{mn2010}.

\begin{lemma}\label{lemma:area} Let two approaching shadow waves with
the central lines given by $x=c_l(t)$ and $x=c_r(t)$ 
interact at time $t=\tilde{T}$. The value of $\tilde{T}$ is 
obtained by solving the equation
\[c_l(t)+\frac{\varepsilon}{2}(t-T_l)+x_{l,\varepsilon}=c_r(t)-\frac{\varepsilon}{2}(t-T_r)-x_{r,\varepsilon},\]
where $x=c_l(t)+\frac{\varepsilon}{2}(t-T_l)+x_{l,\varepsilon}$ is the 
right external SDW line of the first approaching shadow wave, while
$x=c_r(t)-\frac{\varepsilon}{2}(t-T_r)-x_{r,\varepsilon}$ is the 
left external SDW line of the second approaching shadow wave. 
Also, let $x_{l,\varepsilon},x_{r,\varepsilon}\sim \varepsilon$. 
A solution $T$ to $c_{l}(t)=c_{r}(t)$ will be called the interaction
time since the area bounded by two
external shadow wave lines and the line $t=T$ 
is of order $\varepsilon^2$ and all
terms of order $\varepsilon^\alpha$, $\alpha>1$ are neglected. Note that
$T=\tilde{T}+\mathcal{O}(\varepsilon)$.

The assertion stays true if one of the shadow waves is substituted by
a contact discontinuity.
\end{lemma}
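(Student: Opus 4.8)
The plan is to reduce both assertions to one–variable estimates for the gap between the relevant external shadow–wave lines. Put $d(t):=c_r(t)-c_l(t)$ for the gap between the central lines and
\begin{align*}
f_\varepsilon(t)&:=\Big(c_r(t)-\tfrac{\varepsilon}{2}(t-T_r)-x_{r,\varepsilon}\Big)-\Big(c_l(t)+\tfrac{\varepsilon}{2}(t-T_l)+x_{l,\varepsilon}\Big)\\
&=d(t)-\varepsilon\big(t-\tfrac{T_l+T_r}{2}\big)-\big(x_{l,\varepsilon}+x_{r,\varepsilon}\big)
\end{align*}
for the gap between the right external line of the left wave and the left external line of the right wave, so that by definition $\tilde T$ is a zero of $f_\varepsilon$ and $T$ a zero of $d$. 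Since $x_{l,\varepsilon},x_{r,\varepsilon}\sim\varepsilon$, on any fixed finite time horizon we have $f_\varepsilon(t)=d(t)+\mathcal{O}(\varepsilon)$ and $f_\varepsilon'(t)=d'(t)-\varepsilon$. The central speeds $c_l'=u_{s_l}$, $c_r'=u_{s_r}$ are bounded, lying between the $u$–components of the outer states of each wave by Lemma \ref{lemma:N1} and Corollary \ref{coroll:bounds} (and constant if that wave is a contact discontinuity), and the strengths are Lipschitz on that horizon by the bounds in Corollary \ref{coroll:bounds}; in particular $d$ and $f_\varepsilon$ are uniformly Lipschitz in $t$, uniformly in $\varepsilon$, and $C^1$ between interaction times.

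The step I expect to be the main obstacle is the \emph{strict transversality} $d'(T)<0$, i.e.\ that the two central lines actually cross rather than touch tangentially at $t=T$; without it the precise rate $T=\tilde T+\mathcal{O}(\varepsilon)$ would degrade to $\mathcal{O}(\sqrt{\varepsilon})$. I would derive it from overcompressibility of the two incoming waves. If their common middle state is $(\rho_m,u_m)$, the left incoming wave is overcompressive with right–state velocity $u_m$, hence $c_l'(T)=u_{s_l}(T)\ge u_m$; in fact $c_l'(T)>u_m$ for every finite $t$, because by (\ref{oc1})--(\ref{uu}) the speed $u_{s_l}$ is monotone (or constant) with limit $y$ lying strictly between $u_m$ and the left–state velocity, and it starts strictly above $u_m$ since the initial speed after an interaction is a momentum–weighted average by (\ref{initial_speed}) (and equals $y_{i,i+1}$, strictly interior, for a first–generation wave). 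Symmetrically $c_r'(T)=u_{s_r}(T)<u_m$, so $d'(T)=c_r'(T)-c_l'(T)<0$ with a bound independent of $\varepsilon$; the degenerate cases with a vacuum outer state, $\rho_m=0$, or with one incoming wave being itself a contact discontinuity — in which case its ``external lines'' are taken to coincide with its straight central line, the situation of Figure \ref{f:area} — are checked the same way. Granting this, for $\varepsilon$ small and $t$ in a fixed neighbourhood of $T$ we get $f_\varepsilon'(t)=d'(t)-\varepsilon\le\tfrac12 d'(T)<0$, so $f_\varepsilon$ is strictly monotone there; since $f_\varepsilon(T)=d(T)+\mathcal{O}(\varepsilon)=\mathcal{O}(\varepsilon)$, the mean value theorem yields a unique zero $\tilde T$ of $f_\varepsilon$ near $T$ with $|\tilde T-T|=|f_\varepsilon(T)|/|f_\varepsilon'(\theta)|=\mathcal{O}(\varepsilon)$, which is the claim $T=\tilde T+\mathcal{O}(\varepsilon)$.

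Finally I would bound the region $\Omega_\varepsilon$ enclosed by the two external lines and the line $t=T$. Because the left wave's right external line starts strictly to the right of $c_l$ and the right wave's left external line strictly to the left of $c_r$, these two lines already meet at $t=\tilde T$, so $\Omega_\varepsilon$ lies in the time strip between $\tilde T$ and $T$, of length $|T-\tilde T|=\mathcal{O}(\varepsilon)$; at each $t$ in that strip the horizontal width of $\Omega_\varepsilon$ is $|f_\varepsilon(t)|$, which vanishes at $t=\tilde T$, so the Lipschitz bound gives $|f_\varepsilon(t)|\le\mathrm{Lip}(f_\varepsilon)\,|t-\tilde T|=\mathcal{O}(\varepsilon)$ on the whole strip. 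Hence $|\Omega_\varepsilon|=\int_{\tilde T}^{T}|f_\varepsilon(t)|\,dt=\mathcal{O}(\varepsilon^2)$, which is the order $\varepsilon^\alpha$, $\alpha>1$, asserted. Since $\rho_\varepsilon\sim\varepsilon^{-1}$ on a shadow wave and $u_\varepsilon$ is bounded, the mass $\int_{\Omega_\varepsilon}\rho^\varepsilon$ and the momentum $\int_{\Omega_\varepsilon}\rho^\varepsilon u^\varepsilon$ supported in $\Omega_\varepsilon$ are $\mathcal{O}(\varepsilon^2\cdot\varepsilon^{-1})=\mathcal{O}(\varepsilon)\to0$, negligible against the $\mathcal{O}(1)$ mass and momentum carried by the two full shadow–wave strips. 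Consequently the interaction may be treated as occurring at $t=T$: the incoming strengths and speeds at $\tilde T$ differ from those at $T$ by $\mathcal{O}(\varepsilon)$ by the Lipschitz bounds of Corollary \ref{coroll:bounds}, so the new shadow wave may be launched at $t=T$ with strength $\xi_l(T)+\xi_r(T)$ and speed the momentum–weighted average, as in (\ref{initial_strength})--(\ref{initial_speed}). Replacing one set of external lines by a single straight central line turns the identical computation into the contact–discontinuity case.
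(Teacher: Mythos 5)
Your argument follows the same route the paper takes (the paper itself only sketches it in the paragraph preceding the lemma and defers to Theorem 7.1 of \cite{mn2010}): reduce everything to the gap functions $d(t)=c_r(t)-c_l(t)$ and $f_\varepsilon(t)=d(t)+\mathcal{O}(\varepsilon)$, locate $\tilde T$ and $T$ as their zeros, bound the curvilinear triangle by (width)$\times$(height), and conclude that the $\mathcal{O}(\varepsilon^{-1})$ intermediate density contributes only $\mathcal{O}(\varepsilon)$ mass there. Your write-up is more careful than the paper's, and correctly identifies transversality of the crossing as the crux.

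The one step I would push back on is the claim that $d'(T)=u_{s_r}(T)-u_{s_l}(T)<0$ \emph{with a bound independent of} $\varepsilon$. Overcompressibility only gives $u_{s_l}(T)\geq u_m\geq u_{s_r}(T)$, and your strictness argument rests on $u_l>u_m>u_r$; but in the situation where the lemma is actually invoked, the outer states come from the partition, so for two adjacent ``small'' shadow waves one has $u_l-u_r=\mathcal{O}(\mu(\varepsilon))$ and hence, by (\ref{oc1}) and (\ref{dif_decr}), $|d'(T)|=y_{l,m}-y_{m,r}=\mathcal{O}(\sqrt[3]{\varepsilon})$, not bounded below. (This is visible already for two first-generation waves: $T-\tilde T=(Y_{i}-Y_{i-1})\,\varepsilon/\big(\Delta y(\Delta y+\varepsilon)\big)$ with $\Delta y\sim\mu(\varepsilon)$, which is $\mathcal{O}(\varepsilon/\mu)=\mathcal{O}(\varepsilon^{2/3})$, not $\mathcal{O}(\varepsilon)$.) Your mean-value step then yields only $|T-\tilde T|=\mathcal{O}(\varepsilon/|d'|)$ and an area of order $\varepsilon^{2}/|d'|$. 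To be fair, this degeneration afflicts the lemma exactly as stated in the paper as well; the operative conclusions survive, since $|d'|\gtrsim\mu(\varepsilon)\geq\sqrt[3]{\varepsilon}$ for waves that meet in finite time, so the area is still $\mathcal{O}(\varepsilon^{5/3})=\mathcal{O}(\varepsilon^{\alpha})$ with $\alpha>1$ and the neglected mass still vanishes. But you should either restrict the sharp claims ``$T=\tilde T+\mathcal{O}(\varepsilon)$'' and ``area $\sim\varepsilon^{2}$'' to the uniformly transversal case (e.g.\ collisions with the 0-SDW, where Assumption \ref{assump:a} supplies the uniform gap), or replace them by the $\varepsilon/|d'|$ and $\varepsilon^{2}/|d'|$ versions; as written, the sentence ``with a bound independent of $\varepsilon$'' is not justified by overcompressibility alone. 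The remainder of your proof (localization of the region to the strip $\tilde T\leq t\leq T$, the width bound via the Lipschitz constant, the mass/momentum estimate, and the contact-discontinuity degeneration) is correct.
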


\begin{figure}
\begin{center} 
\includegraphics*[scale=0.7]{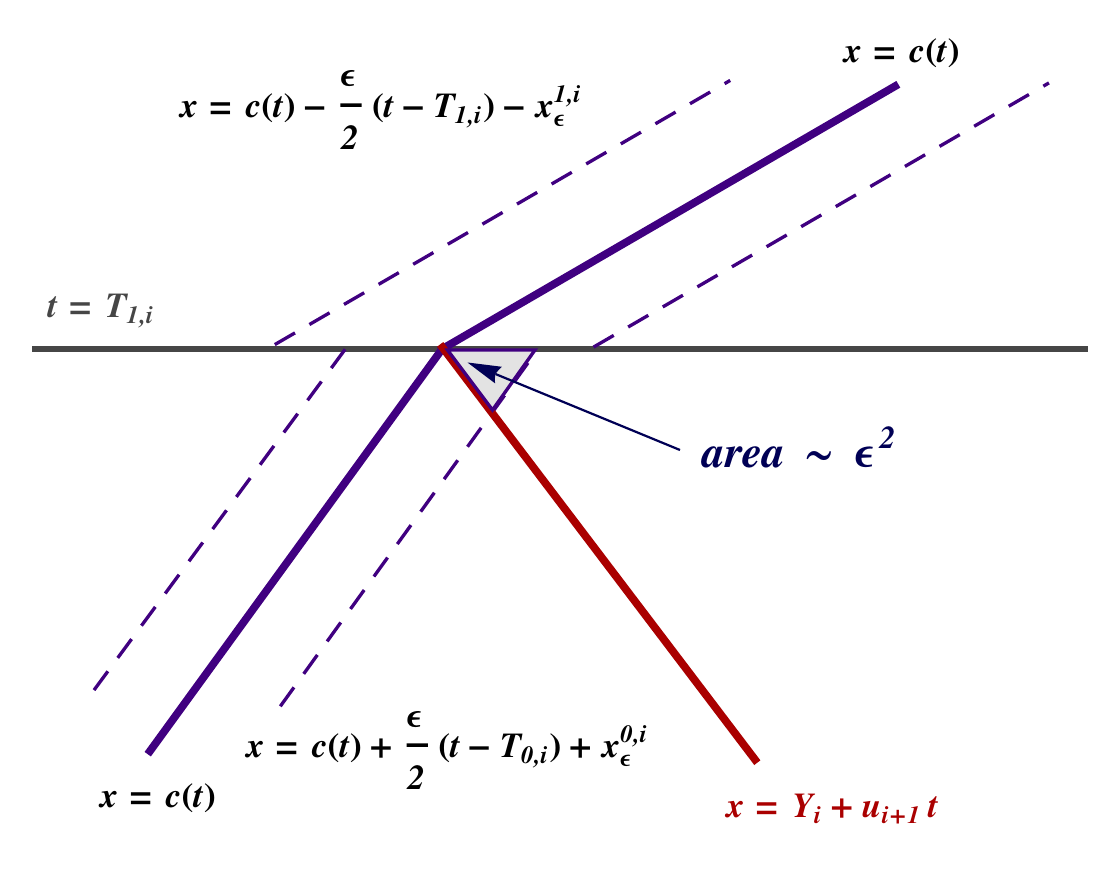}
\caption{Area bounded by the external SDW line, the contact
discontinuity and $t=T$}
\label{f:area}
\end{center}
\end{figure}

\begin{remark}\label{rmk:varepsilon-time} One should have in mind that a
phrase ``waves interact at the same time'' actually means that interactions
between those waves occur in the neglected area of order $\varepsilon^2$ 
described above.  That is, waves interact in a time interval of 
the order $\varepsilon$.
\end{remark}

\section{The algorithm}\label{sec:algorithm}
Let us fix some notation.
A shadow wave joining $(\rho_i,u_i)$ on the left and
$(\rho_j,u_j)$ on the right, $i<j$, 
$\rho_i,\rho_j> 0$ is denoted by $\text{SDW}_{i,j}$.

Let $i$ and $k$ be a given pair of indices. Then $\;^i \text{SDW}_{k}$,
$i\leq k$ denotes a shadow wave joining 
$\text{Vac}_{i-1,i}:=(0,u_{i-1}(x,t))$ on the left to 
$(\rho_k, u_k)$, $\rho_k> 0$ on the right. Note that
$\;^i\text{SDW}_{i}=\text{CD}_2^i$.

A shadow wave joining $(\rho_i,u_i)$, $\rho_i>0$ on its left to 
$\text{Vac}_{k,k+1}=(0,u_k(x,t))$ on its
right will be denoted by $\text{SDW}_i^k$, $i\leq k$. Again, 
$\text{SDW}_{i}^i=\text{CD}_1^i$. 

A shadow wave joining $(0, u_{i-1}(x,t))$ on the left
and $(0,u_{k}(x,t))$ on the right is denoted by
$\;^i \text{SDW}^k$.

\begin{remark} A wave $\text{SDW}_{i,j}$ exists only if $u_i\geq u_j$. 
Waves $\text{SDW}_l^r$ and $\;^l\text{SDW}_r$ are special solutions
to (\ref{sol_xi}). If $\rho_l> 0$ and $\rho_r=0$,  
\begin{equation}\label{sol_sdw_l^r}
\begin{split} \xi(t) & =\sqrt{\gamma^2 + 2\rho_l\gamma(u_l-c)t}\\ u_s(t)
& = u_l -\frac{\gamma(u_l-c)}{\sqrt{\gamma^2 +
2\rho_l\gamma(u_l-c)t}}\\ c(t) & =X+u_l t-\frac{1}{\rho_l}\xi(t) +
\frac{\gamma}{\rho_l}. 
\end{split} 
\end{equation} 
If $\rho_l=0$ and
$\rho_r>0$, the solution is given by (\ref{sol_sdw_l^r}), with $\rho_l$ and
$u_l$ replaced by $\rho_r$ and $u_r$. Finally, if $\rho_l=\rho_r=0$, the
resulting wave $\;^l\text{SDW}^r$ propagates with constant speed
and strength. 
\end{remark} 

A situation when three or more waves interact
at the same time in the sense of Remark \ref{rmk:varepsilon-time} 
is treated in the same way.
Suppose that there are $m$ incoming waves, $W_1,\ldots, W_m$. 
A resulting single wave depends on a state on the left to 
$W_{1}$, a state on the right of $W_{m}$, wave speeds 
and a sum of their strengths.
The middle states are lost in the interaction and 
there are the following possibilities.

\noindent {\rm (A1):} The wave $W_1$ has a left state 
$(\rho_l,u_l)$, $\rho_l>0$ and  
$W_m$ has a right state $\text{Vac}_{r,r+1}$.
The result is a single $\text{SDW}_l^r$, $l<r$.

\noindent
{\rm (A2):} The wave $W_1$ has a left 
state $(\rho_l,u_l)$, $\rho_l>0$ and $W_m$ has a right state
$(\rho_r,u_r)$, $\rho_r>0$. The result is a single $\text{SDW}_{l,r}$,
$l<r$.

\noindent
{\rm (A3):} The wave $W_1$ has a left state $\text{Vac}_{l-1,l}$
and $W_m$ a right state $(\rho_r,u_r)$, $\rho_r>0$.
The result is a single $\;^l\text{SDW}_{r}$, $l<r$.

\noindent
{\rm (A4):} The wave $W_{1}$ has a left state $\text{Vac}_{l-1,l}$ 
and $W_m$ a right state $\text{Vac}_{r,r+1}$. 
The result is a single $\;^l\text{SDW}^{r}$, $l<r$.

If the incoming waves are overcompressive, the resulting wave is
overcompressive, too. That follows from Corollary \ref{coroll:bounds} and
relation (\ref{initial_speed}). We are in a position to construct
an approximated solution.
\medskip

\noindent {\sc Algorithm:} \\
Suppose that given $\varepsilon$ is small enough. 

\noindent
{\sc Step 0}. Let $u_0\in\mathbb{R}$, $\rho_0>0$ be  
constants from (\ref{ri}). The set of initial states
$\{u_i\}_{i\in \mathbb{N}_{0}}$ 
and $\{\rho_i\}_{i\in \mathbb{N}_{0}}$ are sequences generated 
by the piecewise constant approximations of the functions 
$u(x)$ and $\rho(x)$, respectively, 
described in the paragraph below (\ref{ri}).

\noindent
{\sc Step 1}. Denote by $S_0:=\{U_k:\ k=0,1,2,\ldots\}$
the set of the initial states and by $I_0:=\{0,1,2,\ldots\}$
the set of corresponding indexes. A solution obtained by solving 
Riemann problems (\ref{chRS}, \ref{initial_class})
generated by states in $S_{0}$ is 
stopped at $t=T_1$ when the first interaction between two or
more waves occurs. If there are no interactions, 
all wave fronts continue to propagate to infinity
and the procedure finishes.  Each interaction between two or more waves
belongs to one of the four types (A1--A4) and gives a single 
shadow wave as a result. The resulting wave(s) 
as well as all other (non-interacting) waves
constitute a new set of states $S_1$ and a corresponding set of indexes 
$I_1\subset I_0$ after $t>T_{1}$.

\noindent
{\sc Step $j$ to $j+1$}. Suppose that $j$-th interaction occurs 
at a time $t=T_j$. Then we eliminate all 
middle states from $S_{j-1}$ and obtain a new
set $S_j$ and a corresponding $I_j=\{0,j_1,j_2,j_3,\ldots\}
\subset I_{j-1},\; 1\leq j_1<j_2<\ldots$. 
$k\in I_{j-1}\setminus I_j$ means that the state $U_{k}$ 
was a middle one in $S_{j-1}$.
All non-interacting waves are prolonged after $t>T_j$.
The procedure repeats with $j$ substituted by $j+1$
after a new interaction at $t=T_{j+1}$.
The algorithm stops when there is no $T_{j+1}$.
\medskip

It will be proved below that the procedure presented above gives a global
admissible solution to the problem (\ref{chRS}, \ref{ri}).

\begin{remark} \label{x}
	The above types (A1-A4) cover all possible interactions
between two or more waves. So the above procedure can also be applied
to the problem with initial data
\begin{equation*}
(\rho,u)(x,0)=\begin{cases} (\rho(x),u(x)), & x \leq R\\ (\rho_0,u_0), &
x>R, \end{cases}
\end{equation*} or any initial data
\begin{equation}\label{initial_3}
\rho(x,0)=\rho(x),\; u(x,0)=u(x), \;
x\in\mathbb{R},
\end{equation}
where $\rho(x)>0$, $u(x)$ having a finite number of jumps and
being piecewise $C_b^1\big(\mathbb{R}\big)$. 
\end{remark}

\section{Global existence and admissibility
of a solution}\label{sec:analysis}

The proof that our algorithm gives an admissible solution 
is divided into cases depending on monotonicity of a function $u(x)$
and relations between $u_{0}$ and $u(R)$.
A function $u(x)$ is called increasing (or decreasing) if 
$u(x)\leq u(y)$ (or $u(x)\geq u(y)$) for each $x<y$.
The function $u(x)$ is strictly increasing (or decreasing) 
if the inequality is strict.

\noindent
{\sc Case I}. $u(x)$ is increasing function for $x>R$ and $u_0\leq u(R)$.

This is a simple case with no interactions. The solution 
is a piecewise continuous function whose jumps are located along 
contact discontinuity lines.  
That is the consequence of the fact that $u_i\leq
u_{i+1}$ for each $i=0,1,\ldots$. Such waves never interact since the one
in front has a larger or the same speed.
\medskip

\noindent
{\sc Case II}. $u(x)$ is increasing function for $x>R$ 
and $u_0>u(R)$. 

Due to the boundedness assumption, there exists $\tilde{u}$, 
$\lim_{i\to\infty}u_i=\tilde{u}$. 
\begin{figure}[ht]
\begin{center}
\includegraphics*[scale=0.6]{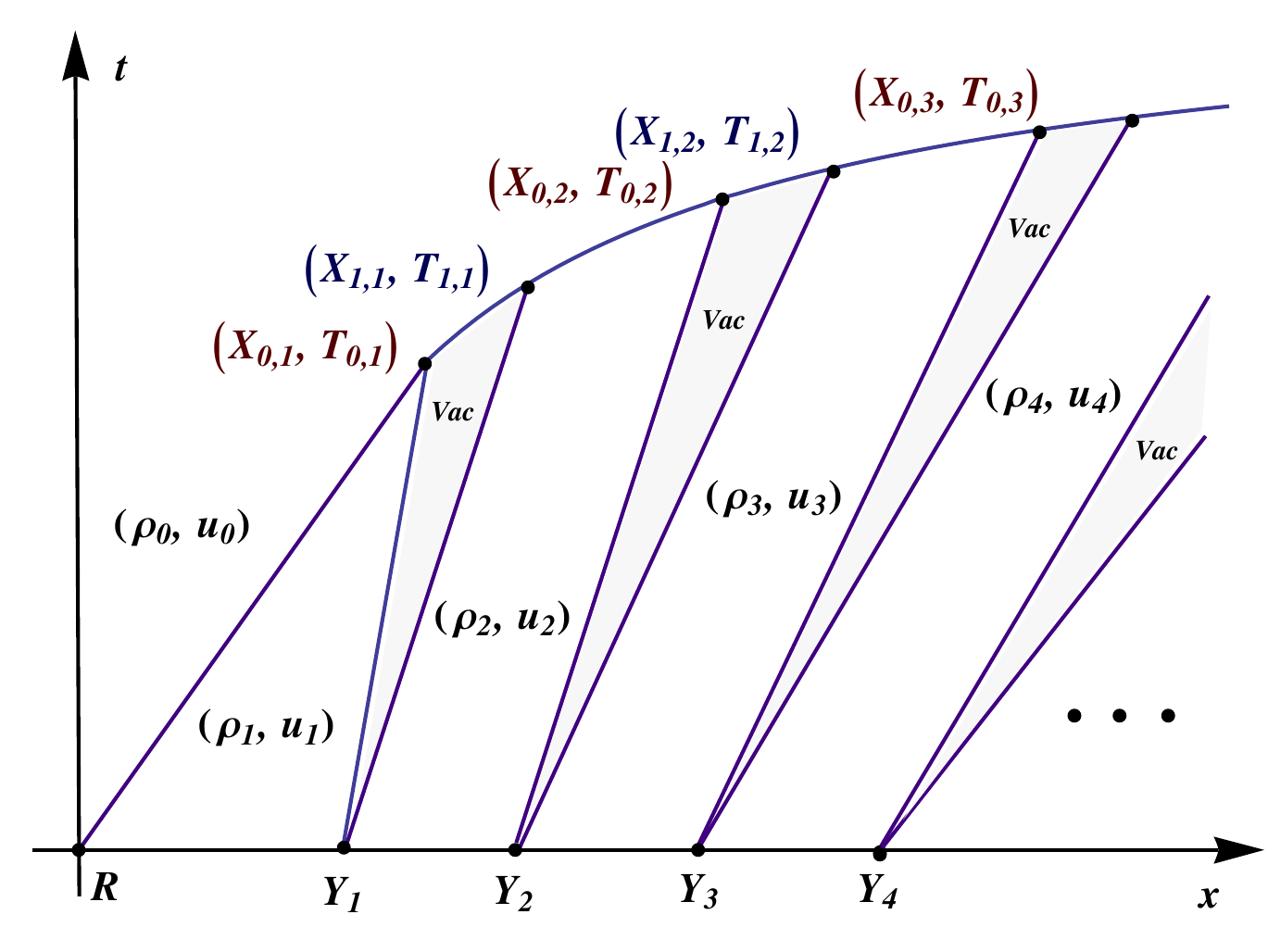}
\caption{Sketch of the interactions for strictly increasing $u(x)$ and
$\tilde{u}\leq u_0$} %\label{f:interac} 
\end{center}
\end{figure}
The wave $\text{SDW}_{0,1}$ emanating from the point $(R,0)$ is a solution to 
(\ref{chRS}, \ref{initial_class}) for $i=0$. Solutions
to (\ref{chRS}, \ref{initial_class}) are 
$\text{CD}_1^i+\text{Vac}_{i,i+1}+\text{CD}_2^{i+1}$ emanating from
$(Y_i,0)$, $i=1,2,\ldots$. If $u_i=u_{i+1}$, the combination reduces to a single
$\text{CD}_{i,i+1}$. Note that all the interactions in this case
are of types (A1) or (A2). 
After each interaction exists only shadow wave
that started at $(R,0)$. Denote by $(X_{0,i},T_{0,i})$ a point where 
it meets the first contact discontinuity in the 
$i-$th wave combination $\text{CD}_1^i+\text{Vac}_{i,i+1}+\text{CD}_2^{i+1}$. 
$(X_{1,i},T_{1,i})$ is the interaction point of the shadow
wave and the second contact discontinuity.
The overcompressibility follows from Corollary
\ref{coroll:bounds} and interactions continue to infinity if
$\tilde{u}\leq u_0$ because of it.
If $\tilde{u}>u_0$, the solution is same until a point 
where the shadow wave enters the vacuum state and interactions stop,
again due to the overcompressibility.

The case of a single contact discontinuity when $u_i=u_{i+1}$ for some $i$
makes no real difference in the analysis. 
\medskip 

Let $U^{\varepsilon}=(\rho^{\varepsilon},u^{\varepsilon})$ be a function obtained by the above
procedure for a fixed $\varepsilon$. Denote by $\hat{U}^{\varepsilon}$ its singular 
part represented by the shadow wave approximation
\[ \hat{U}^{\varepsilon}(x,t)=\begin{cases} 
0, & x<c(t)-a_{\varepsilon}(t) \\
(\rho_{\varepsilon},u_{\varepsilon})(t), & 
c(t)-a_{\varepsilon}(t)<x<c(t)+a_{\varepsilon}(t) \\
0, & x>c(t)+a_{\varepsilon}(t)
\end{cases} \]
where 
\[a_{\varepsilon}(t)=\begin{cases}
\frac{\varepsilon}{2}t, & t\in (0,T_{0,1}] \\
\frac{\varepsilon}{2}(t-T_{0,i})+x_{\varepsilon}^{0,i},
& t\in (T_{0,i},T_{1,i}]\\
\frac{\varepsilon}{2}(t-T_{1,i})+x_{\varepsilon}^{1,i},
& t\in (T_{1,i},T_{0,i+1}]
\end{cases}, \; i=1,2,\ldots.\] 
Here, the values 
$(\rho_{\varepsilon},u_{\varepsilon})(t)$ and $x_{\varepsilon}^{k,i},$ $k=0,1$
are determined by Lemma \ref{lemma:N1} for each interval $(T_{0,i},T_{1,i}]$,  
$(T_{1,i},T_{0,i+1}]$, $i=1,2,\ldots$ separately.
That part of a solution is called 0-SDW and it approximates 
a weighted delta function with variable speed.

\begin{theorem}\label{theorem:global_sol} Let $u(x),\rho(x)\in
C_b\big([R,\infty)\big)$, $\rho(x)>0$. Assume that $u(x)$ is increasing
and let $\rho_0>0$ and $u_0>u(R)$. Take a partition
$\{Y_i\}_{i\in\mathbb{N}_0}$ of $[R,\infty)$, $Y_0=R$ such that
$C\sqrt[3]{\varepsilon}\geq Y_{i}-Y_{i-1}\geq \sqrt{\varepsilon}$ for every
$i=1,2,\ldots$ and a constant $C\geq 1$.
There exists an admissible global solution
to (\ref{chRS}, \ref{ri}), i.e.\ there exists a function
$U^\varepsilon=(\rho^\varepsilon,u^\varepsilon)$ satisfying
\begin{equation*}
\begin{split}
\partial_t \rho^{\varepsilon} +\partial_x(\rho^{\varepsilon}
u^{\varepsilon}) \approx
0,\; \partial_t(\rho^{\varepsilon} u^{\varepsilon})
+ \partial_x (\rho^{\varepsilon}
(u^{\varepsilon})^2) \approx 0,
\end{split}
\end{equation*}
$\rho^{\varepsilon}(x,0)\approx \rho(x,0)$,
$u^{\varepsilon}(x,0)\approx u(x,0)$ as
$\varepsilon \to 0$ and the admissibility condition.
\begin{enumerate}
\item If $\tilde{u}\leq u_0$, there are infinitely many interaction points.
\item If $\tilde{u}>u_0$, the interactions will stop with the
interaction point $(X_{0,k}, T_{0,k})$ where $k\in \mathbb{N}$ is
taken such that $u_k<u_0\leq u_{k+1}$ holds. In that case $u_s(t)\to
u_0$ as $t\to \infty$.
\end{enumerate}
\end{theorem}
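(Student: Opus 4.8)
The plan is to track the single shadow wave $\text{0-SDW}$ born at $(R,0)$ and to show that all of its successive interactions are of type (A1)/(A2), hence governed by Lemma~\ref{lemma:N1}, and that the interaction points accumulate (or stop) according to the position of $\tilde u$ relative to $u_0$. First I would set up the bookkeeping: for the fixed partition, solving each Riemann problem $(\ref{initial_class})$ gives $\text{SDW}_{0,1}$ at $(R,0)$ (since $u_0>u(R)=u_1$ up to the approximation) and combinations $\text{CD}_1^i+\text{Vac}_{i,i+1}+\text{CD}_2^{i+1}$ at $Y_i$ for $i\ge 1$ (since $u(x)$ is increasing so $u_i\le u_{i+1}$); these contact discontinuities never interact with each other because a right one in a pair has speed $u_{i+1}\ge u_i$ equal to the left speed of the next pair. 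So the only interactions are collisions of the central shadow line $x=c(t)$ with a $\text{CD}_1^i$ (type A1) followed by a collision with the corresponding $\text{CD}_2^{i+1}$ (type A2 or A1), producing again a single shadow wave by the (A1–A4) analysis, with new data given by $(\ref{initial_strength})$, $(\ref{initial_speed})$, and evolution by $(\ref{ode_lr})$ on the next time interval. By Corollary~\ref{coroll:bounds} each such resulting wave is overcompressive, which is exactly the admissibility condition claimed, and by construction $U^\varepsilon$ solves $(\ref{chRS})$ in the approximate sense with $U^\varepsilon(\cdot,0)\approx (\rho,u)(\cdot,0)$; the bulk of the remaining work is about whether the sequence $(X_{0,i},T_{0,i})$ is finite or infinite.

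For the quantitative part I would compare the shadow wave's central position $c(t)$ with the vacuum-front lines. On each sub-interval the speed $u_s(t)$ lies in $[u_{i+1},u_0]$ when the wave separates state $(\rho_0,u_0)$ on the left from vacuum or $(\rho_{i+1},u_{i+1})$ on the right (overcompressibility, Lemma~\ref{lemma:N1}), and the strength satisfies the two-sided linear bound of $(\ref{bounds_speed_strength})$: $\xi(t)$ grows at least like $\gamma+\min\{\rho_l,\rho_r\}(u_l-u_r)t$. In Case (1), $\tilde u\le u_0$: I would argue that since $u_i\le\tilde u\le u_0$ for all $i$, the shadow wave always has $u_s\ge \tilde u$ strictly larger — or at least no smaller in the limit — than the speed $u_{i+1}$ of the next vacuum block on its right, so it keeps overtaking the blocks; because the partition blocks have width between $\sqrt\varepsilon$ and $C\sqrt[3]\varepsilon$ (finite, $\varepsilon$-independent count of blocks per unit length is not the point — rather, the speed gap stays bounded below on any finite $x$-range), the wave sweeps through infinitely many of them, giving infinitely many interaction points. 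In Case (2), $\tilde u>u_0$: choose $k$ with $u_k<u_0\le u_{k+1}$. When the shadow wave reaches the $k$-th vacuum block, on its right the state is $\text{Vac}_{k,k+1}$ with left vacuum edge moving at speed $u_k$ and right edge at $u_{k+1}\ge u_0$. Using $(\ref{sol_sdw_l^r})$ (the $\rho_r=0$ reduction) one sees $u_s(t)=u_l-\gamma(u_l-c)/\sqrt{\gamma^2+2\rho_l\gamma(u_l-c)t}\to u_l=u_0$ monotonically as $t\to\infty$, and since the right vacuum edge travels at speed $u_{k+1}\ge u_0>u_s(t)$ for all $t$, the shadow wave never catches it; hence no further interaction occurs after $(X_{0,k},T_{0,k})$ and $u_s(t)\to u_0$.

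The main obstacle I anticipate is the accumulation argument in Case (1): one must rule out that the interaction times $T_{0,i}$ converge to a finite $T^\ast$ (a "Zeno" accumulation), or rather — since the theorem asserts \emph{infinitely many} interaction points, not that they escape to $t=\infty$ — one must simply exhibit that there is no last interaction, which follows once the speed gap $u_s(t)-u_{i+1}$ stays positive; the delicate point is when $\tilde u=u_0$ and the gap tends to zero, where I would need the strength lower bound $\xi(t)\ge\gamma+\min\{\rho_l,\rho_r\}(u_l-u_r)t$ to guarantee the wave still has enough "momentum deficit" to keep overtaking, combined with the fact that $u_i<\tilde u=u_0$ strictly for every finite $i$ (by monotonicity and $u(R)<u_0$, unless $u$ is eventually constant, a case handled as noted in the text). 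A secondary technical point is the careful translation of initial data to each interaction point and the $\mathcal O(\varepsilon^2)$-area bookkeeping from Lemma~\ref{lemma:area}, which ensures that treating the collisions as occurring at the single time $T$ (solution of $c_l=c_r$) introduces only negligible error; I would invoke Lemma~\ref{lemma:area} to dispose of this rather than redo the estimate.
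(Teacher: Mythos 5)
Your structural bookkeeping (which waves emanate from the $Y_i$, why the only interactions involve the single 0-SDW born at $(R,0)$, the use of Lemma~\ref{lemma:N1} and Corollary~\ref{coroll:bounds} at each collision, and the dichotomy between $\tilde u\leq u_0$ and $\tilde u>u_0$) matches the paper, and your handling of items (1) and (2) is essentially the paper's. But there is a genuine gap at the center: you dispose of the main assertion --- that $U^\varepsilon$ satisfies (\ref{chRS}) in the approximated sense globally --- with the phrase ``by construction,'' and you declare that the bulk of the remaining work is deciding whether the sequence $(X_{0,i},T_{0,i})$ is finite or infinite. In the paper it is the other way around: items (1) and (2) take a few lines, and the bulk of the proof is the verification that the weak-form residuals $E_1,E_2$ tend to zero. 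That verification is not automatic from the per-interval construction. One must (i) decompose $\int\!\!\int\big(\rho^\varepsilon\partial_t\varphi+\rho^\varepsilon u^\varepsilon\partial_x\varphi\big)$ over the strips $[0,T_{0,1}]$, $[T_{0,i},T_{1,i}]$, $[T_{1,i},T_{0,i+1}]$ and check that the boundary terms produced at the interaction times cancel, which uses the continuity in $t$ of the distributional solution enforced by the choice of $x_\varepsilon$ (equal total strength before and after each collision); (ii) show, via the Taylor expansion of $\varphi$ around $x=c(t)$ and the ODEs of Lemma~\ref{lemma:N1}, that each strip contributes an error of order $\mathcal O(\varepsilon)$; and (iii) count the strips. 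Step (iii) is exactly where the hypothesis $Y_i-Y_{i-1}\geq\sqrt{\varepsilon}$ enters: on the compact support of $\varphi$ there are at most $\mathrm{const}(\varphi)/\sqrt{\varepsilon}$ interactions, so the accumulated error is $\mathcal O(1/\sqrt{\varepsilon})\cdot\mathcal O(\varepsilon)=\mathcal O(\sqrt{\varepsilon})\to 0$. Your parenthetical remark that the ``$\varepsilon$-independent count of blocks per unit length is not the point'' is therefore exactly backwards: the number of blocks met by $\mathop{\rm supp}\varphi$ is $\varepsilon$-dependent, of order $\varepsilon^{-1/2}$ at worst, and bounding it is the whole reason the lower bound on the mesh appears in the statement. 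Without step (iii) there is no control over the sum of the per-interval errors and the claim $E_1,E_2\approx 0$ is unproved.

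Two smaller remarks. Your worry about a Zeno accumulation of the $T_{0,i}$ is easily dispatched and is not an issue in the paper: the 0-SDW must traverse each block of width at least $\sqrt{\varepsilon}$ while its relative speed with respect to the contact discontinuity it is chasing is bounded above, so consecutive interaction times are separated by a fixed positive amount. Your argument for item (2) via the explicit formulas (\ref{sol_sdw_l^r}) in the vacuum region is correct and agrees with the paper's (shorter) overcompressibility argument.
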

\begin{remark}
One can use any $\mu(\varepsilon)\to 0,$ $\varepsilon\to 0$
instead of $C\sqrt[3]{\varepsilon}$ above. We have used that one
because of Theorem \ref{theorem:global_sol_mon} where $u(x)$
is not necessarily monotone. Also, any lower bound of order
$\varepsilon^\alpha,$ $0<\alpha<1$ can be used instead of
$\sqrt{\varepsilon}$ here.
\end{remark}
\begin{proof}
For a readers convenience we will present the complete proof here. 
Later on, we will skip technical details since they are 
similar to the ones in this proof.

\noindent
(1) Let $\tilde{u}\leq u_0$.
We have to prove that a solution $U^\varepsilon$,
$t\geq 0$, $x\in\mathbb{R}$ satisfies the following relations
\[\begin{split}
E_1 \!& :=\!\int_{0}^\infty\!\!\!\int_{-\infty}^\infty\!\!
\Big((\rho^{\varepsilon}\partial_t\varphi)(x,t)\!+\!(\rho^{\varepsilon}
u^{\varepsilon}\partial_x\varphi)(x,t)\Big)dxdt
\!+\!\int_{-\infty}^{\infty}(\rho^{\varepsilon}\varphi)(x,0)dx \approx 0\\
E_2 \!& :=\!\int_{0}^\infty\!\!\!\int_{-\infty}^\infty\!\!
\Big(\!(\rho^{\varepsilon} u^{\varepsilon}\partial_t\varphi)(x,t)\!+\!(\rho^{\varepsilon}
(u^{\varepsilon})^2\partial_x\varphi)(x,t)\!\Big)dxdt\!
+\!\int_{-\infty}^{\infty}\!\!(\rho^{\varepsilon}
u^{\varepsilon}\varphi)(x,0)dx \approx 0, 
\end{split}\] 
for every test function
$\varphi\in C_0^\infty\big(\mathbb{R}\times [0,\infty)\big)$.
We use the Taylor expansion of the test function $\varphi$, 
\begin{equation}\label{Taylor}
\begin{split}
&\varphi\big(c(t)-a_\varepsilon(t),t\big)
=\varphi\big(c(t),t\big)-\partial_x
\varphi\big(c(t),t\big)a_\varepsilon(t)
+\mathcal{O}(\varepsilon^2)\\
&\varphi\big(c(t)+a_\varepsilon(t),t\big)
=\varphi\big(c(t),t\big)+\partial_x
\varphi\big(c(t),t\big)a_\varepsilon(t)
+\mathcal{O}(\varepsilon^2)\\
&\varphi(x,t)=\varphi(c(t),t)+\mathcal{O}(\varepsilon)
\mbox{ for }x\in\big(c(t)-a_\varepsilon(t),
c(t)+a_\varepsilon(t)\big).
\end{split}\end{equation} 
Thus
\begin{equation*}\label{div_thm}
\begin{split} & \int_0^\infty \!\!\int_{-\infty}^\infty
(\rho^\varepsilon\partial_t \varphi)(x,t) \,dxdt =
I_0+\int_{-\infty}^\infty
\big((\rho^{\varepsilon}\varphi)(x,T_{0,1}-0)
-(\rho^{\varepsilon}\varphi)(x,0)\big)dx\\
&+\sum_{i=1}^{\infty}I_{0,i}+\sum_{i=1}^\infty \int_{-\infty}^\infty
\Big((\rho^{\varepsilon}\varphi)(x,T_{1,i}-0)
-(\rho^{\varepsilon}\varphi)(x,T_{0,i} + 0)\Big)\,dx\\ 
&+\sum_{i=1}^{\infty}I_{1,i}+\sum_{i=1}^\infty
\int_{-\infty}^\infty \Big((\rho^{\varepsilon}\varphi)(x,T_{0,i+1}-0)
-(\rho^{\varepsilon}\varphi)(x,T_{1,i}+0)\Big)\,dx,
\end{split} 
\end{equation*} 
where $I_0,I_{0,i}$ and $I_{1,i}$, are
integrals over $[0,T_{0,1}]$, $[T_{0,i},T_{1,i}]$ and $[T_{1,i},T_{0,i+1}]$,
respectively. All other terms cancel with the initial data and mutually
because we asked for a continuity of $U^\varepsilon$ with respect to $t$. 
In the same way the flux--part can be decomposed
$\int_0^\infty \int_{-\infty}^\infty \rho^\varepsilon u^\varepsilon
\partial_x \varphi\,dxdt=J_0+\sum_{i=1}^\infty J_{0,i}
+\sum_{i=1}^\infty J_{1,i}$,
where $J_0$, $J_{0,i}$ and $J_{1,i}$ are integrals over $[0,T_{0,1}]$,
$[T_{0,i},T_{1,i}]$ and $[T_{1,i},T_{0,i+1}]$, respectively. 
Note that we have finitely many
intervals due to the compactness of ${\rm supp}\, \varphi$.
If $u(x)$ is not strictly increasing, then some of the points $T_{0,i}$ and
$T_{1,i}$ would coincide. That does not influence the analysis.
\medskip

In the first interval $[0,T_{0,1}]$, we have 
\[\begin{split} 
I_0 =& \!-\!\int_{0}^{T_{0,1}}\!\!\rho_1 u_1\varphi\big(Y_1+u_1 t,t\big)dt\!-\!
\int_{0}^{T_{0,1}}\!\!(\rho_0-\rho_{0,\varepsilon})\varphi
\Big(\!R+y_{0,1}t-\frac{\varepsilon}{2}t,t\!\Big)\Big(y_{0,1}
-\frac{\varepsilon}{2}\Big)dt\\
&-\int_{0}^{T_{0,1}}(\rho_{0,\varepsilon} -\rho_1)
\varphi\Big(R+y_{0,1}t+\frac{\varepsilon}{2}t,t\Big)\Big(y_{0,1}
+\frac{\varepsilon}{2}\Big)dt\\ &+\underbrace{\sum_{i=1}^\infty
\int_{0}^{T_{0,1}} \rho_{i+1}u_{i+1}\big(\varphi\big(Y_i+u_{i+1}t,t\big)
-\varphi\big(Y_{i+1}+u_{i+1}t,t\big)\big)dt}_{:=A_0},
\end{split}\]
\[\begin{split}
J_0 :=& \int_0^{T_{0,1}}\!\!\int_{-\infty}^\infty \rho^\varepsilon
u^\varepsilon \partial_x \varphi\,dxdt =
\int_{0}^{T_{0,1}}\!\!\big(\rho_0u_0-\rho_{0,\varepsilon}
u_{0,\varepsilon}\big)\varphi\Big(R+y_{0,1}
t-\frac{\varepsilon}{2}t,t\Big)dt\\
&-\!\int_{0}^{T_{0,1}}\!\!\big(\rho_1u_1-\rho_{0,\varepsilon}
u_{0,\varepsilon}\big)\varphi\Big(R+y_{0,1}
t+\frac{\varepsilon}{2}t,t\Big)dt
+\int_{0}^{T_{0,1}}\!\!\rho_1u_1\varphi\big(Y_1+u_1 t,t\big)dt\\
&-\underbrace{\sum_{i=1}^\infty
\int_{0}^{T_{0,1}}\!\!\rho_{i+1}u_{i+1}\big(\varphi\big(Y_i+u_{i+1}t,t\big)-\varphi\big(Y_{i+1}+u_{i+1}t,t\big)\big)dt,}_{:=B_0}
\end{split}\]
since $\rho_{0,\varepsilon}$ does not depend on $t$. 
Using $\xi_{0,1}=y_{0,1}(\rho_1-\rho_0)-(\rho_1u_1-\rho_0 u_0)$,
(\ref{Taylor}) and the fact that $A_{0}=B_{0}$
we get
\[\begin{split} I_0+J_0
=&\int_0^{T_{0,1}}\!\!\big(\xi_{0,1}-\varepsilon
\rho_{0,\varepsilon}\big)\varphi\big(R+y_{0,1}t,t\big)\,dt\\
&- \int_0^{T_{0,1}}\!\!
t\varepsilon\rho_{0,\varepsilon}(y_{0,1}-u_{0,\varepsilon})\partial_x
\varphi\big(R+y_{0,1}t,t\big)\,dt+\mathcal{O}(\varepsilon).
\end{split}\]
From the fact that $\lim_{\varepsilon\to 0}u_{0,\varepsilon}=y_{0,1}$, 
$\lim_{\varepsilon\to 0}{\varepsilon \rho_{0,\varepsilon}}=\xi_{0,1}$, 
we have $E_1=\mathcal{O}(\varepsilon)$ in the strip $[0,T_{0,1}]$. 
The same relations with $\rho$ substituted by $\rho u$ and $\rho u$
by $\rho u^2$ give us $E_2=\mathcal{O}(\varepsilon)$ in $[0,T_{0,1}]$.
\medskip 

For $t\in [T_{0,i},T_{1,i}]$ we have the following relations
\[\begin{split} 
I_{0,i} =&-\int_{T_{0,i}}^{T_{1,i}}\int_{c(t)-\frac{\varepsilon}{2}
(t-T_{0,i})-x_\varepsilon^{0,i}}^{c(t)+\frac{\varepsilon}{2}(t-T_{0,i})
+x_\varepsilon^{0,i}}\partial_t \rho_\varepsilon(t)\varphi(x,t)\,dx\,dt\\
&-\int_{T_{0,i}}^{T_{1,i}}\big(\rho_0-\rho_\varepsilon(t)\big)
\varphi\Big(c(t)-\frac{\varepsilon}{2}(t-T_{0,i})-x_\varepsilon^{0,i},t\Big)
\Big(c'(t)-\frac{\varepsilon}{2}\Big)dt\\
&-\int_{T_{0,i}}^{T_{1,i}}\rho_\varepsilon(t)
\varphi\Big(c(t)+\frac{\varepsilon}{2}(t-T_{0,i})+x_\varepsilon^{0,i},t\Big)
\Big(c'(t)+\frac{\varepsilon}{2}\Big)dt\\
&+\underbrace{\sum_{k=i}^\infty \int_{T_{0,i}}^{T_{1,i}}
\rho_{k+1}u_{k+1}\Big(\varphi\big(Y_k+u_{k+1}t,t\big)
-\varphi\big(Y_{k+1}+u_{k+1}t,t\big)\Big)\,dt}_{:=A_{0,i}},
\end{split}\]

and
\[
\begin{split} J_{0,i} =&
\int_{T_{0,i}}^{T_{1,i}}\!\big(\rho_0u_0-\rho_\varepsilon(t)
u_\varepsilon(t)\big)\varphi\Big(c(t)-\frac{\varepsilon}{2}(t-T_{0,i})
-x_\varepsilon^{0,i},t\Big)dt\\
&+\int_{T_{0,i}}^{T_{1,i}}\!\rho_\varepsilon(t)
u_\varepsilon(t)\varphi\Big(c(t)+\frac{\varepsilon}{2}
(t-T_{0,i})+x_\varepsilon^{0,i},t\Big)dt\\
&-\underbrace{\sum_{k=i}^\infty \int_{T_{0,i}}^{T_{1,i}}\!\!
\rho_{k+1}u_{k+1}\Big(\varphi\big(Y_k+u_{k+1}t,t\big)
-\varphi\big(Y_{k+1}+u_{k+1}t,t\big)\Big)\,dt.}_{:=B_{0,i}=A_{0,i}}
\end{split}\] 
The proof that $I_{0,i}+J_{0,i}=\mathcal{O}(\varepsilon)$ for $t\in
[T_{0,i},T_{1,i}]$ follows from Lemma \ref{lemma:N1} and the method given
above. Again, we have $E_2=\mathcal{O}(\varepsilon)$ 
in the same interval by following the same arguments.
\medskip 

Finally, let $t\in [T_{1,i},T_{0,i+1}]$. Then 
\[\begin{split} 
I_{1,i} =&-\int_{T_{1,i}}^{T_{0,i+1}}\!
\int_{c(t)-\frac{\varepsilon}{2}(t-T_{1,i})-x_\varepsilon^{1,i}}^{c(t)
+\frac{\varepsilon}{2}(t-T_{1,i})+x_\varepsilon^{1,i}}\partial_t
\rho_\varepsilon(t)\varphi(x,t)\,dxdt\\
&-\int_{T_{1,i}}^{T_{0,i+1}}\!\big(\rho_0-\rho_\varepsilon(t)\big)
\varphi\Big(c(t)-\frac{\varepsilon}{2}(t-T_{1,i})-x_\varepsilon^{1,i},t\Big)
\Big(c'(t)-\frac{\varepsilon}{2}\Big)dt\\
&-\int_{T_{1,i}}^{T_{0,i+1}}\!\big(\rho_\varepsilon(t)-\rho_{i+1}\big)
\varphi\Big(c(t)+\frac{\varepsilon}{2}(t-T_{1,i})+x_\varepsilon^{1,i},t\Big)
\Big(c'(t)+\frac{\varepsilon}{2}\Big)dt\\
&-\int_{T_{1,i}}^{T_{0,i+1}}\!\rho_{i+1}u_{i+1}
\varphi\big(Y_{i+1}+u_{i+1}t,t\big)dt\\
&+\sum_{k=i+1}^\infty \int_{T_{1,i}}^{T_{0,i+1}}\!
\rho_{k+1}u_{k+1}\Big(\varphi\big(Y_k+u_{k+1}t,t\big)
-\varphi\big(Y_{k+1}+u_{k+1}t,t\big)\Big)dt,
\end{split}\] 
and 
\[\begin{split} 
J_{1,i} =& \int_{T_{1,i}}^{T_{0,i+1}}\!\!
\big(\rho_0u_0-\rho_\varepsilon(t)u_\varepsilon(t)\big)
\varphi\Big(c(t)-\frac{\varepsilon}{2}
(t-T_{1,i})-x_\varepsilon^{1,i},t\Big)dt\\
&+\int_{T_{1,i}}^{T_{0,i+1}}\!\!\big(\rho_\varepsilon(t)u_\varepsilon(t)
-\rho_{i+1}u_{i+1}\big)\varphi\Big(c(t)+\frac{\varepsilon}{2}(t-T_{1,i})
+x_\varepsilon^{1,i},t\Big)dt\\
&+\int_{T_{1,i}}^{T_{0,i+1}}\!\!\rho_{i+1}u_{i+1}
\varphi\big(Y_{i+1}+u_{i+1}t,t\big)dt\\
&-\!\sum_{k=i+1}^\infty \!\int_{T_{1,i}}^{T_{0,i+1}}\!\!\!
\rho_{k+1}u_{k+1}\Big(\!\varphi\big(Y_k+u_{k+1}t,t\big)\!
-\!\varphi\big(Y_{k+1}+u_{k+1}t,t\big)\!\Big)dt.
\end{split}\] 
The same arguments as above and Lemma \ref{lemma:N1} imply
$E_1=\mathcal{O}(\varepsilon)$. Proof for $E_2$ is the same.
\medskip 

Note that the proof holds even if $\rho_{i+1}=\rho_0$ for some $i$, since
$(\ref{sol_xi})_2$ implies $\xi'(t)=-\rho_0(u_{i+1}-u_0)$ and the expression
for $c'(t)$ does not have an influence in the proof.

Due to the fact that the test function $\varphi$ 
has a compact support and from $Y_i-Y_{i-1}\geq \sqrt{\varepsilon}$, 
one can see that there are at most 
$\frac{\mathop{\rm const}(\varphi)}{\sqrt{\varepsilon}}$ 
interactions. Thus, $E_1$ and $E_2$ are of order
$\mathcal{O}\big(\frac{1}{\sqrt{\varepsilon}}\big)
\mathcal{O}(\varepsilon)=\mathcal{O}(\sqrt{\varepsilon})$,
$\varepsilon \to 0$.
That proves the existence in the case $\tilde{u}\leq u_0$.
The admissibility of the obtained solution follows from the uniqueness of the
classical solutions and piecewise overcompressibility of the shadow wave 
in each segment. If $\tilde{u}=u_0$, then $y_{0,i+1}\to u_0$
as $i\to \infty$. That is, the overcompressibility implies that
the speed of the shadow wave is close to $u_0$ for $i$ large enough.
\medskip

\noindent (2) If $\tilde{u}>u_0$, then there exists $k\in \mathbb{N}$ such
that $u_{k+1}\geq u_0$ and $u_k<u_0$. Consequently, a curve $x=c(t)$ will
stay in vacuum area between two contact discontinuities emanating from $Y_k$
and the interactions will stop after the interaction point
$(X_{0,k},T_{0,k})$.
\end{proof}
\medskip

\noindent
{\sc Case III}. $u(x)$ is decreasing function for $x>R$ and $u_0\geq
u(R)$

The solution formed at the initial time is a piecewise constant
function with constant states connected by simple shadow waves. 
Each $\text{SDW}_{i,i+1}$
emanates from a point $Y_i$ and joints $(\rho_i,u_i)$ and
$(\rho_{i+1},u_{i+1})$. Thus, all possible cases of interactions are covered
by type (A2). With notation from (\ref{y_{i,k}}) we have
\begin{equation}\label{dif_decr}
\begin{split}
y_{i,k}-y_{k,j}=&\frac{\sqrt{\rho_i}}{\sqrt{\rho_i}\!+\!\sqrt{\rho_k}}\frac{\sqrt{\rho_k}}{\sqrt{\rho_k}\!+\!\sqrt{\rho_j}}(u_i\!-\!u_k)+\frac{\sqrt{\rho_i}}{\sqrt{\rho_i}\!+\!\sqrt{\rho_k}}\frac{\sqrt{\rho_j}}{\sqrt{\rho_k}\!+\!\sqrt{\rho_j}}(u_i\!-\!u_j)\\
&+\frac{\sqrt{\rho_k}}{\sqrt{\rho_i}+\sqrt{\rho_k}}\frac{\sqrt{\rho_j}}{\sqrt{\rho_k}\!+\!\sqrt{\rho_j}}(u_k\!-\!u_j)\geq
0 \text{ for } i<k<j,
\end{split}
\end{equation} since $u(x)$ decreases.
Due to overcompressibility each pair of 
shadow waves is approaching. 
The interaction point $(\tilde{X}_i,\tilde{T}_i)$
between $\text{SDW}_{i-1,i}$ and $\text{SDW}_{i,i+1}$, $i=1,2,\ldots$ is
determined by
\[\tilde{X}_i=Y_{i-1}+y_{i-1,i}\tilde{T}_i=Y_{i}+y_{i,i+1}\tilde{T}_i,\;
\tilde{T}_i=\frac{Y_i-Y_{i-1}}{y_{i-1,i}-y_{i,i+1}}.\]
(Note that $y_{i,i+1}$ is a speed of $\text{SDW}_{i,i+1}$.)
Then $\Delta u_i:=u_{i}-u_{i-1}<{\mathop {\rm const}_{u}} 
\mu(\varepsilon)$, $\sup_{x>R}|u'(x)| \leq \mathop {\rm const}_{u}$, 
since $\Delta Y_i:=Y_i-Y_{i-1}\sim \mu(\varepsilon)=:\mu$, $i=1,2,3,\ldots$.
Relation (\ref{dif_decr}) implies
\[y_{i-1,i}-y_{i,i+1}<(u_{i-1}-u_{i})+(u_{i-1}-u_{i+1})
+(u_i-u_{i+1})=2(u_{i-1}-u_{i+1})<{\mathop {\rm const}_{u}} 
\mu,\]
i.e.\ $\tilde{T}_i=\mathcal{O}(1)$. Note that it
is not possible to determine which interaction takes place the first and if
more then two shadow waves interact at the same time, since the relationship
between $y_{i-1,i}-y_{i,i+1}$ and $y_{i,i+1}-y_{i+1,i+2}$ depends on
$\rho(x)$, too.

If $u_0>u(R)$, the time of interaction between $\text{SDW}_{0,1}$ and
$\text{SDW}_{1,2}$ is of order $\mathcal{O}(\mu)$. So, the first
interaction that occurs is one between $\text{SDW}_{0,1}$ and
$\text{SDW}_{1,2}$. Resulting $\text{SDW}_{0,2}$ propagates until the next
interaction. Due to Lemma \ref{lemma:N1} the solution is
overcompressive for $t\geq 0$.

As in the Case II, 
0-SDW is defined to be the shadow wave connecting all piecewise 
defined $\text{SDW}_{0,i}$, $i=1,2,\ldots$ 
(see $\hat{U}^\varepsilon$ above).
Eventually, 0-SDW will overtake each $\text{SDW}_{i,i+1}$ and any other
shadow wave obtained by their mutual interactions.
All waves except the 0-SDW are called ``small'' shadow waves.
\medskip

\noindent
{\sc Case IV}. $u(x)$ is decreasing function for $x>R$ and $u_0< u(R)$

This case is similar to the previous one. The solution shortly
after the initial time consists of a wave combination
$\text{CD}_1^0+\text{Vac}_{0,1}+\text{CD}_2^1$ emanating from $x=R$
followed by a sequence
$\text{SDW}_{1,2}, \text{SDW}_{2,3},\ldots$. Possible types of interactions
are (A2) and (A3), and the order of interactions cannot be determined in
advance. Note that $\text{CD}_1^0$ does not interact with other waves,
while $\text{CD}_{2}^1$ interacts with some $\text{SDW}_{1,k}$. The
resulting $\;^1 \text{SDW}_{k}$ continues to propagate and collide
with shadow waves approaching from the right since it has a larger speed
than all waves from its right side. That conclusion follows from the
overcompressibility of the solution in each time interval. 

\begin{theorem}[Case III and IV]\label{theorem:global_sol_decr} 
Let $u(x),\rho(x)\in
C_b\big([R,\infty)\big)$, $\rho(x)>0$. Assume that $u(x)$ is decreasing
and let $\rho_0>0$ and $u_0\in\mathbb{R}$. Take a partition
$\{Y_i\}_{i\in\mathbb{N}_0}$ such that $C\sqrt[3]{\varepsilon}\geq Y_{i}-Y_{i-1}\geq
\sqrt[3]{\varepsilon}$, $i=1,2,\ldots$, $C\geq 1$ and $Y_0=R$. For  $\varepsilon >0$ small enough
there exists admissible global solution $U^\varepsilon$ to the
problem (\ref{chRS}, \ref{ri}).
\end{theorem}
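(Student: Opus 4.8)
The plan is to follow the scheme of the proof of Theorem \ref{theorem:global_sol} almost verbatim, so I will only point out the changes forced by the decreasing geometry. First I would fix a small $\varepsilon$, build the piecewise constant initial approximation from the partition $\{Y_i\}$, and record the resulting wave pattern: in Case III every Riemann problem (\ref{chRS}, \ref{initial_class}) is resolved by a simple shadow wave $\text{SDW}_{i,i+1}$ (degenerating to $\text{CD}_{i,i+1}$ whenever $u_i=u_{i+1}$), while in Case IV the problem at $i=0$ produces $\text{CD}_1^0+\text{Vac}_{0,1}+\text{CD}_2^1$ and all later ones produce $\text{SDW}_{i,i+1}$. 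Since $u(x)$ decreases, (\ref{dif_decr}) and overcompressibility show that any two neighbouring shadow waves approach each other, so the only interaction types are (A2) and (A3); in Case IV the wave $\text{CD}_1^0$ never interacts and $\text{CD}_2^1$ is eventually absorbed into some $\text{SDW}_{1,k}$, producing a $\;^1\text{SDW}_k$ with vacuum on its left, which is still covered by Lemma \ref{lemma:N1} (the Lemma allows $\rho_l=0$ or $\rho_r=0$). By Lemma \ref{lemma:N1}, Corollary \ref{coroll:bounds} and (\ref{initial_speed}), every interaction --- including a simultaneous collision of three or more waves in the sense of Remark \ref{rmk:varepsilon-time}, handled by the rule stated before (A1)--(A4) --- is resolved by a \emph{single} overcompressive shadow wave. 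The qualitative subtlety, already flagged in the Case III discussion, is that the order of interactions cannot be predicted in advance; but the construction is insensitive to this precisely because each interaction, whatever its multiplicity and whichever waves enter it, is handled by one and the same rule.

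Next I would set up the weak identities $E_1,E_2\approx 0$ exactly as in the proof of Theorem \ref{theorem:global_sol} and split each integral into contributions over the space-time strips cut out by the consecutive interaction times. On each strip the solution is a concatenation of the distinguished 0-SDW (a weighted shadow wave solving (\ref{ode_lr}) there, carrying the $\hat U^\varepsilon$ structure), finitely many ``small'' shadow waves, the contact discontinuities bounding the untouched data, and --- in Case IV only --- the leftmost $\text{CD}_1^0$ together with its vacuum fan. The computation on a single strip then reproduces the one in the proof of Theorem \ref{theorem:global_sol}: apply the divergence theorem, Taylor-expand $\varphi$ across the $\mathcal{O}(\varepsilon)$-wide shadow layers by (\ref{Taylor}), and cancel the principal terms using the Rankine--Hugoniot relations (\ref{ode_lr}) (and (\ref{ode_initial_sw})--(\ref{oc1}) for the simple small waves); the $\mathcal{O}(\varepsilon^2)$ corner regions, where an interaction is in fact smeared over a time interval of order $\varepsilon$, are discarded by Lemma \ref{lemma:area}. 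The outcome is that every strip contributes $\mathcal{O}(\varepsilon)$ to $E_1$ and to $E_2$.

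It then remains to count the strips that meet ${\rm supp}\,\varphi$. Since ${\rm supp}\,\varphi$ is compact and all wave speeds are bounded by $\sup|u|$, only the waves emanating from the $Y_i$ lying in a fixed bounded interval can ever reach ${\rm supp}\,\varphi$; because $Y_i-Y_{i-1}\ge\sqrt[3]{\varepsilon}$ there are at most $\mathcal{O}(\varepsilon^{-1/3})$ of them, and since the system is $2\times 2$ the number of waves never increases, so these waves undergo at most $\mathcal{O}(\varepsilon^{-1/3})$ interactions. Hence $E_1,E_2=\mathcal{O}(\varepsilon^{-1/3})\cdot\mathcal{O}(\varepsilon)=\mathcal{O}(\varepsilon^{2/3})\to 0$, which proves the existence. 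The upper bound $Y_i-Y_{i-1}\le C\sqrt[3]{\varepsilon}$ is used here as well: together with $\sup|u'|<\infty$ it gives $u_{i-1}-u_i\le\mathop{\rm const}_u C\sqrt[3]{\varepsilon}$ and, through (\ref{dif_decr}), $y_{i-1,i}-y_{i,i+1}\le\mathop{\rm const}_u C\sqrt[3]{\varepsilon}$, so that the first-generation interaction times $\tilde T_i=(Y_i-Y_{i-1})/(y_{i-1,i}-y_{i,i+1})$ are bounded below by a constant independent of $i$ and $\varepsilon$; this prevents the interactions from accumulating near $t=0$ and makes the $\mathcal{O}(\varepsilon^2)$ corner estimate of Lemma \ref{lemma:area} uniform, since the intermediate densities stay of order $\varepsilon^{-1}$. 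Admissibility is inherited as in Theorem \ref{theorem:global_sol}: the classical pieces are the unique entropy solutions of their Riemann problems and every shadow wave in the construction is overcompressive on each strip by Lemma \ref{lemma:N1}; finally, the 0-SDW overtakes each small shadow wave, so the global solution has the structure described in the discussion of Cases III and IV.

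The step I expect to be the main obstacle is the strip-by-strip weak estimate in the regime where the collision pattern is genuinely undetermined and several small shadow waves may merge at once: one must make sure that this estimate does not depend on the pattern of collisions. This is exactly why the algorithm is organized so that every interaction, regardless of multiplicity or of the types of the incoming waves, is resolved by one shadow wave governed by Lemma \ref{lemma:N1} and Corollary \ref{coroll:bounds}, and why the counting argument uses only the monotone decrease of the number of waves and not any description of where and when the collisions occur.
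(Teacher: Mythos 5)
Your proposal follows essentially the same route as the paper: strip-by-strip decomposition at the interaction times, per-wave-front $\mathcal{O}(\varepsilon)$ estimates via Lemma \ref{lemma:N1} and the Taylor expansion (\ref{Taylor}), a count of interactions from the lower bound $Y_i-Y_{i-1}\geq\sqrt[3]{\varepsilon}$, and admissibility from piecewise overcompressibility. The one slip is the per-strip accounting: unlike Case II, each strip here contains up to $\mathcal{O}(\varepsilon^{-1/3})$ coexisting small shadow waves whose $\mathcal{O}(\varepsilon)$ errors do not cancel, so a strip contributes $\mathcal{O}(\varepsilon^{2/3})$ rather than $\mathcal{O}(\varepsilon)$, and the grand total is $\mathcal{O}(\varepsilon^{1/3})$; this still tends to zero, so the conclusion is unaffected.
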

\begin{proof} Denote by
$$S_0^{III}:=\{(\rho_i,u_i):\,i=0,1,2,\ldots\}$$ the set of initial
states for the case $u_0\geq u(R)$ and by
$$S_0^{IV}:=\{(\rho_0,u_0), \text{Vac}_{0,1}\}\cup
\{(\rho_i,u_i):\,i=1,2,\ldots\}$$ the set of initial states for the case
$u_0<u(R)$. Denote by $I_0^{\ast}=\{0,1,2,3,\ldots\}$ the
initial set of indexes corresponding to $S_0^{\ast}$,
$\ast\in\{III,IV\}$ as above. The analysis below is the same for both cases.

Suppose that an interaction occurs at $t=T_k$ for 
waves corresponding to states in $S_{k-1}^{\ast}$. 
A new set of states $S_k^\ast$ is constructed by eliminating all
the middle ones in interactions. The new set of indexes is now denoted by
$I_k^{\ast}=\{0,k_1^{\ast},k_2^{\ast},k_3^{\ast},\ldots\}$ 
where $1\leq k_1^\ast<k_2^\ast<k_3^\ast<\ldots$.

Let us prove that
\[\begin{split} \int_{0}^\infty\!\!\!\int_{-\infty}^\infty\!\!
\Big((\rho^{\varepsilon}\partial_t\varphi)(x,t)\!+\!(\rho^{\varepsilon}
u^{\varepsilon}\partial_x\varphi)(x,t)\Big)dxdt
\!+\!\int_{-\infty}^{\infty}(\rho^{\varepsilon}\varphi)(x,0)dx &\approx 0\\
\int_{0}^\infty\!\!\!\int_{-\infty}^\infty\!\! \Big(\!(\rho^{\varepsilon}
u^{\varepsilon}\partial_t\varphi)(x,t)\!+\!(\rho^{\varepsilon}
(u^{\varepsilon})^2\partial_x\varphi)(x,t)\!\Big)dxdt\!
+\!\int_{-\infty}^{\infty}\!\!(\rho^{\varepsilon}
u^{\varepsilon}\varphi)(x,0)dx &\approx 0,
\end{split}\]
for any $\varphi\in C_0^\infty\big(\mathbb{R}\times [0,\infty)\big)$

Again, values
$x_\varepsilon\sim\varepsilon$ are chosen such that the sum of strengths of
incoming waves is equal to the initial strength of outgoing shadow
wave (we use Lemma \ref{lemma:N1}). We proceed in the same way as in the
proof of Theorem \ref{theorem:global_sol}. \\ Put
\[\begin{split} Q_k &:=
\int_{T_k}^{T_{k+1}}\!\!\int_{-\infty}^\infty \rho^{\varepsilon} \,\partial_t
\varphi\,dxdt+ \int_{T_k}^{T_{k+1}}\!\!\int_{-\infty}^\infty
\rho^{\varepsilon}u^\varepsilon \,\partial_x \varphi\,dxdt.
\end{split}\]

Then,
\[\begin{split} \int_{0}^{\infty}\!\!\int_{-\infty}^\infty \rho^{\varepsilon}
\,\partial_t \varphi\,dxdt+ \int_{0}^{\infty}\!\!\int_{-\infty}^\infty
\rho^{\varepsilon}u^\varepsilon \,\partial_x \varphi\,dxdt=\sum_{k=0}^\infty Q_k.
\end{split}\]

It is enough to prove
$\varepsilon-$bounds for $Q_k$ due to Lemma \ref{lemma:N1}.
The sum is finite because
$\mathop{\rm supp} \varphi$ is compact.
Take two successive $i,j$ from $I_k^{\ast}$. There exists a
shadow wave or a contact discontinuity with the states corresponding to
indexes $i,j$. If it is a shadow one, denote 
its speed and strength by $u_s(t)$ and $\xi(t)$, respectively.
The intermediate state is denoted by
$(\rho_\varepsilon(t),u_\varepsilon(t))$. Then,
$Q_k$ is a sum of terms
\[\begin{split}
C_{i,j}^k:=\int_{T_{k}}^{T_{k+1}}\Big(A_{i,j}(t)-B_{i,j}(t)\Big)dt
+\int_{T_{k}}^{T_{k+1}}\!\!\int_{c(t)-\frac{\varepsilon}{2}(t-T)
-x_\varepsilon}^{c(t)+\frac{\varepsilon}{2}(t-T)+x_\varepsilon}\partial_t
\rho_\varepsilon(t)\varphi(x,t)\,dxdt,
\end{split}\]
where
\[\begin{split}
A_{i,j}(t):=&
(\rho_{i}-\rho_{\varepsilon}(t))\varphi\Big(c(t)-\frac{\varepsilon}{2}(t-T)
-x_\varepsilon,t\Big)\Big(c'(t)-\frac{\varepsilon}{2}\Big)\\
&+(\rho_{\varepsilon}(t)-\rho_{j})\varphi\Big(c(t)
+\frac{\varepsilon}{2}(t-T)+x_\varepsilon,t\Big)
\Big(c'(t)+\frac{\varepsilon}{2}\Big)\\
B_{i,j}(t):=& (\rho_{i}
u_{i}-\rho_{\varepsilon}(t)u_{\varepsilon}(t))\varphi\Big(c(t)
-\frac{\varepsilon}{2}(t-T)-x_\varepsilon,t\Big)\\
&+(\rho_{\varepsilon}(t)u_{\varepsilon}(t)-\rho_{j}u_{j})
\varphi\Big(c(t)+\frac{\varepsilon}{2}(t-T)+x_\varepsilon,t\Big),
\end{split}\]
for each pair $i,j$ of sequential indexes.
After some calculations analogous to the ones performed
in the proof of Theorem \ref{theorem:global_sol},
\[\begin{split}
C_{i,j}^k=&\int_{T_{k}}^{T_{k+1}}2\bigg(\partial_t \rho
_\varepsilon(t)\Big(\frac{\varepsilon}{2}(t-T)+x_\varepsilon\Big)
+\frac{\varepsilon}{2}\rho_\varepsilon(t)\bigg)\varphi\big(c(t),t\big)dt\\
&+\int_{T_{k}}^{T_{k+1}}\Big((\rho_{i}-\rho_{j})c'(t)
-(\rho_{i}u_{i}-\rho_{j}u_{j})\Big)\varphi\big(c(t),t\big)dt\\
&+\int_{T_{k}}^{T_{k}}2\rho
_\varepsilon(t)\big(c'(t)-u_\varepsilon(t)\big)
\Big(\frac{\varepsilon}{2}(t-T)-x_\varepsilon\Big)
\partial_x\varphi\big(c(t),t\big)dt+
\mathcal{O}(\varepsilon)=\mathcal{O}(\varepsilon).
\end{split}\]
If
$u_i,u_j$ are connected by a contact discontinuity, then $C_{i,j}^k=0$.
The condition $\Delta Y_i\geq \sqrt[3]{\varepsilon}$ ensures that at most
$\frac{\mathop{\rm const}(\varphi)}{\sqrt[3]{\varepsilon}}$ interactions occur,
since a test function $\varphi$ has a compact support. For the same
reason, solution in the interval $[T_k,T_{k+1}]$ consists of at
most $\frac{\mathop{\rm const}(\varphi)}{\sqrt[3]{\varepsilon}}$ wave fronts.
Then
\[\begin{split} Q_k =\frac{\mathop{\rm const}(\varphi)}
  {\sqrt[3]{\varepsilon}}
  \mathcal{O}(\varepsilon)=\mathcal{O}(\sqrt[3]{\varepsilon^2})
\text{ as } 
  \varepsilon \to 0.
\end{split}\]

The proof for the second equation
goes analogously ($\rho^{\varepsilon}$ is replaced by 
$\rho^{\varepsilon} u^{\varepsilon}$
and $\rho^{\varepsilon} u^{\varepsilon}$ by 
$\rho^{\varepsilon} (u^{\varepsilon})^{2}$). 
Admissibility of a solution follows from the
overcompressibility in each time interval $[T_{k},T_{k+1}]$. That concludes
the proof.
\end{proof}

\subsection{The general case}\label{sec:monot}

Suppose that a function $u(x)$ has a finite number of local extremes. We
will give a short analysis of the cases when a function has only one local
extremum. Cases when function changes monotonicity more than one
time can be treated in the same way.

\begin{figure}
\centering
	\begin{subfigure}[b]{0.49\textwidth}
	\begin{center}
    \includegraphics[width=5.6cm]{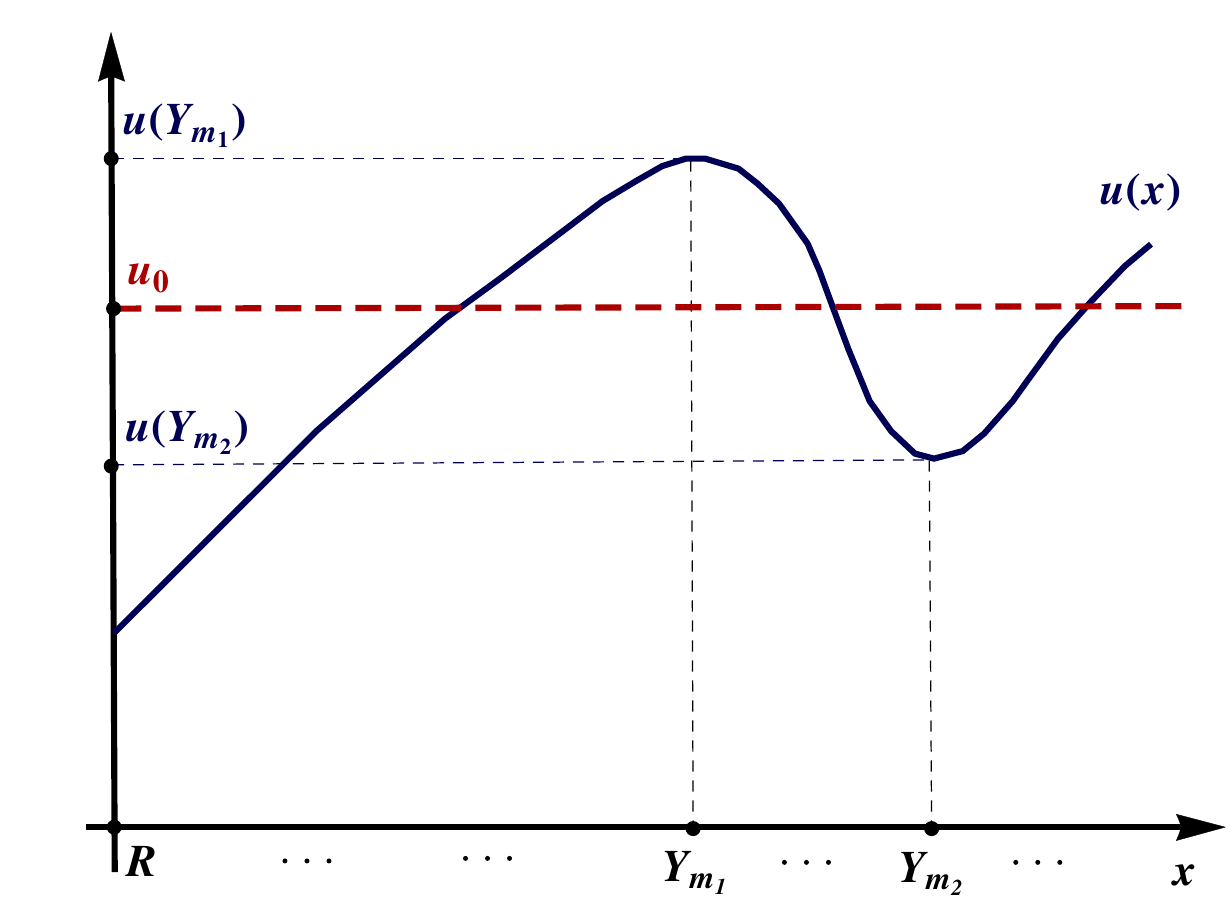}
    \caption{Sketch of $u(x)$}
	\label{f:a}
	\end{center}
	\end{subfigure}
	%~
    \begin{subfigure}[b]{0.49\textwidth}
    \begin{center}
    \includegraphics[width=5.6cm]{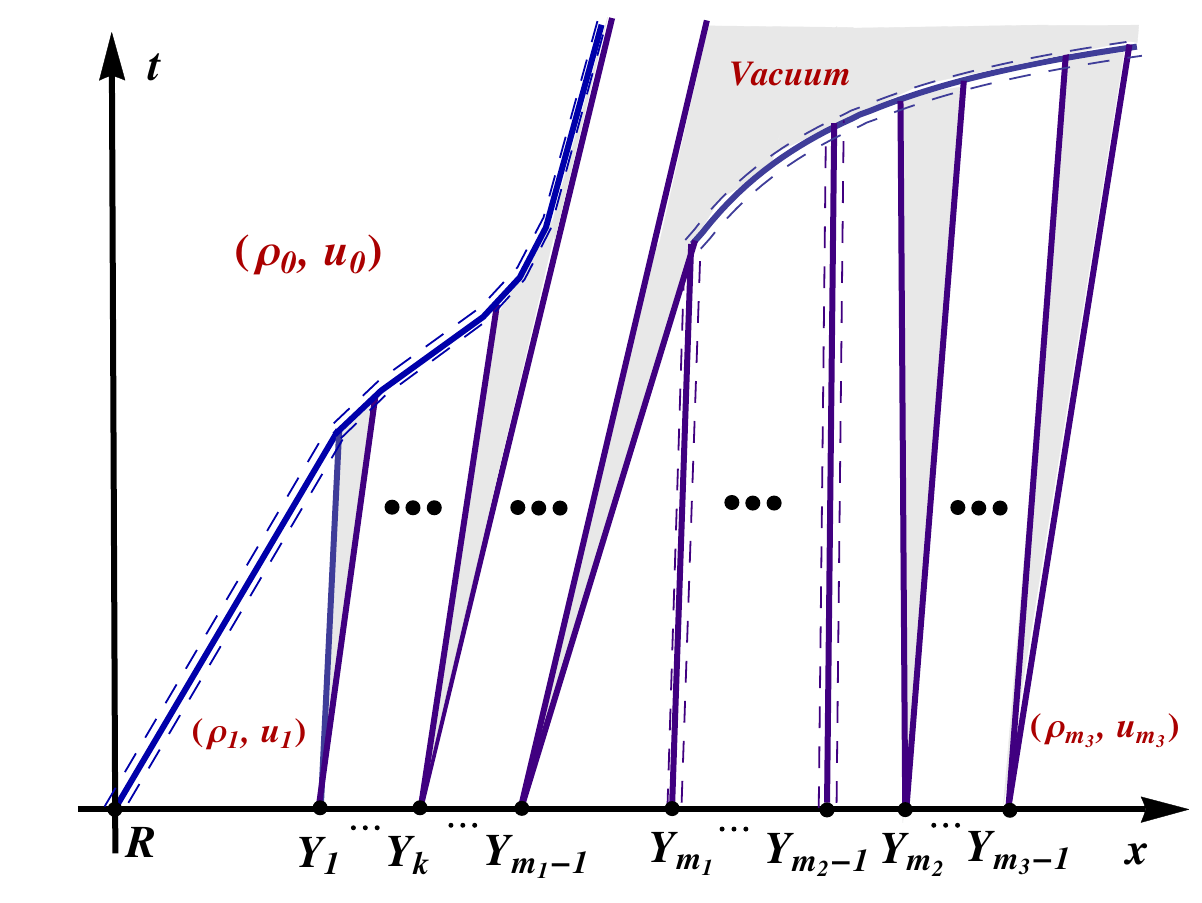}
    \caption{Sketch of interactions}
	\label{f:b}
	\end{center}
	\end{subfigure}
	\caption{Function $u(x)$  changes monotonicity}
	\label{f:incr_decr}
\end{figure}

Suppose that a local maximum of a piecewise constant approximation is
reached at a  point  $Y_{m_1}\in \{Y_i\}_{i\in
\mathbb{N}}$. 
If $u_0\leq u(R)$, the solution before the first
interaction consists of the combinations CD+Vac+CD
which do not interact with each other.
The last of them emanates from the point $(Y_{m_1-1},0)$. Starting from
the point $(Y_{m_1},0)$,  the solution is like
in Case III with $R=Y_{m_1}$ and $u_0=u(Y_{m_1})$.
Those waves continue to propagate until the first interaction. 
If $u_0>u(R)$, the solution
before it is a combination of waves obtained in 
Cases II and III. Unlike the case $u_0\leq u(R)$, the wave front
propagating from $(R,0)$ is $\text{SDW}_{0,1}$. 
Figure \ref{f:incr_decr} illustrates that case.
%On Figure \ref{f:incr_decr} one can see an illustration of that case. 

Similarly, if $u^{\varepsilon}(x)$ has a local minimum at 
$Y_{m_1}$ and $u_0\geq u(R)$,
the solution before the first interaction is a combination of 
shadow waves and contact discontinuities (Cases II and III): 
A sequence $\{\text{SDW}_{i,i+1}\}_{i=0}^{m_1-1}$ is 
followed by a sequence of wave combinations CD+Vac+CD.
If $u_0< u(R)$, a $\text{CD}_1^0+\text{Vac}_{0,1}+\text{CD}_2^1$
emanates from $(R,0)$ instead of  $\text{SDW}_{0,1}$ (Cases II and IV). 
\medskip

The proof of the following theorem will be omitted since
technical details are combined in proofs of 
Theorems \ref{theorem:global_sol} and \ref{theorem:global_sol_decr}.

\begin{theorem}[Global existence]
\label{theorem:global_sol_mon} Suppose that $u(x),\rho(x)\in
C_b\big([R,\infty)\big)$. Moreover, suppose that $u(x)$ has a finite
number of local extremes and that $\rho(x)>0$. Let $\rho_0>0$,
$u_0\in\mathbb{R}$ and consider a partition
$\{Y_i\}_{i\in\mathbb{N}_0}$, $Y_0=R$ such that $C\sqrt[3]{\varepsilon}\geq
Y_{i}-Y_{i-1}\geq \sqrt[3]{\varepsilon}$, $i=1,2,\ldots$, $C\geq 1$. For 
$\varepsilon>0$ small enough there exists an admissible global solution 
$U^\varepsilon$ to (\ref{chRS}, \ref{ri}) (in the approximated sense).
\end{theorem}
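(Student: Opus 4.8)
The plan is to reduce the statement to the monotone situations already settled in Theorems \ref{theorem:global_sol} and \ref{theorem:global_sol_decr}. Since $u(x)$ has only finitely many local extremes, the partition $\{Y_i\}_{i\in\mathbb{N}_0}$ splits $[R,\infty)$ into finitely many blocks on each of which $u^\varepsilon$ is monotone; on an increasing block the Riemann solutions are the classical $\text{CD}_1+\text{Vac}+\text{CD}_2$ combinations (or single contact discontinuities), and on a decreasing block they are the simple shadow waves $\text{SDW}_{i,i+1}$. Thus the configuration at $t=0^{+}$ is exactly the one obtained by gluing the pictures of Cases I--IV along the extremum points, as described in the paragraphs preceding the statement (a local maximum spawns a decreasing-block head wave on its right and an increasing-block tail on its left, a local minimum the opposite, with the initial wave at $(R,0)$ being $\text{SDW}_{0,1}$ or $\text{CD}_1^0+\text{Vac}_{0,1}+\text{CD}_2^1$ according to whether $u_0>u(R)$ or $u_0\le u(R)$). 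First I would record that, with these initial data, every interaction that can ever occur --- inside a block, at a junction of two blocks, or among waves that have already merged across several blocks --- has extreme left and right states of the form $(\rho,u)$ with $\rho>0$ or $\text{Vac}$, hence is of one of the types (A1)--(A4) and, by Lemma \ref{lemma:N1}, produces a single outgoing shadow wave.

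Next I would run the weak-formulation estimate verbatim from the proof of Theorem \ref{theorem:global_sol_decr}. Fixing $\varphi\in C_0^\infty(\mathbb{R}\times[0,\infty))$ and letting $0=T_0<T_1<\dots$ be the interaction times, I would write
\[
\int_0^\infty\!\!\int_{-\infty}^\infty\big(\rho^\varepsilon\partial_t\varphi+\rho^\varepsilon u^\varepsilon\partial_x\varphi\big)\,dx\,dt+\int_{-\infty}^\infty(\rho^\varepsilon\varphi)(x,0)\,dx=\sum_{k}Q_k,
\]
with $Q_k$ the integral over the strip $[T_k,T_{k+1}]$; the contributions at the times $T_k$ cancel because $U^\varepsilon$ is constructed to be $t$-continuous, and the initial term is absorbed into $Q_0$. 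Inside a strip, $Q_k$ is a finite sum of quantities $C_{i,j}^k$, one per wave front between consecutive states: $C_{i,j}^k=0$ for a contact discontinuity, and $C_{i,j}^k=\mathcal{O}(\varepsilon)$ for a shadow front by the Taylor expansion \eqref{Taylor} together with the $\delta$- and $\delta'$-balance relations of Lemma \ref{lemma:N1}, exactly as in the displayed computation in Theorem \ref{theorem:global_sol_decr}. The momentum equation is handled by the same computation with $\rho^\varepsilon,\rho^\varepsilon u^\varepsilon$ replaced by $\rho^\varepsilon u^\varepsilon,\rho^\varepsilon(u^\varepsilon)^2$.

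It then remains to count. Because $\mathrm{supp}\,\varphi$ is compact and $Y_i-Y_{i-1}\ge\sqrt[3]{\varepsilon}$, at most $\mathcal{O}(\varepsilon^{-1/3})$ partition points lie under the support, so there are at most $\mathcal{O}(\varepsilon^{-1/3})$ interaction times and at most $\mathcal{O}(\varepsilon^{-1/3})$ wave fronts in any strip; the finitely many monotonicity changes do not affect this bound. Hence $\sum_k Q_k=\mathcal{O}(\varepsilon^{-1/3})\,\mathcal{O}(\varepsilon)=\mathcal{O}(\varepsilon^{2/3})\to0$, i.e. the first weak identity holds, and likewise the second; the conditions $\rho^\varepsilon(x,0)\approx\rho(x,0)$, $u^\varepsilon(x,0)\approx u(x,0)$ are immediate from the piecewise constant approximation of the initial data. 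Admissibility is obtained by induction on the interaction number: the initial shadow waves are overcompressive by \eqref{oc1}, and by Corollary \ref{coroll:bounds} together with \eqref{initial_speed} an interaction of overcompressive waves yields an overcompressive wave, while the classical pieces are the unique entropy solutions of their Riemann problems.

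The step I expect to be the main obstacle is the combinatorial bookkeeping at the junctions between consecutive monotone blocks: one must enumerate the possible head/tail wave pairs emanating from a local maximum and from a local minimum (and the extra cases at $x=R$), and check that each such pair --- and each wave produced by subsequent mergers, including those that slide into or out of a vacuum region as in part (2) of Theorem \ref{theorem:global_sol} --- interacts with its neighbours only through (A1)--(A4). Once this finite case check is done, the analytic estimates above are literally those of Theorems \ref{theorem:global_sol} and \ref{theorem:global_sol_decr}, so I would present the argument as that case check followed by a reference to those two theorems for the estimates and for the overcompressibility propagation.
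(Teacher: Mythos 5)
Your proposal is correct and is essentially the argument the paper intends: the authors explicitly omit the proof of Theorem \ref{theorem:global_sol_mon} on the grounds that its technical details are a combination of the proofs of Theorems \ref{theorem:global_sol} and \ref{theorem:global_sol_decr}, which is exactly the reduction you carry out (gluing the monotone blocks, checking that every interaction at a junction is of type (A1)--(A4), running the strip-by-strip estimate $Q_k=\mathcal{O}(\varepsilon^{-1/3})\mathcal{O}(\varepsilon)$, and propagating overcompressibility via Corollary \ref{coroll:bounds} and (\ref{initial_speed})). The junction case check you flag as the main obstacle is precisely what the paper's Subsection~\ref{sec:monot} handles informally before stating the theorem.
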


\begin{remark}
	One could easily check that the above statement also holds true 
	for (\ref{PGD}) when $e(x)\in C_b\big([R,\infty)\big)$ is positive.
	The energy variable does not have an influence on the approximated 
	solution behaviour.
\end{remark}

\section{Entropy dynamics and dissipation of energy}\label{sec:energy}

In the case of  system  (\ref{PGD}), the semi-convex entropy pair is given by
\begin{equation}\label{entropy3}
\eta(\rho,u,e)=\rho(R(u)+S(e)),\, Q(\rho,u,e)=\rho(R(u)+S(e)),
\end{equation}
where $R''(x)\geq 0$, $S'(x)\leq 0$ and $S''(x)\geq 0$ for each $x$ (see \cite{mn2010}).
The constructed solution should satisfy the entropy inequality 
$\partial_{t} \eta +\partial_{x} Q\leq 0$. 
Physically, it means that the mathematical entropy cannot increase. This condition is necessary and sufficient condition for uniqueness of $3\times 3$ pressureless gas dynamics system.

For the pressureless gas dynamics system (\ref{chRS}) it is known that using  semi-convex entropy pairs
$(\eta,Q)$ is not sufficient to extract a proper solution (we have to use overcompressibility). We will examine a dynamics of the physical energy and its flux,
\begin{equation}\label{energy_pair}
\eta(\rho,u)=\frac{1}{2}\rho u^2,\, Q(\rho,u)=\frac{1}{2}\rho u^3.\end{equation}
 
%Because of that, the entropy production measure defined in \cite{CD_1973} for general conservation law systems  will be called the energy dissipation rate here.
%Because of that, we will call the entropy production
%measure defined for general conservation law systems in \cite{CD2012}, 
%the energy dissipation rate here.

If shadow wave connecting states $U_l$ and $U_r$
emanating at the time $t=T_1$ satisfies the entropy condition, then
\begin{equation*} %\label{entropy_sdw}
D_{l,r}(t):=-u_s(t)[\eta]+[Q]+\lim_{\varepsilon \to
0}\frac{d}{dt}\Big(\big(\varepsilon (t-T_1)+2x_\varepsilon\big)
\eta\big(U_\varepsilon(t)\big)\Big) \leq 0,
\end{equation*} 
as proved in \cite{mn2010}.
There is the second entropy condition given in 
the same paper, but it is always satisfied here
due to the fact that $u_{\varepsilon}(t)\approx u_{s}(t)$.
The overcompresibility implies $D_{l,r}(t)\leq 0$.

Here $D_{l,r}(t)$ is consistent with entropy production measure defined in  \cite{CD_1973} for general conservation law systems possessing bounded variation solutions.  We will call it the entropy production  across the
$\text{SDW}_{l,r}$ at time $t\geq T_1$. 
%In the case of the pair (\ref{energy_pair}) the condition $P_{l,r}(t)\leq 0$ means that the energy is dissipative. 
Denote by 
\[\mathcal{E}(t)=\int_{-M}^M \eta(U(x,t))dx\]
the total entropy at time $t$ of a solution $U(x,t)$. Here $M>0$ is taken to be large enough to avoid the total entropy being infinite in finite time.
\begin{theorem}\label{thm_entropy_decr}
Consider the system (\ref{chRS}) (or (\ref{PGD})). The total entropy  decreases after the interaction between two shadow waves.
\end{theorem}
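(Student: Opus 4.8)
The plan is to compare the total entropy $\mathcal{E}$ immediately before and immediately after the collision time $T$: the classical (bounded) part of the solution is continuous across $T$, so the whole change sits in the singular part, and that change turns out to be exactly minus the kinetic energy lost when the two carried masses merge inelastically.

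First I would make precise what the singular part contributes to $\mathcal{E}(t)=\int_{-M}^M\eta(U(x,t))\,dx$. For a shadow wave with intermediate state $(\rho_\varepsilon(t),u_\varepsilon(t))$ on a layer of width $\sim\varepsilon$ and $\rho_\varepsilon(t)\sim\varepsilon^{-1}$, the contribution of that layer to $\int\tfrac12\rho_\varepsilon u_\varepsilon^2\,dx$ converges to $\tfrac12\,\xi(t)\,u_s^2(t)$, where $\xi$ is the strength and $u_s$ the speed; this is the natural notion of the energy carried by the wave. In smooth regions one has $\partial_t\eta+\partial_x Q=0$ pointwise (a direct computation from the mass equation together with $\partial_t u+u\partial_x u=0$), so that off the interaction times $\mathcal{E}$ changes only through the shadow-wave lines, where the rate is $D_{l,r}(t)\le0$ by overcompressibility; hence it suffices to control the jump at $T$.

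Next, consider the local picture near the collision. The two incoming central lines $x=c_l(t)$ and $x=c_r(t)$ meet at $X:=c_l(T)=c_r(T)$, so the middle state between them — either a constant state or a vacuum state — fills an interval whose length tends to $0$ as $t\to T$; since $\eta$ is bounded on constant states and $\eta\equiv0$ on vacuum, this region contributes nothing to $\mathcal{E}(T-0)$. The outer states (the left state of $W_1$ and the right state of $W_m$, in the notation of the Algorithm) evolve continuously through $T$ by locality of the interaction, so their integrals over $[-M,X]$ and $[X,M]$ take the same value just before and just after $T$. Consequently
\[
\mathcal{E}(T+0)-\mathcal{E}(T-0)=\tfrac12\,\xi(T)u_s^2(T)-\tfrac12\,\xi_l(T)u_{s_l}^2(T)-\tfrac12\,\xi_r(T)u_{s_r}^2(T),
\]
where $\xi_l,\xi_r$ and $u_{s_l},u_{s_r}$ are the strengths and speeds of the two incoming shadow waves at $t=T$ and $\xi,u_s$ those of the resulting one.

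Finally I would substitute the two conservation relations that define the resulting wave, $\xi(T)=\xi_l(T)+\xi_r(T)$ from (\ref{initial_strength}) and $u_s(T)=\bigl(\xi_l(T)u_{s_l}(T)+\xi_r(T)u_{s_r}(T)\bigr)/\bigl(\xi_l(T)+\xi_r(T)\bigr)$ from (\ref{initial_speed}), into the display above. A one-line algebraic identity then yields
\[
\mathcal{E}(T+0)-\mathcal{E}(T-0)=-\frac12\,\frac{\xi_l(T)\,\xi_r(T)}{\xi_l(T)+\xi_r(T)}\,\bigl(u_{s_l}(T)-u_{s_r}(T)\bigr)^2\le0,
\]
which is the claim; it is strict unless $u_{s_l}(T)=u_{s_r}(T)$, and it is precisely the kinetic-energy loss of a perfectly inelastic collision, in agreement with the sticky-particle picture. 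The same computation with $m$ incoming shadow waves gives $-\tfrac12\bigl(\sum_i\xi_iu_{s_i}^2-(\sum_i\xi_iu_{s_i})^2/\sum_i\xi_i\bigr)\le0$ by the weighted Jensen inequality. The step needing the most care is the bookkeeping at the collision — justifying that the shrinking middle region and the $\mathcal{O}(\varepsilon^2)$ neglected interaction area contribute nothing in the limit, and that the energy of a shadow wave is well defined as $\tfrac12\,\xi\,u_s^2$; the rest is the elementary identity above.
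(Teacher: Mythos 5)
Your argument for system (\ref{chRS}) with the kinetic--energy entropy (\ref{energy_pair}) is correct and is essentially the paper's own proof: the jump of the total entropy at the collision time reduces to $\tfrac12\big(\xi(T)u_s^2(T)-\xi_l(T)u_{s_l}^2(T)-\xi_r(T)u_{s_r}^2(T)\big)$, and substituting (\ref{initial_strength}) and (\ref{initial_speed}) gives exactly $-\tfrac12\,\xi_l(T)\xi_r(T)\xi(T)^{-1}\big(u_{s_l}(T)-u_{s_r}(T)\big)^2\le 0$. Your extra bookkeeping (continuity of the regular part, the vanishing middle region, the identification of the carried energy as $\tfrac12\xi u_s^2$) and the $m$-wave Jensen remark are fine and, if anything, more careful than the published argument.

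The one genuine omission is that the theorem is stated for (\ref{chRS}) \emph{or} (\ref{PGD}), and for the $3\times3$ system the relevant entropies are the semi-convex pairs (\ref{entropy3}), $\eta=\rho(R(u)+S(e))$ with $R''\ge0$, $S'\le0$, $S''\ge0$ --- not just the kinetic energy. Your computation does not cover this case, and it needs one additional ingredient: the update rule for the internal energy of the outgoing wave,
\begin{equation*}
e_s(T)=\frac{\xi_l(T)}{\xi(T)}e_{s_l}(T)+\frac{\xi_r(T)}{\xi(T)}e_{s_r}(T)
+\frac{1}{2}\frac{\xi_l(T)\xi_r(T)}{\xi^2(T)}\big(u_{s_l}(T)-u_{s_r}(T)\big)^2,
\end{equation*}
which follows from continuity of the total energy (the third conservation law, cf.\ (\ref{third})): the kinetic energy you showed is lost reappears as internal energy. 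Then $R(u_s(T))\le$ the $\xi$-weighted average of $R(u_{s_l}(T)),R(u_{s_r}(T))$ by convexity (your Jensen step), while for the $S$-part one first uses $S'\le0$ together with $e_s(T)\ge$ the weighted average of $e_{s_l}(T),e_{s_r}(T)$, and then $S''\ge0$, to conclude $S(e_s(T))\le$ the weighted average of $S(e_{s_l}(T)),S(e_{s_r}(T))$. Without the monotonicity of $S$ the extra positive term in $e_s(T)$ would work against you, so this step cannot be skipped.
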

\begin{proof}
Suppose that two shadow wave interact at time $t=T$. One from the left $\text{SDW}_{l,m}$ propagates with speed $u_{s_l}(t)$ and strength $\xi_l(t)$, while the right one $\text{SDW}_{m,r}$ propagates with speed $u_{s_r}(t)$ and  strength $\xi_r(t)$. (The corresponding specific internal energies are denoted by $e_{s_l}(t)$ and $e_{s_r}(t)$.) The speed and the strength of the resulting shadow wave $\text{SDW}_{l,r}$  are denoted by $u_s(t)$ and $\xi(t)$ (and the internal energy is $e_s(t)$).
The total entropy at time $t<T$ is given by $\mathcal{E}^-(t)$, while the total entropy at time $t>T$ (across $\text{SDW}_{l,r}$) is $\mathcal{E}^+(t)$. 

Consider first the system (\ref{chRS}) and entropy pair (\ref{energy_pair}). Then 
\[\begin{split}
\mathcal{E}^{+}(T+0)-\mathcal{E}^{-}(T-0) & =\frac{1}{2}\big(\xi(T)u_{s}^{2}(T)-\xi_{l}(T)u_{s_l}^{2}(T)-\xi_{r}(T)u_{s_r}^{2}(T)\big)\\
&= -\frac{1}{2}\frac{\xi_l(T)\xi_r(T)}{\xi(T)}(u_{s_l}(T)-u_{s_r}(T))^2\leq 0.
\end{split}\]
The above inequality follows from relations (\ref{initial_strength}) and (\ref{initial_speed}).

In the case of system (\ref{PGD}) and entropy pair (\ref{entropy3}) we have
\[\begin{split}
\mathcal{E}^{+}(T+0)-\mathcal{E}^{-}(T-0)  = &\xi(T)\Big(R(u_s(T))-\frac{\xi_{l}(T)}{\xi(T)}R(u_{s_l}(T))-\frac{\xi_{r}(T)}{\xi(T)}R(u_{s_r}(T))\\
&+S(u_s(T))-\frac{\xi_{l}(T)}{\xi(T)}S(u_{s_l}(T))-\frac{\xi_{r}(T)}{\xi(T)}S(u_{s_r}(T))\Big).
\end{split}\]
Using the relation
\[e_s(T)=\frac{\xi_l(T)}{\xi(T)}e_{s_l}(T)+\frac{\xi_r(T)}{\xi(T)}e_{s_r}(T)+\frac{1}{2}\frac{\xi_l(T)\xi_r(T)}{\xi^2(T)}(u_{s_l}(T)-u_{s_r}(T))^2\]
which follows from the continuity of energy across the interaction time,
and conditions imposed on functions $R$ and $S$, we get
\[\begin{split}
R(u_s(T)) &\leq \frac{\xi_{l}(T)}{\xi(T)}R(u_{s_l}(T))+\frac{\xi_{r}(T)}{\xi(T)}R(u_{s_r}(T))\\
S(e_s(T)) & \leq S\Big(\frac{\xi_l(T)}{\xi(T)}e_{s_l}(T)+\frac{\xi_r(T)}{\xi(T)}e_{s_r}(T)\Big)\\
 &\leq \frac{\xi_{l}(T)}{\xi(T)}S(u_{s_l}(T))+\frac{\xi_{r}(T)}{\xi(T)}S(u_{s_r}(T)).
\end{split}\]
Since $\xi(T)>0$ we have $\mathcal{E}^{+}(T+0)-\mathcal{E}^{-}(T-0) \leq 0$.
\end{proof}
\begin{remark}
The interaction between shadow wave and contact discontinuity can be treated as a special case of Theorem \ref{thm_entropy_decr}. It is enough to take the strength of the wave corresponding to contact discontinuity equal to zero. Then the entropy is constant across the interaction time.
\end{remark}

If the total entropy across 
 $\text{SDW}_{l,r}$ at time $t\geq T_1$ is denoted by $\mathcal{E}_{l,r}(t)$, the entropy rate is given by $\frac{d}{dt}\mathcal{E}_{l,r}(t)$ and the following relation holds
\begin{equation}\label{rel_sdw}
\frac{d}{dt} \mathcal{E}_{l,r}(t)=D_{l,r}(t)+Q(U_l)-Q(U_r).
\end{equation}

For (\ref{chRS}) and energy-entropy pair (\ref{energy_pair})  we can explicitly calculate the energy production,
\begin{equation}\label{D}
\begin{split}
D_{l,r}(t)
&=-\frac{1}{2}\big(\rho_l(u_l-u_s(t))^3+\rho_r(u_s(t)-u_r)^3\big)=:E(t).
\end{split}\end{equation}
The condition $D_{l,r}(t)\leq 0$ means that the energy is dissipative.
The value $\frac{d}{dt} \mathcal{E}_{l,r}(t)$ is called the energy dissipation rate   across 
 $\text{SDW}_{l,r}$ at time $t\geq T_1$.
Let 
\[\begin{split} A(t)&:=[\rho]u_s^2(t)-2[\rho u]u_s(t)+[\rho
u^2]=\rho_r(u_s(t)-u_r)^2-\rho_l(u_l-u_s(t))^2.
\end{split}\]
If
$\rho_l\neq \rho_r$, then $u_s'(t)=-A(T_1)\frac{\gamma^2}{\xi^3(t)}$ and
\[D_{l,r}'(t)=-\frac{3}{2}u_s'(t)A(t)
  =\frac{3}{2}\frac{\gamma^2}{\xi^3(t)}A(t)A(T_1).\]
Using $\xi(t)>0$ and the fact that $u_s(T_1) \geq y_{l,r}$ ($u_s(T_1) 
< y_{l,r}$) implies $u_s(t) \geq y_{l,r}$
($u_s(t) < y_{l,r}$, respectively) as proved in Lemma \ref{lemma:N1}, 
we get $D_{l,r}'(t)\geq 0$, $t>T_1$.
If $\rho_l=\rho_r\neq 0$, then
\[u_s'(t)=-2\frac{\gamma^2}{\xi^3(t)}
\Big(c-\frac{u_l+u_r}{2}\Big)\rho_l(u_l-u_r),\]
and $D'_{l,r}(t)\geq 0$. If $\rho_l=\rho_r=0$, then
$D_{l,r}(t)=D_{l,r}'(t)=0$ for $t>T_1$. So, 
$D_{l,r}(t)$ is non-positive and increasing function of time.  For a contact discontinuity $D_{l,r}$ is
equal to 0 (energy is conserved). 
Also, it is constant for a simple shadow wave since $u_s(t)$ 
does not depend on $t$.

If $Y_r-Y_l<C\sqrt[3]{\varepsilon}$ as needed in 
Theorem \ref{theorem:global_sol_mon}, 
then $D_{l,r}(t)=\mathcal{O}(\varepsilon)$. That means that dissipation
 across a small shadow wave is negligible.

Consider an interaction between $\text{SDW}_{l,r}$ and $\text{CD}_1^r$ at
$t=T_1$. The total energy production before the interaction equals
\[D_1(T_1-0)=-\frac{1}{2}\big(\rho_l(u_l-u_s(T_1-0))^3
+\rho_r(u_s(T_1-0)-u_r)^3\big),\]
and
\[D_1(T_1+0)=\frac{1}{2}\rho_l (u_s(T_1+0)-u_l)^3\]
after it. The speed continuity, $u_s(T_1-0)=u_s(T_1+0)$ implies
\[\Delta
D_1(T_1):=D_1(T_1+0)-D_1(T_1-0)=\frac{1}{2}\rho_r(u_s(T_1)-u_r)^3>0.\]
The resulting $\text{SDW}_l^r$ further interacts with $\text{CD}_2^{r+1}$ at
time $t=T_2$. Then
\[D_2(T_2-0)=\frac{1}{2}\rho_l (u_s(T_2-0)-u_l)^3\]
and
\[D_2(T_2+0)=-\frac{1}{2}\big(\rho_l(u_l-u_s(T_2+0))^3
+\rho_{r+1}(u_s(T_2+0)-u_{r+1})^3\big).\]
Thus,
\[\Delta
D_2(T_2)=-\frac{1}{2}\rho_{r+1}(u_s(T_2)-u_{r+1})^3<0.\] 

Let us consider an interaction between $\text{SDW}_{l,m}$ and
$\text{SDW}_{m,r}$ now. The $\text{SDW}_{l,m}$ propagates with a speed
$u_{s1}(t)$ and a strength $\xi_1(t)$, while $\text{SDW}_{m,r}$ propagates
with a speed $u_{s2}(t)$ and a strength $\xi_2(t)$. The initial speed of the
resulting $\text{SDW}_{l,r}$ equals
$u_s:=u_s(T)=\alpha u_{s1}+(1-\alpha)u_{s2}$, where
$\alpha:=\frac{\xi_1(T-0)}{\xi_1(T-0)+\xi_2(T-0)}$ and
$u_{si}:=u_{si}(T-0)$, $i=1,2$.
Then
\[\begin{split}
D(T-0)=&-\frac{1}{2}\big(\rho_m(u_{s1}-u_{m})^3+\rho_l(u_l-u_{s1})^3+\rho_r(u_{s2}-u_r)+\rho_m(u_m-u_{s2})^3\big)\\
D(T+0) =&-\frac{1}{2}\big(\rho_r(u_s-u_r)^3+\rho_l(u_l-u_s)^3\big)\\
\Delta D(T)=&-\frac{1}{2}(u_{s1}-u_{s2})\Big(\alpha
\rho_r\big((u_s-u_r)^2+(u_s-u_r)(u_{s2}-u_r)+(u_{s2}-u_r)^2\big)\\
&-\rho_m\big((u_{s1}-u_m)^2-(u_{s1}-u_m)(u_m-u_{s2})+(u_m-u_{s2})^2\big)\\
&+(1-\alpha)\rho_l\big((u_l-u_s)^2+(u_l-u_s)(u_{l}-u_{s1})+(u_{l}-u_{s1})^2\big)\Big).
\end{split}\]
Note that the sign of $\Delta D(t)$ depends on $\rho(x)$
and $u(x)$.
\begin{example} Suppose that $u(x)$ is a decreasing function, $u_0>u(R)$ and
$\rho(x)=\rho_0$ for each $x>R$. A simple $\text{SDW}_{i,i+1}$ emanating at the
$x-$axis  propagates with speed $y_{i,i+1}=\frac{u_{i}+u_{i+1}}{2}$
for every $i$. The result of an interaction at $t=T$ between 
$\text{SDW}_{i,i+1}$ and
$\text{SDW}_{i+1,i+2}$ is a new  $\text{SDW}_{i,i+2}$ with the constant speed
and strength given by
\[y_{i,i+2}=\frac{u_i+u_{i+2}}{2},\;
\xi_{i,i+2}t=\rho_0(u_i-u_{i+2})t,\,t\geq T.\]

It can be proved by an induction
that a solution in this case is piecewise constant function, with the
constant states connected by simple shadow waves, i.e.\ all jumps are
located along straight lines.
The energy production across $\text{SDW}_{l,m}$ and $\text{SDW}_{m,r}$ 
before and  after their interaction at $t=T$ is given by
\[D(T-0)=-\frac{\rho_l}{8}\big((u_l-u_m)^3+(u_m-u_r)^3\big),\;
D(T+0)=-\frac{\rho_0}{8}(u_l-u_r)^3.\]
Thus, in this case the energy dissipation rate decreases after the interaction (that follows from (\ref{rel_sdw})),
\[\Delta
D(T)=-\frac{3}{8}\rho_l(u_l-u_r)(u_l-u_m)(u_m-u_r)<0.\]
That is, the solution dissipates more energy after the interaction.
\end{example} 
When the pressure vanishes the entropy relation for gases $kTdS=dU+pdV,$ where $k$ is constant, $T$ is  a temperature, $S$ is an entropy, $U$ is an internal energy and $V$ is a volume, reduces to $S(e)={\rm const}\, e$ for a fixed temperature. Thus, let us put  $\eta(\rho,u,e)=-\rho e,$ $Q(\rho,u,e)=-\rho u e$ for the system (\ref{PGD}).
Then the entropy production is given by
\[\begin{split}
D_{l,r}(t) &=-u_s(t)(-\rho_r u_r+\rho_l u_l)+(-\rho_r u_r e_r+\rho_l u_l e_r) - \frac{d}{dt}(\xi(t)e_s(t))\\
&= \rho_r e_r(u_s(t)-u_r)+\rho_l e_l(u_l-u_s(t))-\frac{d}{dt}(\xi(t)e_s(t)).
\end{split}\]
Combining (\ref{ode_lr}) and (\ref{third}) it can be easily proved that 
\[\frac{d}{dt}(\xi(t)e_s(t))=-E(t)+\rho_r e_r(u_s(t)-u_r)+\rho_l e_l(u_l-u_s(t)),\]
 where $E(t)$ is defined in (\ref{D}). So, we have $D_{l,r}(t)=E(t),$ as for the $2\times 2$ system and the previous analysis also holds for (\ref{PGD}).

\section{Existence of a measure valued limit}\label{sec:limit}

A natural choice for a function space corresponding to our 
solution is the space of signed Radon measures due to the presence of delta
function. Radon measures are Borel regular and
locally finite measures, and can be understood as distributions of zero order.

We shall use the fact that for every signed measure $M$ there exist unique
nonnegative mutually singular measures $M^+$ and $M^-$ such that
$M=M^+-M^-$. Measures $M^+$ and $M^-$ are called positive and
negative variations of $M$ and $M=M^+-M^-$ is Jordan decomposition
of $M$ (see \cite{Conway} for details). The nonnegative measure
$|M|=M^{+}+M^{-}$ is called variation of $M$. The Riesz's
representation theorem gives the following characterization of the space of
signed Radon measures whose positive and negative variations are Radon
measures.
% \cite{Conway} strana 122, Theorem 4.3.8
\begin{definition} \label{radonm} A space of signed Radon measures
$\mathcal{M}(\Omega)$ consists of linear forms $M$ defined on
$C_{0}(\Omega)$ such that for every compact set $K \subset \Omega$ there
exists a constant $C_{K}$ such that
\[ |\langle M,\varphi \rangle | \leq
C_{K} \|\varphi \|_{L^{\infty}}\; \text{ for all } \varphi \in
C_{0}(\Omega), \; \operatorname{supp}(\varphi) \subset K. \]

Denote by $\mathcal{M}_{f}(\Omega)$ the space of signed Radon
measures with a finite mass, i.e.\ $M\in\mathcal{M}_{f}(\Omega)$ if
there exist a constant $C$ such that
\[ |\langle M,\varphi \rangle |
\leq C \|\varphi \|_{L^{\infty}}\; \text{ for all } \varphi \in
C_{0}(\Omega). \]
\end{definition}

\begin{proposition}[Proposition 2.5.\ from
  \cite{dl}]\label{prop:dl} 
  Let $\{M_{\nu}\}_{\nu\in \mathbb{N}_0}$ be a sequence of
nonnegative uniformly locally bounded measures. Then there exists its
subsequence still denoted by $\{M_{\nu}\}_{\nu\in \mathbb{N}_0}$ 
and a Radon measure $M$ such
that $M_{\nu}\overset{\ast}{\rightharpoonup}M$.
\end{proposition}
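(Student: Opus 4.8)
The plan is to deduce the statement from the Riesz representation theorem by extracting a diagonal subsequence that converges when tested against a countable dense family of functions. First I would fix an exhaustion $K_1\subset K_2\subset\cdots$ of the underlying locally compact, $\sigma$-compact domain $\Omega$ by compact sets with $K_j\subset\operatorname{int}K_{j+1}$ and $\bigcup_j K_j=\Omega$. By hypothesis there are constants $C_j$ with $\sup_{\nu}M_\nu(K_j)\le C_j$. Since each $C_0(\operatorname{int}K_j)$ is separable, one can pick a countable family $\mathcal D=\{\varphi_k\}_{k\in\mathbb N}\subset C_0(\Omega)$ which is dense, in the $L^\infty$-norm, in $C_0(\operatorname{int}K_j)$ for every $j$. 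For each fixed $k$, if $\operatorname{supp}\varphi_k\subset K_j$ then $|\langle M_\nu,\varphi_k\rangle|\le\|\varphi_k\|_{L^\infty}C_j$, so the numerical sequence $(\langle M_\nu,\varphi_k\rangle)_\nu$ is bounded.

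Next I would perform a Cantor diagonal argument over $k$, passing to successive subsequences so that along the final diagonal subsequence (still denoted $\{M_\nu\}$) the limit $\ell(\varphi_k):=\lim_\nu\langle M_\nu,\varphi_k\rangle$ exists for every $k$. Then I would upgrade this to all of $C_0(\Omega)$: given $\varphi\in C_0(\Omega)$ with $\operatorname{supp}\varphi\subset K_j$ and $\delta>0$, choose $\varphi_k\in\mathcal D$ with $\operatorname{supp}\varphi_k\subset\operatorname{int}K_{j+1}$ and $\|\varphi-\varphi_k\|_{L^\infty}<\delta$; then for all large $\mu,\nu$,
\[
|\langle M_\mu-M_\nu,\varphi\rangle|\le|\langle M_\mu-M_\nu,\varphi_k\rangle|+2C_{j+1}\delta<3C_{j+1}\delta .
\]
Hence $(\langle M_\nu,\varphi\rangle)_\nu$ is Cauchy and converges; define $L(\varphi):=\lim_\nu\langle M_\nu,\varphi\rangle$. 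The functional $L$ is linear, it is positive because $\varphi\ge0$ forces $\langle M_\nu,\varphi\rangle\ge0$, and it satisfies $|L(\varphi)|\le C_j\|\varphi\|_{L^\infty}$ whenever $\operatorname{supp}\varphi\subset K_j$, so $L$ is a positive linear functional of the type described in Definition \ref{radonm}.

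Finally, I would apply the Riesz representation theorem to $L$: there is a nonnegative Radon measure $M$ on $\Omega$ with $L(\varphi)=\int_\Omega\varphi\,dM$ for all $\varphi\in C_0(\Omega)$. By construction $\langle M_\nu,\varphi\rangle\to\langle M,\varphi\rangle$ for every such $\varphi$, i.e.\ $M_\nu\overset{\ast}{\rightharpoonup}M$, which is the assertion. The step I expect to be the main obstacle is the passage from the countable family to arbitrary test functions: the family $\mathcal D$ must be chosen with support control (dense in $C_0(\operatorname{int}K_j)$ for each $j$, not merely in $C_c(\Omega)$ globally), and the Cauchy estimate must exploit the \emph{uniform} local bound $\sup_\nu M_\nu(K_{j+1})\le C_{j+1}$ — a bound depending on $\nu$ would not close the argument. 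The remaining ingredients (the diagonalization, positivity, and the appeal to Riesz) are routine.
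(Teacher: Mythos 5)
Your argument is correct and is the standard proof of this compactness statement: diagonal extraction over a countable family that is sup-norm dense with support control, the uniform local mass bound to pass to arbitrary test functions, and the Riesz representation theorem to identify the limit functional with a Radon measure. The paper itself offers no proof — it simply cites Proposition 2.5 of De Lellis \cite{dl} — and your write-up reproduces essentially the same argument as that reference, including the two points you correctly flag as the only delicate ones (support-controlled density of the countable family, and uniformity in $\nu$ of the local bounds).
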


\begin{theorem}[Existence of a weak limit]\label{thm:exist_limit} 
Suppose that $u(x),\rho(x)\in C_b\big([R,\infty)\big)$, $\rho(x)>0$,
$\rho_0>0$ and $u(x)$ 
having a finite number of local extremes.  Take any sequence
$\{\varepsilon_{\nu}\}_{\nu \in \mathbb{N}_0},$ $\varepsilon_\nu\to 0+$ 
satisfying 
$\sqrt[3]{\varepsilon_{\nu}}\leq Y_i-Y_{i-1}\leq 
C\sqrt[3]{\varepsilon_\nu}$, $C\geq 1$ for partition
$\{Y_{i}^\nu\}_{i\in \mathbb{N}_{0}}$
corresponding to $\varepsilon_{\nu}$. 
Denote $\{U^{\nu}\}_{\nu \in \mathbb{N}_{0}}$ a corresponding sequence
of solutions to problem (\ref{chRS}, \ref{ri}) constructed as in
Theorem \ref{theorem:global_sol_mon}.
There exists a subsequence still denoted by
$\{U^{\nu}\}_{\nu\in \mathbb{N}_0}$
and a signed Radon measure $U^{\ast}$ such that $U^{\nu}$
converges weakly to $U^{\ast}$ as $\nu \to \infty$.
\end{theorem}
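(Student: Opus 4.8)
The plan is to realize $U^\nu=(\rho^\nu,\rho^\nu u^\nu)$ as a pair of (signed) measures and to apply Proposition~\ref{prop:dl} after reducing to nonnegative measures. First I would fix the structure: for each $\nu$, the approximate solution $U^\nu$ is, at each time $t$, a bounded piecewise-$C^1$ function in $x$ plus a weighted shadow-wave part (the $0$-SDW together with finitely many ``small'' shadow waves) whose singular contribution, in the distributional limit, is a finite sum of delta masses along Lipschitz curves with bounded strengths. Interpreting $U^\nu$ as an element of $\mathcal{M}(\mathbb{R}\times[0,\infty))$ (or, if one prefers, as a measure-valued map $t\mapsto U^\nu(\cdot,t)$), the first task is to establish a \emph{uniform local mass bound}: for every compact $K\subset\mathbb{R}\times[0,\infty)$ there is $C_K$, independent of $\nu$, with $|\langle \rho^\nu,\varphi\rangle|\le C_K\|\varphi\|_{L^\infty}$ and similarly for $\rho^\nu u^\nu$.

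The second step is to prove that bound. For the regular part this is immediate: $\rho^\nu$ and $u^\nu$ are built from the sampled values $\rho(Y_i),u(Y_i)$, which are bounded by $\|\rho\|_\infty$ and $\|u\|_\infty$ since $\rho,u\in C_b$; on any strip $[-M,M]\times[0,T]$ the regular part contributes at most $2M\|\rho\|_\infty$ to the mass, and $2M\|\rho\|_\infty\|u\|_\infty$ for the momentum. For the singular part one uses Corollary~\ref{coroll:bounds}: each shadow wave has strength $\xi(t)$ satisfying $\xi(t)\le \gamma+\max\{\rho_l,\rho_r\}(u_l-u_r)t$, and its speed stays in $[u_r,u_l]\subset[-\|u\|_\infty,\|u\|_\infty]$, so the total delta mass inside $[-M,M]$ at time $t\le T$ is controlled by the initial total variation of the data plus a linear-in-$T$ term, uniformly in $\nu$; the momentum of the singular part is then bounded by $\|u\|_\infty$ times that. (Here one uses that at each interaction the strengths \emph{add}, cf.~\eqref{initial_strength}, so no mass is created, and that the number of interacting waves meeting the strip is finite.) This gives the required uniform $C_K$.

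The third step is the extraction. Write each component of $U^\nu$ via its Jordan-type decomposition into nonnegative pieces: $\rho^\nu\ge 0$ is already a nonnegative measure; for $\rho^\nu u^\nu$ split $u^\nu=u^{\nu,+}-u^{\nu,-}$ into positive and negative parts, so $\rho^\nu u^\nu = \rho^\nu u^{\nu,+}-\rho^\nu u^{\nu,-}$ is a difference of nonnegative measures, each uniformly locally bounded by the Step~2 estimate with $\|u\|_\infty$ in place of $u$. Apply Proposition~\ref{prop:dl} to each of these finitely many nonnegative sequences (diagonalizing over the at most two relevant sequences), obtaining a common subsequence along which $\rho^\nu\overset{\ast}{\rightharpoonup}\rho^\ast$, $\rho^\nu u^{\nu,\pm}\overset{\ast}{\rightharpoonup}\mu^\pm$ in $\mathcal M$; set $U^\ast=(\rho^\ast,\mu^+-\mu^-)$, a signed Radon measure, and weak-$*$ convergence $U^\nu\rightharpoonup U^\ast$ follows by linearity of the pairing against $\varphi\in C_0$.

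The main obstacle is Step~2, specifically obtaining the mass bound that is genuinely \emph{uniform in $\nu$}. The delicate point is that as $\varepsilon_\nu\to0$ the partition refines and the number of shadow waves and interactions grows (like $\varepsilon_\nu^{-1/3}$ by the counting already used in Theorems~\ref{theorem:global_sol} and~\ref{theorem:global_sol_decr}); one must check that summing the individual strength bounds from Corollary~\ref{coroll:bounds} over all these waves still yields something bounded independently of $\nu$. The right way to see this is that the total singular mass at time $t$ inside a fixed spatial window is a \emph{telescoping} quantity --- it equals the net inflow of $\rho$-mass carried from the regular data into the shadow fronts, which is controlled by $\int |\partial_x\rho|\,dx$ plus $\|\rho\|_\infty\|u\|_\infty\, t$ type terms and is manifestly $\nu$-independent --- rather than a naive sum of $\varepsilon_\nu^{-1/3}$ separately-bounded pieces. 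Once this monotone/telescoping accounting is set up, the remaining steps are routine applications of the cited compactness proposition.
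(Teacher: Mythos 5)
Your proposal is correct and follows essentially the same route as the paper: a uniform local mass bound obtained from conservation of mass together with the finite propagation speed (Lemmas \ref{lemma:speed_bound}--\ref{lemma:lbound_meas}), a Jordan-type splitting into nonnegative measures, and extraction of a weak-$\ast$ convergent subsequence via Proposition \ref{prop:dl}. The only cosmetic difference is that you work with the momentum $\rho^\nu u^\nu$ where the paper uses the velocity component $u^\nu$ directly (which is uniformly bounded, making its treatment trivial); your telescoping/mass-conservation resolution of the ``many small waves'' issue is exactly the paper's argument.
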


To prove the existence of a limit $U^{\ast}$ we have to show that the
components of $|U^\nu|:=(\rho^\nu,|u^\nu|)$ are uniformly locally bounded
measures for each $\nu\in \mathbb{N}_0$. 
Note that $|\rho^\nu|=\rho^\nu$ since $\rho^\nu$
is nonnegative. The proof will rely on three lemmas given in the sequel.

\begin{remark} Note that we will not emphasize that
$U^\nu$, as well as $U^\ast$ are vector-valued measures since one can
easily distinguish vector from scalar valued measures.
\end{remark}

\begin{lemma}[Finite propagation speed]\label{lemma:speed_bound}
  Suppose that $\rho(x)$ and $u(x)$ are continuous and bounded 
  functions and $u(x)$ has a finite number of local extremes. 
  Then a speed of any wave which is part of an admissible solution to 
  problem (\ref{chRS}, \ref{ri}) is bounded.
\end{lemma}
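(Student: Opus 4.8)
The plan is to enumerate the finitely many wave types that can occur in the admissible solution and to bound the speed of each of them by $\|u\|_{L^\infty}$ of the initial data. Since $u(x)$ has a finite number of local extremes, the half line $[R,\infty)$ decomposes into finitely many maximal intervals of monotonicity, so the construction of Section~\ref{sec:analysis} applies piece by piece (Cases I--IV and their combinations, cf.\ Theorem~\ref{theorem:global_sol_mon}); consequently every wave that ever appears in the solution is either a contact discontinuity $\mathrm{CD}_{i,i+1}$, one of the fronts $\mathrm{CD}_1^i$ or $\mathrm{CD}_2^{i+1}$ bounding a vacuum region, or a shadow wave of one of the four types $\mathrm{SDW}_{i,j}$, $\mathrm{SDW}_i^j$, $\,^i\mathrm{SDW}_j$, $\,^i\mathrm{SDW}^j$. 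Put $m:=\min\{u_0,\inf_{x\ge R}u(x)\}$ and $M:=\max\{u_0,\sup_{x\ge R}u(x)\}$, which are finite by boundedness of $u$. A first observation, read off from the Algorithm in Section~\ref{sec:algorithm}, is that the $u$-component of every endpoint state of every wave --- a sampled value $u_i=u(Y_i)$, or the velocity $u_i(x,t)$ of a vacuum state, which is a continuous interpolant of two consecutive sampled values --- always lies in $[m,M]$, no matter how many interactions have occurred, because an interaction only discards middle states and never manufactures a new exterior state.

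Next I would treat each type. A contact discontinuity $\mathrm{CD}_{i,i+1}$ moves with the constant speed $u_i$, and the fronts $\mathrm{CD}_1^i$, $\mathrm{CD}_2^{i+1}$ with the constant speeds $u_i$, $u_{i+1}$; all of these belong to $[m,M]$, while the vacuum fan between them carries no front. For a shadow wave joining a left state with $u$-component $u_l$ to a right state with $u$-component $u_r$ one has $c'(t)=u_s(t)$ (the $\delta'$-terms balance, see Lemma~\ref{lemma:N1}), and the overcompressibility established in Lemma~\ref{lemma:N1} and Corollary~\ref{coroll:bounds} gives $u_r\le u_s(t)\le u_l$ throughout the wave's lifespan: this is condition~(\ref{oc1}) for the simple shadow waves formed at $t=0$; it is preserved at interactions because the resulting initial speed~(\ref{initial_speed}) is a convex combination of the incoming speeds and Corollary~\ref{coroll:bounds} keeps the wave overcompressive afterwards; and for waves adjacent to or inside a vacuum it follows from~(\ref{sol_sdw_l^r}) (again $c'(t)=u_s(t)\in[u_r,u_l]$, with constant speed when $\rho_l=\rho_r=0$). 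Combining this with the first observation, $u_s(t)\in[m,M]$ for every shadow wave, and hence $|c'(t)|\le\max\{|m|,|M|\}$ for every wave in the solution, which is the assertion.

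The only point requiring care --- and the one I would spell out in detail --- is the inductive claim that after an arbitrary number of interactions the endpoint $u$-values of any surviving wave are still among the sampled values $u(Y_i)$ or their vacuum interpolants, so that the bound $u_s(t)\in[m,M]$ genuinely propagates through the whole construction; everything else is a direct application of Corollary~\ref{coroll:bounds}. I expect this bookkeeping to be the main (though routine) obstacle, since it must account for all interaction patterns (A1)--(A4) and for the weighted shadow waves with non-constant speed. Note that the resulting bound is \emph{uniform} in $\varepsilon$ --- it depends only on $\|u\|_{L^\infty}$ --- which is precisely what is needed for the weak-compactness argument of Theorem~\ref{thm:exist_limit}.
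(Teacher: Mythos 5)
Your proof is correct and follows essentially the same (very short) argument the paper gives: every contact discontinuity moves with a sampled value of $u$, and every shadow wave is overcompressive, so its speed is sandwiched between adjacent $u$-values, all of which lie in $\big[\min\{u_0,\inf_{x\geq R}u(x)\},\max\{u_0,\sup_{x\geq R}u(x)\}\big]$. The extra bookkeeping you flag (that interactions only discard middle states, so exterior $u$-values stay among the sampled ones) is exactly the point the paper leaves implicit.
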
 

The proof of the above Lemma is straightforward.
Each shadow wave is overcompressive,
while a speed of each contact discontinuity is constant that equals to
a value of $u(x)$ at some point $x>R$. Thus,  
the propagation speed is between
$\min\big\{u_0,\inf_{x\geq R}u(x)\big\}$ and 
$\max\big\{u_0,\sup_{x\geq R} u(x)\big\}$.

\begin{lemma}\label{lemma:strength_bound} 
  Let $U^{\nu}$ be the admissible solution to (\ref{chRS}, \ref{ri}), 
  with $u(x)$ and $\rho(x)$ satisfying the assumptions from 
  the previous lemma. Then 
  \[\begin{split}
      \underbrace{\inf_{i \in \mathbb{N}_{0}}\xi_i t|_{t=0}}_{=0}
      < \xi(t) < 
      \underbrace{\sup_{i \in \mathbb{N}_{0}}\xi_i t|_{t=0}}_{=0} +  
      \overline{\rho}
      \Big(\max\big\{u_0,\sup_{x\geq R}u(x)\big\}
      -\min\big\{u_0,\inf_{x\geq R}u(x)\big\}\Big) t,
  \end{split}\] 
  for some $\overline{\rho}$, 
  where $\xi_i t$ is the strength of $i$-th wave emerging at the initial time,
  $i\in \mathbb{N}_{0}$.
\end{lemma}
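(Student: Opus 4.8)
The plan is to prove the two inequalities separately. Put $\overline{\rho}:=\sup\{\rho_0,\sup_{x\ge R}\rho(x)\}$, $u_{\max}:=\max\{u_0,\sup_{x\ge R}u(x)\}$, $u_{\min}:=\min\{u_0,\inf_{x\ge R}u(x)\}$ and $\Lambda:=u_{\max}-u_{\min}$; all of these are finite by the $C_b$ hypotheses, and by Lemma \ref{lemma:speed_bound} every wave of the solution propagates with a speed in $[u_{\min},u_{\max}]$. For the lower bound, recall that a contact discontinuity has coinciding outer states, so a genuine shadow wave appears --- whether at $t=0$ as a simple $\text{SDW}_{i,i+1}$ or at an interaction --- only with $u_l>u_r$ on its two outermost states; formulas (\ref{sol_xi}) and (\ref{sol_sdw_l^r}) together with overcompressibility then give $\xi(s)>0$ for every $s$ strictly after the creation time of the wave. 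Since $\xi'(s)=\rho_r(u_s(s)-u_r)+\rho_l(u_l-u_s(s))\ge0$ by overcompressibility, and since at every interaction the strength of the outgoing wave is the sum of the strengths of the incoming ones by (\ref{initial_strength}), the strength along the lineage of a shadow wave is nondecreasing; hence $0<\xi(t)$.

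For the upper bound, fix $t>0$ and a shadow wave $W$ present at time $t$. Call a wave an ancestor of $W$ if $W$ descends from it through a chain of interactions, and observe the structural fact that at each time $s\in[0,t]$ the ancestors of $W$ present at time $s$ occupy a spatially contiguous block: a non-ancestor wave located between two ancestor waves would necessarily be consumed by the interaction that eventually merges those two lineages, contradicting that it is not an ancestor. Let $x=\alpha(s)$ and $x=\beta(s)$ be the trajectories of, respectively, the leftmost and rightmost ancestor wave present at time $s$. By Lemma \ref{lemma:speed_bound} the functions $\alpha,\beta$ are Lipschitz with $\alpha'(s),\beta'(s)\in[u_{\min},u_{\max}]$ between interactions and continuous across them, $\alpha(0)=Y_a$ and $\beta(0)=Y_b$ are the positions of the extreme initial ancestors, and $\alpha(t)=\beta(t)=c(t)$ because by time $t$ everything has merged into $W$; in particular, since both $\alpha$ and $\beta$ travel from their initial positions to $c(t)$ with speeds in $[u_{\min},u_{\max}]$, we get $Y_b-Y_a\le(u_{\max}-u_{\min})t=\Lambda t$.

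By conservation of mass --- the first equation of system (\ref{chRS}) --- the mass carried by the delta part of $W$ at time $t$ cannot exceed the $\rho$-mass initially contained in $[Y_a,Y_b]$ (which is zero on the shadow-wave part, the shadow waves carrying no mass at $t=0$) plus the total $\rho$-mass that crosses the trajectories $x=\alpha(s)$ and $x=\beta(s)$ from outside the block during $[0,t]$. The rate at which mass crosses in from the left is $\rho_l(u_l-\alpha'(s))$ and from the right is $\rho_r(\beta'(s)-u_r)$, where $(\rho_l,u_l)$, $(\rho_r,u_r)$ are the states immediately outside the block; both quantities are nonnegative by overcompressibility of the extreme ancestor waves and bounded above by $\overline{\rho}\,\Lambda$. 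Hence
\[
\xi(t)\le \int_{Y_a}^{Y_b}\rho(x,0)\,dx+2\,\overline{\rho}\,\Lambda\,t\le \overline{\rho}\,(Y_b-Y_a)+2\,\overline{\rho}\,\Lambda\,t\le 3\,\overline{\rho}\,\Lambda\,t,
\]
which is the claimed estimate after renaming $3\overline{\rho}$ as $\overline{\rho}$ (the statement only requires some constant). The delicate point is the merge bookkeeping --- verifying that the ancestor family stays contiguous, so that "the block swept by the ancestors of $W$" is genuinely an interval with two well-defined boundary trajectories, and that the mass flux across each of them is governed by the finite propagation speed of Lemma \ref{lemma:speed_bound}; once this is granted, the estimate is just the mass balance for (\ref{chRS}).
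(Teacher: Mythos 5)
Your proof is correct, but it is organized quite differently from the paper's. The paper's argument is recursive and local in time: it combines the additivity of strengths at each interaction, $\xi(T+0)=\sigma_l+\sigma_r$ from (\ref{initial_strength}), with the growth estimate $(\ref{bounds_speed_strength})_2$ of Corollary \ref{coroll:bounds} between interactions; the upper bound then follows because the velocity jumps $u_l-u_r$ across merging waves telescope, so the inductive bound $\xi(t)\leq\overline{\rho}\,(u_l-u_r)\,t\leq\overline{\rho}\,\Lambda t$ propagates through the whole interaction tree with the sharp constant $\overline{\rho}=\max\{\rho_0,\sup_{x\geq R}\rho(x)\}$. You instead integrate the mass balance over the space--time ``ancestor cone'' of the wave, using contiguity of the ancestor set, finite propagation speed to bound the width $Y_b-Y_a\leq\Lambda t$ of its base, and overcompressibility to bound the boundary fluxes. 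This is a legitimate global repackaging of the same two ingredients (the ODE $\xi'=\rho_l(u_l-u_s)+\rho_r(u_s-u_r)$ \emph{is} the mass balance at the front, and additivity at interactions is conservation across the merge), and it has the virtue of making the bookkeeping through arbitrarily many interactions completely explicit, at the price of the extra structural claim about contiguity of ancestors (true here, since only spatially consecutive waves merge) and of a lossy constant $3\overline{\rho}$ --- which the statement tolerates, since it only asks for ``some $\overline{\rho}$''. Note that your own computation recovers the sharp constant if you keep the identity $Y_b-Y_a=\int_0^t\big(\alpha'(s)-\beta'(s)\big)\,ds$ instead of the crude bound $\Lambda t$: then the three contributions combine into $\overline{\rho}\int_0^t\big(u_l-u_r\big)\,ds\leq\overline{\rho}\,\Lambda t$.
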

\begin{proof} 
Define $\bar{\rho}:=\max\big\{\rho_0,\sup_{x\geq R} 
\rho(x)\big\}$. Let $c_{l}$ (or $c_{r}$) and 
$\sigma_{l}$ (or $\sigma_{l}$) be a speed and a strength 
of an incoming wave from left (or right). Let $t=T$ be a time of 
the interaction. Then, the initial speed and
the strength of the resulting wave are
\[c:=u_s(T+0)=\frac{\sigma_lc_l+\sigma_r c_r}{\sigma_l+\sigma_r}, \;
\sigma:=\xi(T+0)=\sigma_l+\sigma_r.\]
The global bounds for a strength of any wave propagating at time $t$
follow from estimate (\ref{bounds_speed_strength})$_{2}$.
\end{proof}

\begin{lemma}\label{lemma:lbound_meas} Suppose that all assumptions of
Theorem \ref{thm:exist_limit} hold. Denote by
$\{U^\nu\}_{\nu\in\mathbb{N}_0}$ the sequence
defined in that theorem.  Then $\rho^\nu$ and $|u^\nu|$ are (nonnegative)
uniformly locally bounded measures for each $\nu\in\mathbb{N}_{0}$.
\end{lemma}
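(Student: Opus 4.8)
The plan is to verify, for an arbitrary compact rectangle $K=[-M,M]\times[0,T_\ast]\subset\mathbb{R}\times[0,\infty)$, that $\rho^\nu(K)$ and $|u^\nu|(K)$ are bounded by a constant independent of $\nu$; by Definition \ref{radonm} this is precisely the statement that $\{\rho^\nu\}_\nu$ and $\{|u^\nu|\}_\nu$ are uniformly locally bounded, and Proposition \ref{prop:dl} then applies (componentwise) in the proof of Theorem \ref{thm:exist_limit}. First I would localize: by the finite propagation speed of Lemma \ref{lemma:speed_bound}, every wave meeting $K$ originates from an initial point in the fixed interval $[R,M'']$ with $M'':=M+T_\ast S$, $S:=\max\{|u_0|,\sup_{x\ge R}|u(x)|\}$, so only the finitely many initial states with $Y_i\in[R,M'']$ and the waves produced by their interactions are relevant, and all speeds occurring in $K$ lie in $[-S,S]$. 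Then I split $U^\nu=U^\nu_{\mathrm{reg}}+\hat U^\nu$, where $\hat U^\nu$ collects all shadow--wave strips (the $0$-SDW together with the small shadow waves making up $\hat U^\varepsilon$) and $U^\nu_{\mathrm{reg}}$ is everything else. For $U^\nu_{\mathrm{reg}}$ the estimates are immediate: its density never exceeds $\bar\rho:=\max\{\rho_0,\sup_{x\ge R}\rho(x)\}$, and the velocity $u^\nu$ takes only the values $u_0$, the sampled data $u(Y_i)$, the bounded vacuum profiles $u_i(x,t)$, and the shadow intermediate velocities, which by overcompressibility (Corollary \ref{coroll:bounds}, Lemma \ref{lemma:N1}) differ by $o(1)$ from $u_s(t)\in[\inf u,\sup u]$; hence $\|u^\nu\|_{L^\infty}$ is bounded uniformly in $\nu$, giving $|u^\nu|(K)\le\|u^\nu\|_{L^\infty}|K|$ and $\rho^\nu_{\mathrm{reg}}(K)\le 2M\bar\rho T_\ast$.

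The core of the argument is controlling the singular part $\hat\rho^\nu$. At each time $t$ the mass of $\hat\rho^\nu(\cdot,t)$ on $[-M,M]$ is at most the total strength $\Sigma^\nu(t):=\sum_j\xi_j(t)$ of the shadow waves alive at time $t$ (finitely many, by the localization step), and I would prove $\Sigma^\nu(t)\le C_1 t$ with $C_1$ independent of $\nu$. This rests on three facts, used in this order: (i) $\Sigma^\nu(0^+)=0$, since every wave emerging at $t=0$ has strength $\xi_{i,i+1}t$; (ii) $\Sigma^\nu$ is continuous in $t$, because at each interaction the outgoing strength equals the sum of the incoming ones, (\ref{initial_strength}); and (iii) between consecutive interactions $|\tfrac{d}{dt}\Sigma^\nu|$ is bounded by a $\nu$-independent constant, obtained by summing $\xi_{i,j}'=(\rho_j-\rho_i)u_s-(\rho_ju_j-\rho_iu_i)$ from (\ref{ode_lr}) over all present shadow waves: only boundedly many shadow waves border a vacuum at any time (a number controlled by the number of local extrema of $u$), each contributing $\mathcal{O}(\bar\rho S)$, while the remaining shadow waves join consecutive pairs of \emph{positive} states, so their contributions telescope and are bounded by $S\,\operatorname{TV}_{[R,M'']}(\rho)+\operatorname{TV}_{[R,M'']}(\rho u)\le\big(S\sup|\rho'|+\sup|(\rho u)'|\big)(M''-R)$, since the piecewise constant approximation has total variation no larger than that of the underlying $C^1$ data. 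Integrating (iii) together with (i)--(ii) yields $\Sigma^\nu(t)\le C_1 t$, whence $\rho^\nu(K)\le\int_0^{T_\ast}\big(2M\bar\rho+C_1 t\big)\,dt<\infty$ uniformly in $\nu$; combined with the bound on $|u^\nu|(K)$ this proves the lemma.

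The main obstacle is step (iii). The number of shadow waves inside the domain of dependence grows like $\varepsilon_\nu^{-1/3}$, so simply adding the per-wave strength bounds of Lemma \ref{lemma:strength_bound} diverges. The way around it is to treat the total strength $\Sigma^\nu$ as a conserved-type quantity — exactly preserved at interactions by (\ref{initial_strength}), with an aggregate growth rate between interactions that collapses onto the bounded total variation of the initial data rather than onto a sum of $\varepsilon_\nu^{-1/3}$ separate estimates. A secondary technicality is keeping this uniform while the configuration runs through interleaved pieces of Cases I--IV; here one uses only that every interaction is of type (A1)--(A4) and that (\ref{initial_strength})--(\ref{initial_speed}) hold regardless of the case, and that the vacuum regions, although possibly numerous, are adjacent to only boundedly many shadow waves and so do not spoil the variation estimate.
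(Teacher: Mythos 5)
Your proposal is correct and follows essentially the same route as the paper: a uniform $L^\infty$ bound on $u^\nu$ from boundedness of the data and overcompressibility, finite propagation speed to localize, and a bound on the mass of $\rho^\nu(\cdot,t)$ over a compact spatial set which is then integrated in $t$. The only difference is one of detail: where the paper obtains the mass bound by invoking the conservation of mass principle in a single line, you derive it explicitly by showing that the total shadow-wave strength $\Sigma^\nu(t)$ vanishes at $t=0^+$, is continuous across interactions by (\ref{initial_strength}), and has a $\nu$-independent growth rate controlled by the total variation of the initial data on the domain of dependence --- which correctly identifies and avoids the divergence one would get by summing $\mathcal{O}(\varepsilon_\nu^{-1/3})$ individual strength bounds from Lemma \ref{lemma:strength_bound}.
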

\begin{proof} Due to construction of the solution, boundedness
of $u(x)$ and Lemma \ref{lemma:speed_bound} we have that $u^\nu$ is
uniformly globally bounded function for $\nu \in \mathbb{N}_0$. 
In order to prove that $\rho^\nu$ is uniformly $L^1_{loc}$-bounded 
for $\nu \in \mathbb{N}_0$,
we will use the conservation of mass principle,
boundedness of $\rho(x)$ and the finite propagation speed property.
For each $E\Subset \mathbb{R}$ there exists a $C_E>0$ such that
\[0\leq \int_{E\times (t_0,T)}\rho^\nu(x,t)\,dxdt\leq (T-t_0) \cdot C_E
\sup_{x\in \mathbb{R}}\rho(x,0)<\infty.\]
Thus, $|u^\nu|$ and $\rho^\nu$
are bounded in $L^{1}(K)$ for every compact set $K\subset \mathbb{R}^2_+$,
i.e.\ $|U^\nu|$ is uniformly locally bounded measure.
\end{proof}

\begin{proof}[Proof of Theorem \ref{thm:exist_limit}] Due to Lemma
\ref{lemma:lbound_meas} we know that $\rho^\nu$ and $|u^\nu|$ are
(nonnegative) uniformly locally bounded measures. Thus, there exist
uniformly locally bounded measures $U^\nu_+$ and $U^\nu_-$ such that
$U^\nu=U^\nu_+-U^\nu_-$ and $|U^\nu|=U^\nu_{+} + U^\nu_-$. From Proposition
\ref{prop:dl} it follows that there exist subsequences 
$\{U^\nu_{+}\}_{\nu \in \mathbb{N}_0}$,
$\{U^\nu_{-}\}_{\nu \in \mathbb{N}_0}$ 
and locally finite measures $U^{\ast}_+$, 
$U^{\ast}_{-}$ such that $U^\nu_+\overset{\ast}{\rightharpoonup}
U^{\ast}_+$ and $U^\nu_-\overset{\ast}{\rightharpoonup} U^{\ast}_-$. Thus,
$U^\nu$ converges weakly to $U^{\ast}:=U^{\ast}_+-U^{\ast}_-$. Note that
one can also use Proposition \ref{prop:dl} directly to obtain the
subsequence $\{|U^\nu|\}_{\nu \in \mathbb{N}_0}$ 
that converges weakly to $|U^{\ast}|$.
\end{proof}

In certain cases, it is possible to find an explicit form of
a measure--valued limit $U^*$ at least for some small time interval
as one can see in the following theorem.

\begin{theorem} \label{existence} 
Suppose that  all the assumptions of
Theorem \ref{thm:exist_limit} hold, as well as the notation. Let $u_{0}>u(R)$. 
There exists $T_{\mathop{\rm max}}>0$ such that
$U^{\ast}$ is the weighted $\delta$ measure supported by a curve 
$\Gamma:\; x=c(t)$ that connects $U_{0}$ from the left and
a classical solution $U(x,t)$ to (\ref{chRS}) to the right 
in the strip $t<T_{\max}$.
The life-span $T_{\mathop{\rm max}}$ 
is a positive infimum of $-\frac{1}{u'(x)}$, $x>R$
such that $D_{x}:=\Big( x-\frac{u(x)}{u'(x)},-\frac{1}{u'(x)}\Big)$
lies above the curve $\Gamma$.
\end{theorem}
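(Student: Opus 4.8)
The plan is to build the candidate limit by hand, check it solves the system in the measure sense, and then show the approximating family $U^\nu$ has no choice but to converge to it before $T_{\max}$. \emph{Construction of the candidate.} For $x>R$ the classical solution of (\ref{chRS}) is obtained by characteristics: $u$ solves Burgers' equation, so $u(x,t)=u(x_0)$ along $x=x_0+u(x_0)t$, while conservation of mass along the same characteristic gives $\rho(x,t)=\rho(x_0)/(1+u'(x_0)t)$. This $U(x,t)=(\rho,u)(x,t)$ is $C^1$ (with vacuum where $u$ increases) as long as no two characteristics have met, i.e.\ below the envelope of the characteristic field, which is parametrised exactly by $D_x=\big(x-u(x)/u'(x),\,-1/u'(x)\big)$ for those $x>R$ with $u'(x)<0$. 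Next define the delta--shock curve $\Gamma:\ x=c(t)$ together with its weight $w(t)$ as the solution of the generalized Rankine--Hugoniot system obtained from Lemma \ref{lemma:N1} with left state $U_0$ (constant) and right state the trace $U(c(t)^+,t)$ of the classical solution, $c(0)=R$, $w(0)=0$. Since $U$ is $C^1$ on its smoothness domain, Picard--Lindel\"of yields a unique local solution; the bounds of Corollary \ref{coroll:bounds}, applied on each subinterval, give overcompressibility $u_0\ge c'(t)\ge u(c(t)^+,t)$ and $w(t)>0$ nondecreasing, so $\Gamma$ strictly outruns the characteristic on its right and absorbs it. We continue $c(t)$ until $\Gamma$ first meets a caustic point, that is, up to
\[
T_{\max}:=\inf\{-1/u'(x):\ x>R,\ D_x\ \text{lies above (to the right of) }\Gamma\},
\]
with $T_{\max}=+\infty$ if no such $x$ exists. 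Since $u\in C^1_b$, $-1/u'(x)\ge 1/\|u'\|_{L^\infty}$ whenever $u'(x)<0$, hence $T_{\max}\ge 1/\|u'\|_{L^\infty}>0$. This is the asserted description of the life--span.

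\emph{The candidate is a measure solution, and the structure of $U^\nu$ for $t<T_{\max}$.} Let $U^{\ast}$ be equal to $U_0$ for $x<c(t)$, to $U(x,t)$ for $x>c(t)$, plus $w(t)\,\delta_{x=c(t)}$, on $0\le t<T_{\max}$. On each side of $\Gamma$ it satisfies (\ref{chRS}) classically, and the ODE system defining $(c,w)$ is precisely what cancels the $\delta$-- and $\delta'$--contributions across $\Gamma$ in the distributional formulation; so $U^{\ast}$ is a measure--valued weak solution of (\ref{chRS}, \ref{ri}) on that strip. The key observation for the convergence is that, for $t<T_{\max}$, no caustic of the right--hand characteristic field that has not already been swept by $\Gamma$ has formed yet; since the mutual interaction time of two neighbouring small waves $\text{SDW}_{i,i+1}$, $\text{SDW}_{i+1,i+2}$ is $\approx -1/u'(\cdot)$, the small shadow waves and the $\text{CD}_1+\text{Vac}+\text{CD}_2$ blocks lying to the right of the $0$--SDW do not interact among themselves before $T_{\max}$; the only interactions in $\{t<T_{\max}\}$ are absorptions of these small waves by the $0$--SDW.

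\emph{Convergence.} Consequently: (i) to the left of the $0$--SDW one has exactly $U_0$; (ii) the $0$--SDW curve $c^\nu(t)$ and strength $w^\nu(t)$, which between consecutive absorptions satisfy the discrete analogue of the defining ODE with per--step error $\mathcal{O}(\varepsilon_\nu)$ over $\mathcal{O}(\varepsilon_\nu^{-1/3})$ steps, converge uniformly on compact subsets of $[0,T_{\max})$ to $(c(t),w(t))$ by a Gr\"onwall estimate; (iii) on the right region the piecewise constant states $\rho_{i+1}$ together with the small shadow waves --- each of mass $\sim\sqrt{\rho_i\rho_{i+1}}(u_i-u_{i+1})t=\mathcal{O}(\varepsilon_\nu^{1/3})$, located on a grid of spacing $\mathcal{O}(\varepsilon_\nu^{1/3})$ --- converge weakly--$\ast$, by a Riemann--sum/mass--conservation computation, to the absolutely continuous density $\rho(x,t)=\rho(x_0)/(1+u'(x_0)t)$ and velocity $u(x,t)$ of the classical solution. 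Since the weak limit of $U^\nu$ is unique along the subsequence of Theorem \ref{thm:exist_limit} and the three pieces add up to $U^{\ast}$, we conclude $U^{\ast}=\lim_\nu U^\nu$ on $\{t<T_{\max}\}$. \emph{The main obstacle} is step (iii): the small shadow waves carry $\mathcal{O}(1)$ total mass, so they cannot be discarded, and one must show that, assembled with the intervening constant states, they reproduce exactly the compressed smooth density of the classical solution rather than an extra singular part --- this is where the ``well--balanced'' choice of the partition $\{Y_i\}$ enters and where essentially all the estimates live, together with the bookkeeping needed to keep the absorptions of the small waves by the $0$--SDW under control uniformly in $\nu$.
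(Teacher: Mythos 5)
Your outline reproduces the architecture of the paper's argument: classical solution by characteristics to the right (your formula $\rho(x,t)=\rho(x_0)/(1+u'(x_0)t)$ agrees with the paper's exponential expression), a singular part supported on the 0--SDW curve, $T_{\max}$ read off from the caustic points $D_x$, and a Riemann--sum/mass--conservation verification on the right region. But you explicitly stop at the decisive step. The whole content of the theorem in the decreasing case is the computation you defer in (iii): one must show that in each cell $[X_k,X_{k+1}]$, $X_k=Y_k+y_{k,k+1}T$, the half--masses $\tfrac12(\xi_{k,k+1}+\xi_{k+1,k+2})T$ of the two bounding small shadow waves are cancelled, to order $\mathcal{O}(\varepsilon_\nu^{2/3})$, by the compression of the cell, i.e.\ by $\rho_{k+1}(y_{k+1,k+2}-y_{k,k+1})T$, using the expansion $y_{k,k+1}-y_{k-1,k}=-\tfrac12(u_{k-1}-u_{k+1})+\mathcal{O}(\varepsilon_\nu^{2/3})$ and $\sqrt{\rho_k\rho_{k+1}}=\rho_{k+1}+\tfrac12(\rho_k-\rho_{k+1})+\mathcal{O}(\varepsilon_\nu^{2/3})$. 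Only after this cancellation does each cell carry mass $\rho_{k+1}(Y_{k+1}-Y_k)+\mathcal{O}(\varepsilon_\nu^{2/3})$, so that summing over $\mathcal{O}(\varepsilon_\nu^{-1/3})$ cells gives $\int_{Y_-}^{Y_+}\rho$ and, by mass conservation under the characteristic flow, $\int_{X_-}^{X_+}\rho(\cdot,T)$. Saying ``this is where essentially all the estimates live'' is an accurate diagnosis, but it leaves the proof incomplete: without that cancellation nothing rules out the $\mathcal{O}(1)$ total mass of the small waves producing an extra singular part in the limit.

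A second, more structural issue is your step (ii). You assert that $c^\nu,w^\nu$ converge to the solution $(c,w)$ of the generalized Rankine--Hugoniot ODE with right state the \emph{trace of the classical solution}, ``by a Gr\"onwall estimate.'' Between absorptions the discrete 0--SDW sees the uncompressed state $(\rho_{i+1},u_{i+1})=(\rho(Y_{i+1}),u(Y_{i+1}))$, not the compressed trace $\rho(\psi)/(1+u'(\psi)t)$; the missing mass flux enters only through the discrete absorption of the small shadow waves, so the ``per--step error'' is not simply $\mathcal{O}(\varepsilon_\nu)$ against your target ODE and a naive Gr\"onwall bound does not close. The paper sidesteps this entirely in Theorem \ref{existence}: it only extracts an $L^1_{\rm loc}$--convergent subsequence of $\hat{U}^\nu$ by the speed and strength bounds and identifies its support, deferring the actual convergence (and uniqueness) of the curves $\Gamma^\nu$ to Theorem \ref{thm:uniqueness}, which needs the nested equidistant partitions of Assumption \ref{assump:a} and a Cauchy--sequence estimate on successive refinements. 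Your stronger claim in (ii) would, if proved, subsume that later theorem, which is a signal that it cannot follow from the one--line argument you give.
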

\begin{remark} Theorem
\ref{existence} holds for $u_0\leq u(R)$ and increasing $u(x)$ too.
That is a trivial case since a solution converges to a
smooth solution obtained by the method of characteristics.
\end{remark}
\begin{proof} Let $T>0$ be arbitrary but fixed.
First, we will show that $\hat{U}^{\nu}$ 
has a subsequence that converges. 
 It is bounded in 
$L^{1}_{\rm loc}(\mathbb{R}_{+}^{2})$ uniformly for $\nu \in \mathbb{N}_0$
by Lemmas \ref{lemma:speed_bound} and
\ref{lemma:strength_bound}. 
Therefore, it has a subsequence
that converges to some $\hat{U}^{\ast}\in \mathcal{M}(\mathbb{R}_+^2)$.
From the construction, it is obvious that its support is the curve
$\Gamma$.

On the other hand, a part of $U^\nu$ lying to the right of
$\hat{U}^{\nu}$ converges to a classical solution $U$ obtained by 
method of characteristics as long the classical solution exists.
Let us show that. 

Suppose that $u(x)$ is increasing.
The procedure from Section \ref{sec:algorithm} gives the 
admissible solution $U^\nu=(\rho^\nu,u^\nu)$ to (\ref{chRS}) 
consisting of a sequence of contact discontinuities connected by a 
vacuum state. The classical initial value problem
$(\rho,u)|_{t=0}=(\rho(x),u(x))$
can be solved by method of characteristics. For smooth solutions and
away from vacuum state one gets the Burgers equation $\partial_t
u+u\partial_x u=0$. Its characteristics are integral curves of ordinary
differential equation $\frac{dx}{dt}=u(x(t),t)$ and a solution is given by
\begin{equation*} %\label{class_u} 
	u(x,t)=u(\psi(x,t)),
\end{equation*} where a
function $\psi=\psi(x,t)$ satisfies $x=u(\psi)t+\psi$. The existence of
function $\psi$ for each $t>0$ and in the region where $u(x)$ is strictly
increasing follows from the Implicit Function Theorem.
From the first equation in (\ref{chRS}), one can see that $\rho$
satisfies the equation $\partial_t \rho+u\partial_x \rho=-\rho \partial_x
u$. That is,
\begin{equation*} % \label{class_rho}
\rho(x,t)=\rho(\psi(x,t))\exp\Big(\!-\!\int_0^t
\frac{u'(\psi(x,t))}{u'(\psi(x,t))s+1}\,ds\Big)\in C^1.
\end{equation*}
The solution $(\rho,u)$ corresponding to the region where $u(x)$ is constant
is also constant. 

For each interval $[X_{-},X_{+}]$ and time $T>0$, let us show that
\begin{equation}\label{limit_incr}
\begin{split}
I_{\nu}:=\int_{X_{-}}^{X_{+}}\rho^{\nu}(x,T)\,dx & \to
\int_{X_{-}}^{X_{+}}\rho(x,T)\,dx \text{ as } \nu \to \infty, \\
\int_{X_{-}}^{X_{+}}u^{\nu}(x,T)\,dx & \to
\int_{X_{-}}^{X_{+}}u(x,T)\,dx \text{ as } \nu \to \infty.
\end{split}
\end{equation} 
For any $\nu\in \mathbb{N}_0$, let $U^\nu$ be the solution
constructed by using the partition $\{Y_i\}_{i\in \mathbb{Z}}$ 
such that $Y_i-Y_{i-1}<C\sqrt[3]{\varepsilon_{\nu}}$, $C\geq 1$,
$i\in \mathbb{Z}$. (To simplify notation we will drop superscript
$\nu$ in $\{Y_{i}^{\nu}\}_{i\in\mathbb{{Z}}}$.)
There exist $Y_{-}$, $Y_{+}$ such that
$X_{-}={Y_-}+u(Y_{-})T$ and ${X_+}=Y_{+}+u(Y_{+})T$. Suppose that
$Y_{-}\in(Y_{l-1},Y_{l}]$, ${Y_+}\in[Y_{m},Y_{m+1})$ for some
$l,m \in \mathbb{Z}$. Denote by
$X_{0,i}:=Y_{i}+u(Y_{i})T$, $X_{1,i}:=Y_{i}+u(Y_{i+1})T$,
$i\in \mathbb{Z}$.
The function $u^{\nu}(x,t)$ is a good approximation of $u(x,t)$
since it is uniquely determined in non-vacuum part and 
its value in vacuum part is continuously interpolated.
 We will use the conservation of
mass to prove $(\ref{limit_incr})_1$. Note that $\rho^{\nu}(x,T)=0$,
$x\in(X_{0,i},X_{1,i})$ and 
\[\begin{split} I_{\nu} = &
\int_{X_{-}}^{X_{+}}\rho^{\nu}(x,T)\,dx
=\int_{X_{0,l}}^{X_{1,m}}\rho^{\nu}(x,T)\,dx +\underbrace{
\int_{X_{-}}^{X_{0,l}}\rho^{\nu}(x,T)\,dx +
\int_{X_{1,m}}^{X_{+}}\rho^{\nu}(x,T)\,dx}_{=:I_{\nu}'}\\ = &
\sum_{i=l}^{m-1}\rho_{i+1}\big(X_{0,i+1}-X_{1,i}\big)
+\mathcal{O}\big(\sqrt[3]{\varepsilon_{\nu}}\big)
= \sum_{i=l}^{m-1} \rho(Y_{i+1})\big(Y_{i+1}-Y_{i}\big) +
\mathcal{O}\big(\sqrt[3]{\varepsilon_{\nu}}\big) \\ 
\approx & \int_{Y_{l}}^{Y_{m}} \rho(x)\,dx \to
\int_{Y_{-}}^{Y_{+}} \rho(x)\,dx\text{ as } \nu \to \infty.
\end{split}\]
We have used that $\rho(x)$ and $u(x)$ together with 
their first derivatives are bounded in order to get 
that $I_{\nu}'=\mathcal{O}(\sqrt[3]{\varepsilon_{\nu}})$. 
Due to the mass conservation 
and the fact that flow maps $[Y_{-},Y_{+}]$ to
$[X_{-},X_{+}]$ we have
\[\mathcal{M}\big([Y_{-},Y_{+}]\big):=\int_{Y_{-}}^{Y_{+}} \rho(x,0)\,dx=
\int_{X_{-}}^{X_{+}}\rho(x,T)\,dx=:\mathcal{M}\big([X_{-},X_{+}]\big),\]
and (\ref{limit_incr}) is proved.
A value of $T_{\mathop{\rm max}}$ is arbitrary here.

Next, suppose that $u(x)$ is decreasing. 
The solution $U^\nu$ consists of shadow waves separating constant states
in the beginning. The first interaction occurs in 
a non-negligible time (see (\ref{dif_decr}) and the analysis there), 
since $Y_{i-1}-Y_i< C\sqrt[3]{\varepsilon_{\nu}}$ for each $i$. 
A classical solution to (\ref{chRS}, \ref{initial_3}) 
with decreasing $u(x)$ exists only until some
time $T_{\mathop{\rm max}}$ when the first pair of characteristics intersect.
That is, shadow waves intersect approximately at the same time as
nearby characteristics.
%First, note that a shadow wave's speed connecting states $(\rho_{i},u_{i})$ and $(\rho_{i+1},u_{i+1})$ satisfy
%\begin{equation*} %\label{u_sim}
%y_{i,i+1}=\frac{u_i\sqrt{\rho_i}+u_{i+1}\sqrt{\rho_{i+1}}}
%{\sqrt{\rho_i}+\sqrt{\rho_{i+1}}}\sim u_i\sim u_{i+1} 
%\text{ as } \nu\to \infty.
%\end{equation*} 

Let $T<T_{\mathop{\rm max}}$. Take an interval
$[X_{-},X_{+}]$, where $X_{\ast}=Y_{\ast}+u(Y_{\ast})T$, $\ast \in
\{+,-\}$. It is clear that $[Y_{-},Y_{+}]$ maps to $[X_{-},X_{+}]$ when
$t=T$, so $(\ref{limit_incr})_2$ follows.
Suppose that $Y_{-}\in(Y_{l-1},Y_{l}]$, $Y_{+}\in[Y_{m},Y_{m+1})$
for some $l,m \in \mathbb{Z}$ as in the previous case. Denote
$X_{i}:=Y_{i}+y_{i,i+1}T$. Then
\[\begin{split} S_k
:&=\int_{X_k}^{X_{k+1}}\rho^{\nu}(x,T)\,dx=\frac{1}{2}\xi_{k-1,k}T
+\int_{X_k+\frac{\varepsilon_{\nu}}{2}T}^{X_{k+1}
-\frac{\varepsilon_{\nu}}{2}T}
\rho_{k+1}\,dx+\frac{1}{2}\xi_{k,k+1}T\\
&=\frac{1}{2}\big(\xi_{k-1,k}+\xi_{k,k+1}\big)T+\rho_{k+1}\big(X_{k+1}-X_k\big)
-\varepsilon_{\nu} T \rho_{k+1}\\
&=\frac{1}{2}\big(\xi_{k-1,k}+\xi_{k,k+1}\big)T+\rho_{k+1}\big(Y_{k+1}-Y_k\big)
+\rho_{k+1}\big(y_{k,k+1}\!-\!y_{k-1,k}\big)T\!-\!\varepsilon_{\nu} T
\rho_{k+1},
\end{split}\]
since
\[\xi_{k,k+1}T=\sqrt{\rho_k\rho_{k+1}}(u_k-u_{k+1})T =\lim_{\nu\to
\infty}\int_{X_k-\frac{\varepsilon_\nu}{2}T}^{X_k+\frac{\varepsilon_\nu}{2}T}
\rho^{\nu}(x,T)\,dx.\]
Using (\ref{dif_decr}) with $i=k-1$, $j=k+1$ and 
$\rho_{k+1}=\rho_k+\mathcal{O}(\sqrt[3]{\varepsilon_{\nu}})$, 
\begin{equation}\label{y_sim} y_{k,k+1}-y_{k-1,k}= -
\frac{1}{2}\big(u_{k-1}-u_{k+1}\big)
+\mathcal{O}\big(\sqrt[3]{\varepsilon_{\nu}^2}\big).
\end{equation}
Boundedness of $\rho(x)$ implies $\sqrt{\rho_{k}\rho_{k+1}}=
\rho_{k+1}+\frac{1}{2}(\rho_{k}-\rho_{k+1})
+\mathcal{O}\big(\sqrt[3]{\varepsilon_{\nu}^{2}}\big)$, and
$\sqrt{\rho_{k-1}\rho_{k}}=
\rho_{k+1}+\frac{1}{2}(\rho_{k-1}+\rho_{k}-2\rho_{k+1})
+\mathcal{O}\big(\sqrt[3]{\varepsilon_{\nu}^{2}}\big)$.
Together with (\ref{y_sim}), it implies
\begin{equation*}
\begin{split}
\beta_k: = & \frac{1}{2}\big(\xi_{k-1,k}+\xi_{k,k+1}\big)+\rho_{k+1}
\big(y_{k,k+1}-y_{k-1,k}\big) \\ = &
\frac{1}{2}\rho_{k+1}(u_{k-1}-u_{k})
+\frac{1}{2}\rho_{k+1}\big(u_{k}-u_{k+1}\big)
-\frac{1}{2}\rho_{k+1}\big(u_{k-1}-u_{k+1}\big)
+\mathcal{O}\big(\sqrt[3]{\varepsilon_{\nu}^{2}}\big) \\
& +\frac{1}{4}\underbrace{\big((\rho_{k-1}+\rho_{k}-2\rho_{k+1})
(u_{k-1}-u_{k})+ (\rho_{k}-\rho_{k+1}) 
(u_{k}-u_{k+1})\big)}_{\sim \sqrt[3]{\varepsilon_{\nu}^{2}}})
=\mathcal{O}\big(\sqrt[3]{\varepsilon_{\nu}^{2}}\big).
\end{split}
\end{equation*} 
Thus,
\[\begin{split} I_{\nu} := & \int_{X_{-}}^{X_{+}}\rho^{\nu}(x,T)\,dx\\
    = & \sum_{k=l}^{m-1}S_{k}+ \rho_{l}(X_{l}-X_{-})+\frac{1}{2}
\big(\xi_{l,l+1}+\xi_{m,m+1}\big)T+\rho_{m+1}(X_{+}-X_{m})\\ = &
\sum_{k=l}^{m-1} \rho_{k+1}\big(Y_{k+1}-Y_k\big)
+\rho_{l}(Y_{l}-Y_{-})+\rho_{m+1}(Y_{+}-Y_{m})
+T\sum_{k=l}^{m-1}\beta_k \\ &
+\underbrace{\Big(\frac{1}{2}\big(\xi_{l,l+1}+\xi_{m,m+1}\big)
-\rho_l\big(u(Y_{-})-y_{l,l+1}\big)-\rho_{m+1}
\big(y_{m,m+1}-u(Y_{+})\big)\Big)T}_{\sim \sqrt[3]{\varepsilon_{\nu}}} \\ 
& -\varepsilon_{\nu} T \sum_{k=l}^{m-1} \rho_{k+1}, \; \nu \to \infty.
\end{split}\]
The above sum $\sum_{k=l}^{m-1}\beta_{k}$ has $\mathcal{O}(1/\sqrt[3]{\varepsilon_{\nu}})$ globally bounded elements due to the assumption $
\sqrt[3]{\varepsilon_{\nu}}\leq Y_{i}-Y_{i-1}\leq C \sqrt[3]{\varepsilon_{\nu}}$ from Theorem \ref{thm:exist_limit}.
Thus, it is bounded from above by $\mathop{\rm const} \cdot 
\sqrt[3]{\varepsilon_{\nu}}$.  Then
\[\varepsilon_{\nu} T \sum_{k=l}^{r-1} \rho_{k+1}\leq
\sqrt[3]{\varepsilon_{\nu}^{2}}T\sum_{k=l}^{r-1} \rho_{k+1}
\big(Y_{k+1}-Y_k\big)\to 0 \text{ as }\nu \to \infty,\]
since $\rho(x)$ is
bounded. Therefore,
\[I_{\nu}\to \int_{Y_{-}}^{Y_{+}}
\rho(x)\,dx=\mathcal{M}\big([Y_{-},Y_{+}]\big)
=\mathcal{M}\big([X_{-},X_{+}]\big)
\text{ as } \nu \to \infty.\]

The limit $U^{\ast}$ for $t<T_{\max}$ is the weighted
delta measure $\hat{U}_{\ast}$ connecting $(\rho_{0},u_{0})$ and 
the classical solutions obtained by the above procedure.
The life-span $T_{\mathop{\rm max}}$ is determined by the fact that we can
use the above arguments as long as the classical solution exists below $\Gamma$.
That is, as long as characteristics intersect above it. For a neighborhood
of a point $x>R$ their intersection is at the point around $D_{x}$ and 
the assertion follows.

The case when $u(x)$ changes monotonicity finitely many times reduces to
combining these two cases.
\end{proof}

\begin{remark} The life--span $T_{\max}$ equals infinity if $u(x)$ 
  is increasing or if $u'(x)\leq 0$ with small enough absolute value. 
  For a finite $T_{\max}$ we do not know what is distributional limit 
  of solution for $t\geq T_{\max}$, 
  but a solution becomes a single delta shock connecting
  $(\rho(R),u(R))$ and $(\rho(\infty),u(\infty))$ for $t \gg 1$.
\end{remark}

\begin{remark}
Again, the above result is easily extended to system (\ref{PGD})
with the additional energy variable, so all the assertions
in this section hold for that system, too.	
Smooth energy component solves the 
equation $\partial_{t} e + u \partial_{x} e=0$.
\end{remark}

\subsection{Partitions of equidistant type}

Proofs of Theorems \ref{thm:exist_limit} and \ref{existence} are based 
on the compactness argument without any information about a
uniqueness of the limit. We shall now prove that the limit $U^\ast$
given in Theorem \ref{existence} is unique at least for $t<T_{\max}$ 
if partitions of the interval $[R,\infty)$ satisfy the equidistant property: 
Take $\varepsilon$ small enough and define a family of partitions
$\{\mathcal{P}^{\nu}\}_{\nu\in \mathbb{N}_{0}}$ 
in the following way. If $\mathcal{P}^{\nu}=\{Y_i^\nu\}_{i\in \mathbb{N}_{0}}$,
then $\mathcal{P}^{\nu+1} = \mathcal{P}^{\nu} \cup
\big\{Y^\nu_{i+\frac{1}{2}}\big\}_{i\in \mathbb{N}_{0}}$,
where $\sqrt[3]{\varepsilon} \leq  Y^0_{k+1}-Y^0_k 
\leq C \sqrt[3]{\varepsilon}$ 
for each $k$ and some constant $C\geq 1$. If 
$\frac{\sqrt[3]{\varepsilon}}{2^\nu}\leq Y_{k+1}^\nu-Y_{k}^\nu
\leq \frac{C \sqrt[3]{\varepsilon}}{2^{\nu}}=:\mu_{\nu}$ 
for every $k\in\mathbb{N}_0$
and $\nu\in \mathbb{N}_{0}$, the family is said to 
have the equidistant property.
For each partition $\mathcal{P}^{\nu}$ a corresponding  
$U^{\nu}$ is defined in Theorem \ref{thm:exist_limit} 
for $\varepsilon_{\nu}=\varepsilon/2^{3\nu}$.
Denote by $\Gamma^{\nu}: x=c^{\nu}(t)$ the 0-SDW curve in $U^{\nu}$.

\begin{assumption}\label{assump:a} Suppose that $u(x)$ and
$\rho(x)>0$ are continuous and bounded together with their first
derivatives, and $u(x)$ has a finite number of local extremes. 
The values $u_0>\sup_{x\geq R}u(x)$
and $\rho_0>0$ are chosen such that the minimum distance between a
slope of the curve $\Gamma^{\nu}$ and $u(c^{\nu}(t),t)$ is
uniformly greater than zero.
\end{assumption} 

We want to show the uniqueness of the limit $U^{\ast}$
for sequences $\{U^{\nu}\}_{\nu \in \mathbb{N}_0}$ defined by partitions
of equidistant type.
It suffices to show that the
curve $\Gamma$ from Theorem \ref{existence} is unique since
it connects $U_0$ and the unique classical solution $U(x,t)$.

\begin{theorem}\label{thm:uniqueness} If
Assumption \ref{assump:a} holds, then 
a sequence $\{U^{\nu}\}_{\nu \in \mathbb{N}_{0}}$ defined by  
partitions $\{\mathcal{P}^{\nu}\}_{\nu\in \mathbb{N}_{0}}$ 
of equidistant type converges to the unique bounded measure
$U^{\ast}$  in 
$\mathbb{R}\times(0,T_{\max})$ as $\nu\to \infty$.
\end{theorem}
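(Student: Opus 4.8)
The plan is to reduce the whole statement to the convergence of a single scalar function, namely the position $c(t)$ of the $0$-SDW, and then to pin down its limit through a well-posed ordinary differential equation. By Theorem~\ref{existence}, for each $\nu$ the solution $U^{\nu}$ consists of the constant state $(\rho_0,u_0)$ to the left of $\Gamma^{\nu}:\ x=c^{\nu}(t)$, a weighted $\delta$ measure carried by $\Gamma^{\nu}$ whose weight $\xi^{\nu}(t)$ is determined by conservation of mass once $c^{\nu}$ is known (this is exactly the bookkeeping carried out in the proof of Theorem~\ref{existence}), and, to the right of $\Gamma^{\nu}$, the piecewise constant approximation of the classical solution $U=(\rho,u)$ of (\ref{chRS}),\,(\ref{initial_3}) produced by the method of characteristics. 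The computation in the proof of Theorem~\ref{existence} already shows, for the whole sequence and not merely a subsequence, that this right part converges to $U$, and $U$ is the unique classical solution for $t<T_{\max}$. Hence it suffices to show that $c^{\nu}\to c$ uniformly on compact subintervals of $[0,T_{\max})$ for one function $c$; the limit $U^{\ast}$ is then the bounded measure that equals $(\rho_0,u_0)$ left of $\Gamma:\ x=c(t)$, the weighted $\delta$ fixed by mass conservation on $\Gamma$, and $U$ to the right, and it is unique.

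First I would write down the equation that $c$ must satisfy. In the algorithm $c^{\nu}$ is built from the delta-shock system (\ref{ode_lr}) on successive time intervals, with strengths added and speeds replaced by the momentum average (\ref{initial_strength}),\,(\ref{initial_speed}) at each interaction. Passing to the limit, the $0$-SDW separates $(\rho_0,u_0)$ on the left from the classical values $(\rho(c(t),t),u(c(t),t))$ on the right, so the triple $(c,u_s,\xi)$ should solve (\ref{ode_lr}) with $(\rho_l,u_l)=(\rho_0,u_0)$, $(\rho_r,u_r)=(\rho(c(t),t),u(c(t),t))$, $c'=u_s$, together with $c(0)=R$, $\xi(0)=0$. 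Because $U$ is $C^1$ below $T_{\max}$ and, by Assumption~\ref{assump:a}, $u_s(t)$ — hence $u_0$ — stays uniformly above $u(c(t),t)$, the difference $u_0-u(c(t),t)$ is bounded below by a positive constant, so Corollary~\ref{coroll:bounds} gives $\xi(t)\ge \mathrm{const}\cdot t>0$ for $t>0$. Consequently the right-hand side of this coupled system is Lipschitz in $(c,u_s,\xi)$ for $0<t<T_{\max}$, the only danger (division by $\xi$ near $t=0$) being resolved by the explicit initial speed $c'(0+)=\frac{\sqrt{\rho(R)}\,u(R)+\sqrt{\rho_0}\,u_0}{\sqrt{\rho(R)}+\sqrt{\rho_0}}$ coming from the closed-form formula for the first SDW; thus the system has a unique Carath\'eodory solution on $[0,T_{\max})$.

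The core step is a consistency argument: every subsequential limit of $\{c^{\nu}\}$ solves this system. The curves $c^{\nu}$ are uniformly Lipschitz by Lemma~\ref{lemma:speed_bound}, so by Arzel\`a--Ascoli a subsequence converges uniformly on compacts to some $c$, and the strengths $\xi^{\nu}$ are uniformly bounded by Lemma~\ref{lemma:strength_bound}, yielding a further limit $\xi$. One then passes to the limit in the discrete relations defining $c^{\nu}$: the step-function data converge in $L^{\infty}_{\mathrm{loc}}$ to $(\rho,u)$ since the mesh $\mu_{\nu}=C\sqrt[3]{\varepsilon_{\nu}}=C\sqrt[3]{\varepsilon}\,2^{-\nu}$ of the equidistant family tends to $0$; the neglected $\mathcal{O}(\varepsilon_{\nu}^{2})$ areas (Lemma~\ref{lemma:area}) and the $\mathcal{O}(\varepsilon_{\nu})$ weak-form defect of each interaction sum to $\mathcal{O}(\sqrt[3]{\varepsilon_{\nu}^{2}})$ over the $\mathcal{O}(1/\sqrt[3]{\varepsilon_{\nu}})$ interactions, exactly as estimated in Theorems~\ref{theorem:global_sol}--\ref{theorem:global_sol_mon}; and the interaction points along $\Gamma^{\nu}$ accumulate onto the instants at which neighbouring characteristics would meet. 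The outcome is that $(c,u_s,\xi)$ is the Carath\'eodory solution of the limiting system, which is unique; hence all subsequential limits of $c^{\nu}$ coincide, and $c^{\nu}\to c$ uniformly on compacts of $[0,T_{\max})$. Since $\xi^{\nu}$ depends continuously on $c^{\nu}$ through mass conservation, $\hat{U}^{\nu}$ then converges — along the whole sequence — to the weighted $\delta$ on $\Gamma:\ x=c(t)$, and together with the convergence of the right part this gives $U^{\nu}\rightharpoonup U^{\ast}$ with $U^{\ast}$ unique, which is the claim.

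An alternative that avoids the compactness step exploits the nesting directly: refining $\mathcal{P}^{\nu}$ to $\mathcal{P}^{\nu+1}$ perturbs the piecewise constant data by $\mathcal{O}(\mu_{\nu})$ in $L^{\infty}$, and Assumption~\ref{assump:a} supplies a uniform Lipschitz constant for the dependence of the $0$-SDW on the data, so a Gronwall estimate gives $\|c^{\nu+1}-c^{\nu}\|_{C[0,T]}\le K\mu_{\nu}=K\,C\sqrt[3]{\varepsilon}\,2^{-\nu}$; summability in $\nu$ shows $\{c^{\nu}\}$ is Cauchy in $C[0,T]$ and one recovers $c^{\nu}\to c$ without Proposition~\ref{prop:dl}. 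Either way, the main obstacle is the same, the rigorous discrete-to-continuum passage: controlling the accumulation of the interaction errors over the $\mathcal{O}(1/\sqrt[3]{\varepsilon_{\nu}})$ collisions and proving that the interaction points and speeds along $\Gamma^{\nu}$ converge to the characteristic-driven data along $\Gamma$. The remaining ingredients — uniqueness for the limiting ODE on $[0,T_{\max})$ and weak-$\ast$ precompactness from Proposition~\ref{prop:dl} — are routine.
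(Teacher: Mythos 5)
Your reduction is the right one: uniqueness of $U^{\ast}$ on $t<T_{\max}$ does come down to showing that the single curve $\Gamma^{\nu}:x=c^{\nu}(t)$ converges (the right part converges to the unique classical solution and the weight on $\Gamma$ is then fixed by mass conservation), and your second alternative -- comparing $c^{\nu}$ with $c^{\nu+1}$ and summing a geometric series in $\mu_{\nu}$ -- is exactly the skeleton of the paper's argument. The problem is that in both of your routes the one step that carries all the content is asserted rather than proved. In the Gronwall route you write that ``Assumption \ref{assump:a} supplies a uniform Lipschitz constant for the dependence of the $0$-SDW on the data,'' but $c^{\nu}$ is not the solution of a single ODE with a Lipschitz right-hand side to which Gronwall applies: it is rebuilt at each of the $\mathcal{O}(1/\mu_{\nu})$ interaction points by resetting $(\gamma,c)$ via (\ref{initial_strength})--(\ref{initial_speed}), and what must be shown is that one refinement step perturbs each interaction point $(X_{0,j},T_{0,j})$ by only $\mathcal{O}(\mu_{\nu}^{2})$, so that the accumulated drift over $\mathcal{O}(1/\mu_{\nu})$ collisions is $\mathcal{O}(\mu_{\nu})$. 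That per-collision $\mathcal{O}(\mu_{\nu}^{2})$ bound is the entire technical body of the paper's proof: Taylor expansions of $\xi(t)$, $u_{s}(t)$, $c(t)$ in $t$, the identity $\rho_l\rho_r[u]^2-(c[\rho]-[\rho u])^2=-[\rho]^2(c-y_{l,r})(c-z_{l,r})$ to get partition-independent constants $C_{\gamma_0}=MA^{2}/\gamma_0$, and the lower bound $c_{0,i}-u_{i+1}>\alpha$ from Assumption \ref{assump:a} to keep the interaction-time increments $\tau_{1},\tau_{2}$ of size $\mathcal{O}(\mu_{\nu})$ with errors $\mathcal{O}(\mu_{\nu}^{2})$. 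You flag this yourself as ``the main obstacle,'' so the proposal is an accurate plan, not a proof.

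Your first route (identify the limit as the unique solution of the coupled system (\ref{ode_lr}) with right state $(\rho(c(t),t),u(c(t),t))$) is genuinely different from the paper and, if completed, would be stronger -- it would identify the limit explicitly and would not need the nested equidistant structure at all. But it has two unresolved points. First, the consistency claim ``every subsequential limit of $c^{\nu}$ solves this system'' is again the same discrete-to-continuum passage, now in disguise. Second, the limiting ODE is singular at $t=0$: $\xi(0)=0$ and the speed equation divides by $\xi$, so uniqueness of the Carath\'eodory solution from $t=0$ is not automatic and your appeal to the ``explicit initial speed'' does not settle it. The paper avoids this entirely by anchoring the comparison at a time $\tau_{0}>0$ with $c^{\nu}(\tau_0)=c(\tau_0)$ and $\xi^{\nu}(\tau_{0})\geq\gamma_{0}>0$ uniformly in $\nu$, proving the Cauchy property on $t>\tau_{0}$, and then letting $\tau_{0}\to 0$; some such device is needed in your route as well.
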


\begin{proof} Let $\varphi\in C_0^\infty\big(\mathbb{R}\times
\mathbb{R}_+\big)$. There exists $\tau_0$,
  $0<\tau_0<T_{\max}$, independent of $\nu \in \mathbb{N}_0$ 
  such that $\varphi$ is
supported by $t>\tau_0$. Our aim is to prove that $\Gamma^{\nu}\to
\Gamma$ as $\nu\to \infty$ in the strip $0<t<T_{\max}$.
Suppose that $c^{\nu}(\tau_0)=c(\tau_0)$ and $\gamma_0\leq \xi^{\nu}(\tau_0)$
independently of a partition. Without loss of generality, assume that
$t=\tau_0$ is interaction time between 0-SDW and a contact discontinuity (or
a shadow wave). That is, for each $\nu$ there exists some $Y_i^{\nu}\in
\mathcal{P}^{\nu}$ such that $Y_i^{\nu}+u(Y_i^{\nu})\tau_0=c^{\nu}(\tau_0)$ 
(or $Y_i^{\nu}+y_{i,i+1}^{\nu}\tau_0=c^{\nu}(\tau_0)$), 
where $y_{i,i+1}^{\nu}$ from 
(\ref{y_{i,k}}) corresponds to the states $u_i^{\nu}$, $u_{i+1}^{\nu}$ in
$\mathcal{P}^{\nu}$. This may not be true in general, 
but a difference would be negligible. 
The compactness of a test function support permits us to take
a points $Y_J^0>R$ and $\overline{T}>0$ as a boundary of the limit analysis. 

For simplicity, we shall suppose first that all partitions $\mathcal{P}^{\nu}$
are equidistant, i.e.\ $Y_{k+1}^{\nu}-Y_k^{\nu}
=\mu_{\nu}$ for each $k\in\mathbb{N}_0$ and $\nu \in \mathbb{N}_0$.
The proof for partitions with equidistant property differs 
only in technical details. Denote 
\[M=\max\{\rho_0,\sup_{x\geq R}\rho(x)\},\, A=
\max\{u_0,\sup_{x\geq R}u(x)\}-\min\{u_0,\inf_{x\geq R}u(x)\}.\]
The following estimates will be used below.
The Taylor expansion formula implies
\begin{equation*}
\begin{split} \xi(t)&=\begin{cases} \gamma+\big(c[\rho]-[\rho
u]\big)t+\frac{\rho_l\rho_r[u]^2-\big(c[\rho]-[\rho
u]\big)^2}{2\gamma}t^2+\mathcal{O}(t^3), &\rho_l\neq \rho_r\\
\gamma+\rho_l(u_l-u_r)t, & \rho_l=\rho_r\neq 0 \end{cases}\\
u_s(t)&=\begin{cases} c+\frac{\rho_l\rho_r[u]^2-\big(c[\rho]-[\rho
u]\big)^2}{\gamma [\rho]}t +\mathcal{O}(t^2), & \rho_l\neq \rho_r\\
c-\frac{2}{\gamma}\rho_l(u_l-u_r)\big(c-\frac{u_r+u_l}{2}\big)
t+\mathcal{O}(t^2), & \rho_l=\rho_r\neq 0, \end{cases}
\end{split}
\end{equation*} for $t$ small enough. Note that the case $\rho_l=\rho_r=0$
is trivial since $\xi'(t)=0,u_s'(t)=0$. The 0-SDW front curve $x=c(t)$,
$c(0)=X$ is approximated by
\[c(t)=\begin{cases} X+ct +
\frac{\rho_l\rho_r[u]^2-\big(c[\rho]-[\rho u]\big)^2}{2\gamma [\rho]}t^2
+\mathcal{O}(t^3), & \rho_l\neq \rho_r\\ X+ct
-\frac{1}{\gamma}\rho_l(u_l-u_r)
\big(c-\frac{u_r+u_l}{2}\big)t^2+\mathcal{O}(t^3),
& \rho_l=\rho_r\neq 0. \end{cases}\]
Also, 
\[\rho_l\rho_r[u]^2-\big(c[\rho]-[\rho
u]\big)^2=-[\rho]^2(c-y_{l,r})(c-z_{l,r}),\]
where $y_{l,r}$ and $z_{l,r}$
are defined in (\ref{y_{i,k}}). The overcompressibility consequences
are the following estimates
\[0<[\rho](c-z_{l,r})\leq
2\max\{\rho_l,\rho_r\}(u_l-u_r)\leq 2MA,\; |c-y_{l,r}|<(u_l-u_r)\leq A,\]
with constants $A$ and $M$ independent of a partition. 
Finally, we have the global estimates
\begin{equation}\label{est}
\big|\xi(t)-\gamma-\big(c[\rho]-[\rho u]\big)t\big|\leq M C_\gamma t^2,\;
\big|c(t)-X-ct\big|\leq C_\gamma t^2,
\end{equation} and
$\big|u_s(t)-c\big|\leq 2C_\gamma t$, where $
C_\gamma:=\frac{MA^2}{\gamma}$.
\medskip

Now, let $u(x)$ be an increasing function.
Take a partition $\mathcal{P}^0=\{Y_k\}_{k\in \mathbb{N}_{0}}$
and its subpartition $\mathcal{P}^1=\mathcal{P}^0
\cup \{Y_{k+\frac{1}{2}}\}_{k\in \mathbb{N}_{0}}$,
where $Y_{k+\frac{1}{2}}=\frac{Y_k+Y_{k+1}}{2}$, $k \in \mathbb{N}_{0}$.
Denote by $(X_{0,j},T_{0,j})$ the point where 0-SDW supported by
$\Gamma^{0}$ meets the contact discontinuity line
$x=Y_j+u_j t$. Denote by $(X_{1,j},T_{1,j})$ 
the intersection point between $\Gamma^{0}$ and the
second contact discontinuity line $x=Y_j+u_{j+1}t$ from $(Y_{j},0)$. 
The intersection points between $\Gamma^1$ and the
first and the second contact discontinuity that originate from $(Y_j,0)$ are
denoted by $(X^1_{0,j},T^1_{0,j})$ and $(X^1_{1,j},T^1_{1,j})$,
respectively. Note that in that case we also have contact discontinuities
originating from the points $(Y_{k+\frac{1}{2}},0)$. 
That produces the new interaction points $(X^1_{m,k+\frac{1}{2}}$, 
$T^1_{m,k+\frac{1}{2}})$, $m=0,1$. Using the above assumptions, we have
$X_{0,i}=X^{\nu}_{0,i}$, $T_{0,i}=T^{\nu}_{0,i}=\tau_0$ 
for each $\nu\in \mathbb{N}_0$. 
Define
\[\begin{split} \gamma_{k,j}&=\xi(T_{k,j}),\; c_{k,j}=u_s(T_{k,j}),\;
X_{k,j}=\gamma(T_{k,j}), \; k=0,1,\; j=i,i+1,\\
\gamma^1_{k,j}&=\xi(T^1_{k,j}),\; c^1_{k,j}=u_s(T^1_{k,j}), \;
X^1_{k,j}=\gamma (T^1_{k,j}), \; k=0,1,\; j=i,i+\tfrac{1}{2},i+1.
\end{split}\]
From
$X_{1,i}= c(T_{1,i})=Y_i+u_{i+1}T_{1,i}$, $c(T_{0,i})=
X_{0,i}=Y_i+u_iT_{0,i}$,
one easily finds
$T_{1,i}=T_{0,i}+\tau_1+\mathcal{O}(\tau_1^2)$, and 
$\tau_1:=\frac{u_{i+1}-u_i}{c_{0,i}-u_{i+1}}T_{0,i}$.
Assumption \ref{assump:a} implies that there exists an $\alpha>0$ such that
$c_{0,i}-u_{i+1}>\alpha$, $i \in \mathbb{N}_{0}$. Then,
$\mathcal{O}\big(\tau_1^2\big)=\mathcal{O}(\mu_{0}^2)$ since
$u_{i+1}-u_i=\mathcal{O}(\mu_{0})$. Note that $\mu_0=\sqrt[3]{\varepsilon}$. 
The estimate  
\begin{equation*}%\label{est_X_v}
\big|X_{1,i}-(X_{0,i}+c_{0,i}\tau_1)\big|\leq C_{\gamma_0}\tau_1^2<
C_{\gamma_0}\frac{B_u^2 \bar{T}^2\mu_{0}^2}{\alpha^2},
\end{equation*} with
$B_u:=\sup_{x\geq R}|u'(x)|$ follows from (\ref{est}). 
The new interaction point $(X_{0,i+1},T_{0,i+1})$ is 
a solution to the system of equations
\[X_{0,i+1}=c(T_{0,i+1})=Y_{i+1}+u_{i+1}T_{0,i+1},\;
c(T_{1,i})=X_{1,i}.\]
Thus, $T_{0,i+1}=T_{1,i}+\tau_2+\mathcal{O}\big(\mu_{0}^2\big)$, 
where $\tau_2:=
\frac{Y_{i+1}-Y_i}{c_{1,i}-u_{i+1}}=\frac{\mu_{0}}{c_{1,i}-u_{i+1}}$.
Note that $c_{1,i}-u_{i+1}>c_{0,i}-u_{i+1}>\alpha$ due to the fact
that the speed of shadow wave is increasing in vacuum area, and we have
\begin{equation*}%\label{est_X_n}
\big|X_{0,i+1}-(X_{1,i}-c_{1,i}\tau_2)\big|\leq C_{\gamma_0} \tau_2^2<
\frac{C_{\gamma_0}\mu_{0}^2}{\alpha^2}.
\end{equation*}

Let us now consider the partition $\mathcal{P}^1$. Denote 
$\tau^1_1 :=
T_{0,i}\frac{u_{i+\frac{1}{2}}-u_i}{c_{0,i}-u_{i+\frac{1}{2}}}$,
$\tau^1_2 : =\frac{Y_{i+\frac{1}{2}}-Y_{i}}{c^1_{1,i}-u_{i+\frac{1}{2}}}$,
$\tau^1_3:=
T^1_{0,i+\frac{1}{2}}\frac{u_{i+1}-u_{i+\frac{1}{2}}}
{c^1_{0,i+\frac{1}{2}}-u_{i+1}}$,
$\tau^1_4 :=
\frac{Y_{i+1}-Y_{i+\frac{1}{2}}}{c^1_{1,i+\frac{1}{2}}-u_{i+1}}$.
In the same way as for $\mathcal{P}^0$ we have
\[\begin{split} T_{0,i+1}^1
&=T_{0,i}+\tau_1^1+\tau_2^1+\tau_3^1+\tau_4^1
+\mathcal{O}\Big(\frac{\mu_{0}^{2}}{2}\Big),\\
X^1_{0,i+1} &=
\underbrace{X_{0,i}+c_{0,i}\tau^1_1+c^1_{1,i}
\tau^1_2+c^1_{0,i+\frac{1}{2}}\tau^1_3
+c^1_{1,i+\frac{1}{2}}\tau^1_4}_{=:\tilde{X}_{0,i+1}^1}
+\mathcal{O}\Big(\frac{\mu_{0}^{2}}{2}\Big),
\end{split}\]
as well as
\[\begin{split}
|X_{1,i}^1-(X_{0,i}+c_{0,i}\tau_1^1)|,|X_{1,i+\frac{1}{2}}^1
-(X_{0,i+\frac{1}{2}}^1+c_{0,i+\frac{1}{2}}\tau_3^1)|
&<\frac{C_{\gamma_0}}{\alpha^2}B_u^2 \bar{T}^2
\frac{\mu_{0}^{2}}{2}, \\
|X_{0,i+\frac{1}{2}}^1-(X_{1,i}^1+c_{1,i}^1\tau_2^1)|
,|X_{0,i+1}^1-(X_{1,i+\frac{1}{2}}^1+c_{1,i+\frac{1}{2}}^1\tau_4^1)|
&<\frac{C_{\gamma_0}}{\alpha^2}\frac{\mu_{0}^{2}}{2}.
\end{split}\]
There exist positive constants
$C_0$ and $C_1$ such that
\begin{equation}\label{est_tau}
|\tau_1-(\tau_1^1+\tau_3^1)|\leq C_0\frac{\mu_{0}^{2}}{2},\;
|\tau_2-(\tau_2^1+\tau_4^1)|\leq C_1 \frac{\mu_{0}^{2}}{2}.
\end{equation} 
That follows from the estimates
\[\begin{split}
\Big|\frac{1}{c_{0,i}\!-\!u_{i+1}}-\frac{1}{c_{0,i}\!
-\!u_{i+\frac{1}{2}}}\Big|
&<\frac{B_u}{\alpha^2}\frac{\mu_{0}}{2},\\
\Big|\frac{1}{c_{0,i}\!-\!u_{i+1}}-\frac{1}{c_{0,i+\frac{1}{2}}^1\!
-\!u_{i+1}}\Big|& <\frac{C_{\gamma_0}\mu_{0}}{\alpha^2}\big(B_u \bar{T}\!i
+\!1\big). 
\end{split}\]
Thus,
\[|T_{0,i+1}^1-T_{0,i+1}|\leq
(C_0+C_1)\frac{\mu_{0}^{2}}{2}.\]
We have
\begin{equation*}
\begin{split}
\big|X_{0,i+1}-(X_{0,i}\!+\!c_{0,i}\tau_1\!+\!c_{1,i}\tau_2)\big|
&<\frac{C_{\gamma_0}\mu_{0}^{2}}{\alpha^2}\big(B_u^2 \bar{T}^2+1\big),\\
\big|X_{0,i+1}^1-\tilde{X}_{0,i+1}^1\big|&
<2\frac{C_{\gamma_0}}{\alpha^2}\big(B_u^2\bar{T}^2+1\big)
\frac{\mu_{0}^{2}}{2},\\
\big|\tilde{X}_{0,i+1}^1-(X_{0,i}\!+\!c_{0,i}\tau_1\!
+\!c_{1,i}\tau_2)\big|&<C_2\frac{\mu_{0}^{2}}{2}
\end{split}
\end{equation*} 
from (\ref{est_tau}) and the estimates
\[|c_{1,i}-c_{0,i}|<\frac{2C_{\gamma_0}B_u\mu_{0}}{\alpha},
\;|c_{1,i}^1-c_{0,i}|<\frac{2C_{\gamma_0}B_u}{\alpha}\frac{\mu_{0}}{2},
\;|c_{0,i+\frac{1}{2}}^1-c_{1,i}^1|
<\frac{2C_{\gamma_0}}{\alpha}\frac{\mu_{0}}{2}.\]
That proves the existence of the constant $\tilde{C}>0$ such that
\[|X_{0,i+1}^1-X_{0,i+1}|<\tilde{C} \frac{\mu_{0}^{2}}{2}.\]

By repeating the process with each partition $\mathcal{P}^{\nu}$ and
its subpartition $\mathcal{P}^{\nu+1}$, $\nu=1,2,\ldots$, 
we obtain the same estimates with $T_{0,i+1}$,
$T_{0,i+1}^1$ and $\mu_{0}$ substituted by 
$T_{0,i+1}^{\nu}$, $T_{0,i+1}^{\nu+1}$ and
$\mu_\nu$, respectively. Let $T_{0,J}^{\nu}\leq \bar{T}$ be the time
of interaction of $\Gamma^{\nu}$ and the contact discontinuity line 
$x=Y_J+u(Y_J)t$. For each $\nu$
and the partition $\mathcal{P}^{\nu}$ there are at most
$2(Y_J-Y_i)/\mu_{\nu}$ interactions on the compact set. So, we have
\begin{equation*}
\begin{split} \big|T_{0,J}^{\nu}-T_{0,J}^{\nu+1}\big|& \leq
  2(C_0+C_1)(Y_J-Y_i)\frac{\mu_{\nu}}{2}=:C_T\frac{\mu_{\nu}}{2},\\
\big|X_{0,J}^{\nu}-X_{0,J}^{\nu+1}\big|&\leq
2\tilde{C}(Y_J-Y_i)\frac{\mu_{\nu}}{2}=:C_X\frac{\mu_{\nu}}{2}.
\end{split}
\end{equation*} Finally, since $C_X,C_T$ do not depend on partition we
conclude that a distance between the curves $\Gamma^p$ and
$\Gamma^{m+p}$ on $\big(\mathbb{R}\times\mathbb{R}_+\big)\cap
\mathop{\rm supp} \varphi$  can be estimated by
\[\big|X_{0,J}^{m+p}-X^{p}_{0,J}\big|\leq
C_X\sum_{i=p+1}^{m+p}\frac{\mu_{0}}{2^{i}}\leq C_X\frac{\mu_{0}}{2^{p}}
=C_X \mu_{p}, \; 
\big|T_{0,J}^{m+p}-T^{p}_{0,J}\big|\leq C_T\frac{\mu_{0}}{2^{p}}.\]
Thus, $\{\Gamma^{\nu}\}_{\nu\in \mathbb{N}_{0}}$ 
forms a Cauchy sequence, and it converges for
each $t>\tau_0$. To prove the assertion for $t>0$ it is enough to
take $\tau_0$ small enough.
One can prove the assertion in the same way when the function $u(x)$ is
decreasing and $u_0>u(R)$ for $t<T_{\max}$, i.e.\ as long as
characteristics do not intersect below the curve $x=c(t)$. Take the
partition $\mathcal{P}^0$ with $Y_k-Y_{k-1}=\mu_{0}$. 
Suppose that
$\Gamma^0$ meets a shadow wave with a front $x=Y_k+y_{k,k+1}t$ at a
point $(X_k,T_k)$. Assume $X_i=X_i^{\nu}$, $T_i=T_i^{\nu}=\tau_0$ for each
$\nu$. The next interaction point $(X_{i+1},T_{i+1})$ is determined by
\[c(t)=Y_{i+1}+y_{i+1,i+2}t,\; c(T_{i})=X_i,\; u_s(T_i)=c_i.\]

There exists an $\alpha>0$ such that
$c_i-y_{i+1,i+2}>\alpha$
due to Assumption \ref{assump:a}.
Thus, 0-SDW and $\text{SDW}_{i+1,i+2}$ interact at
$t=T_{i+1}$,
\[T_{i+1}=T_i+\tau_i+\mathcal{O}\Big(\frac{\mu_{0}^{2}}{2}\Big),\;
\tau_i:=\frac{1+u'(Y_{i+1})T_i}{c_i-y_{i+1,i+2}}\mu_{0}.\]
That follows from the estimates
$y_{i+1,i+2}-y_{i+1,i}=\frac{1}{2}(u_{i+2}-u_i)
+\mathcal{O}(\mu_{0}^{2})$ and
$u_{i+2}-u_i=2u'(Y_{i+1})\mu_{0}+\mathcal{O}(\mu_{0}^{2})$.
Then
\[
\begin{split}
\big|X_{i+1}-(X_i+c_i\tau_i)\big|&<C_{\gamma_0}\tau_i^2
<C_{\gamma_0}\frac{(1\!+\!B_uT_{\max})^2\mu_{0}^{2}}{\alpha^2},\\
\big|c_{i+1}-c_i\big|&<2C_{\gamma_0}\frac{1\!+\!B_uT_{\max}}{\alpha}\mu_{0}.
\end{split}\]

Take now the subpartition $\mathcal{P}^1$ with
$Y_{i+1}-Y_{i+\frac{1}{2}}=Y_{i+\frac{1}{2}}-Y_{i}=\frac{\mu_{0}}{2}=\mu_{1}$
for each $i \in \mathbb{N}_0$. The
interaction points are denoted by $(X_j^1,T_j^1),j=i,i+\frac{1}{2},i+1$.
Similarly, as in the case of increasing $u(x),$ there exist constants
$D_0,D_1>0$ such that
\[|T^1_{i+1}-T_{i+1}|\leq
D_0\frac{\mu_{0}^{2}}{2},\; |X_{i+1}^1-X_{i+1}|\leq D_1
\frac{\mu_{0}^{2}}{2}.\]

Analogous relations with $\mu_{0}$ replaced by
$\mu_{\nu}$ hold for the partitions $\mathcal{P}^{\nu}$
and $\mathcal{P}^{\nu+1}$.
Let $T_J^{\nu}<T_{\max}$ denotes the last intersection time between
$\Gamma^{\nu}$ and shadow wave in the domain $\mathop{\rm supp}\varphi$.
The error accumulates with each interaction and gives
\[\begin{split}
|X^{\nu}_{J}-X^{\nu+1}_{J}| \leq (Y_J-Y_i)D_1\frac{\mu_{\nu}}{2},\;
|T^{\nu}_{J}-T^{\nu+1}_{J}| \leq (Y_J-Y_i) D_0\frac{\mu_{\nu}}{2}.
\end{split}\]
Hence, one concludes that $\Gamma^{\nu}\to \Gamma$ as $\nu\to \infty$
in the strip $t<T_{\max}$. As $\tau_0$ decreases the first point of curve
$\Gamma$ tends to $(R,0)$. 

In the general case of partition with the equidistant property 
it is easy to prove that the maximum number of
interactions between $\Gamma^0$ and shadow waves equals 
$\frac{2(Y_J-Y_i)}{\sqrt[3]{\varepsilon}}=:\frac{E}{\sqrt[3]{\varepsilon}}$
following the
above procedure for equidistant case. Since the sequence of partitions
$\{\mathcal{P}^{\nu}\}_{\nu\in \mathbb{N}_{0}}$ 
is formed in such a way that each subinterval
$[Y_k^{\nu},Y_{k+1}^{\nu}]$ is divided into two,
not necessary equal parts such that
\[\min\big\{Y_{k+\frac{1}{2}}^{\nu}-Y_{k}^{\nu},Y_{k+1}^{\nu}
-Y_{k+\frac{1}{2}}^{\nu}\big\}\geq
\frac{\mu_{\nu}}{2C},\;
\max\big\{Y_{k+\frac{1}{2}}^{\nu}-Y_{k}^{\nu},Y_{k+1}^{\nu}
-Y_{k+\frac{1}{2}}^{\nu}\big\}\leq
\frac{\mu_{\nu}}{2},\]
one concludes that the number of collisions between
$\Gamma^{\nu}$ and shadow waves is at most $\frac{EC}{\mu_{\nu}}$. Thus, the
above proof holds for general case, with $Y_J-Y_i$ replaced by
$C(Y_J-Y_i)$.
\end{proof}

\end{document}